\title{Aspects of Predicative Algebraic Set Theory III: Sheaves}
\author{Benno van den Berg\footnote{Mathematisch Instituut, Universiteit Utrecht,
PO. Box 80010, 3508 TA Utrecht, the Netherlands. Email address: B.vandenBerg1@uu.nl. Supported by Netherlands Organisation for Scientific Research (NWO project ``The Model Theory of Constructive Proofs'').} \, \& Ieke Moerdijk\footnote{Corresponding author. Institute for Mathematics, Astrophysics and Particle Physics, Radboud University, Heyendaalseweg 135, 6525 AJ Nijmegen, the Netherlands. Email address: i.moerdijk@math.ru.nl.}}
\date{September 29, 2011}
\begin{document}

\maketitle

\begin{abstract}
\noindent
This is the third installment in a series of papers on algebraic set theory. In it, we develop a uniform approach to sheaf models of constructive set theories based on ideas from categorical logic. The key notion is that of a ``predicative category with small maps'' which axiomatises the idea of a category of classes and class morphisms, together with a selected class of maps whose fibres are sets (in some axiomatic set theory). The main result of the present paper is that such predicative categories with small maps are stable under internal sheaves. We discuss the sheaf models of constructive set theory this leads to, as well as ideas for future work.\footnote{MCS: 18F20; 03F50; 03E70.}
\end{abstract}

\section{Introduction}

This is the third in a series of papers on algebraic set theory, the aim of which is to develop a categorical semantics for constructive set theories, including predicative ones, based on the notion of a ``predicative category with small maps''.\footnote{Accessible and well-written introductions to algebraic set theory are \cite{awodey08, awodeyetal07, simpson05}.} In the first paper in this series \cite{bergmoerdijk08} we discussed how these predicative categories with small maps provide a sound and complete semantics for constructive set theory. In the second one \cite{bergmoerdijk07c}, we explained how realizability extensions of such predicative categories with small maps can be constructed. The purpose of the present paper is to do the same for sheaf-theoretic extensions. This program was summarised in \cite{bergmoerdijk07a}, where we announced the results that we will present and prove here.

For the convenience of the reader, and also to allow a comparison with the work by other researchers, we outline the main features of our approach. As said, the central concept in our theory is that of a predicative category with small maps. It axiomatises the idea of a category whose objects are classes and whose morphisms are functions between classes, and which is moreover equipped with a designated class of maps. The maps in the designated class are called small, and the intuitive idea is that the fibres of these maps are sets (in a certain axiomatic set theory). Such categories are in many ways like toposes, and to a large extent the purpose of our series of papers is to develop a topos theory for these categories. Indeed, like toposes, predicative categories with small maps turn out to be closed under realizability and sheaves. 

On the other hand, where toposes can be seen as models of a typed version of (constructive) higher-order arithmetic, predicative categories with small maps provide models of (constructive) \emph{set theories}. Furthermore, the notion of a predicative category with small maps is proof-theoretically rather weak: this allows us to model set theories which are proof-theoretically weaker than higher-order arithmetic, such as Aczel's set theory {\bf CZF} (see \cite{aczel78}). But at the same time, the notion of a predicative category with small maps can also be strengthened, so
that it leads to models of set theories proof-theoretically stronger than higher-order arithmetic, like {\bf IZF}. The reason for this is that one can impose additional axioms on the  class of small maps. This added flexibility is an important feature of algebraic set theory.

A central result in algebraic set theory says that the semantics
provided by predicative categories with small maps is complete. More precisely, every predicative category with small maps contains an object (``the initial ZF-algebra'' in the terminology of \cite{joyalmoerdijk95}, or ``the initial \spower-algebra'' in the terminology of \cite{bergmoerdijk08}\footnote{Appendix A in \cite{joyalmoerdijk95} contains a proof of the fact that both these terms refer to the same object. In the sequel we will use these terms interchangably.}) which carries the structure of a model of set theory. Which set-theoretic axioms hold in this model depends on the properties of the class of small maps and on the logic of the underlying category: in different situations, this initial ZF-algebra can be a model  of {\bf CZF}, of {\bf IZF}, or of ordinary {\bf ZF}. (The axioms of the constructive set theores {\bf CZF} and {\bf IZF} are recalled in Section 2 below.) The completeness referred to above results from the fact that from the syntax of {\bf CZF} (or {\bf (I)ZF} ), we can build a predicative category with small maps with the property that in the initial ZF-algebra in this category, precisely those sentences are valid which are derivable from the axiom of {\bf CZF} (see \cite{bergmoerdijk08}). (Completeness theorems of this kind go back to \cite{simpson99, awodeyetal07}. One should also mention that one can obtain a predicative category with small maps from the syntax of Martin-L\"of type theory: Aczel's interpretation of {\bf CZF} in Martin-L\"of type theory goes precisely via the initial ZF-algebra in this category. In fact, our proof of the existence of the initial ZF-algebra in any predicative category with small maps in \cite{bergmoerdijk08} was modelled on Aczel's interpretation, as it was in \cite{moerdijkpalmgren02}.)

In algebraic set theory we approach the construction of  realizability categories and of categories of sheaves in a
topos-theoretic spirit; that is, we regard these realizability and sheaf constructions as \emph{closure properties} of predicative categories with small maps. For realizability this means that starting from any predicative category with small maps $(\ct{E}, \smallmap{S})$ one can build a predicative realizability category with small maps $(\Eff_\ct{E}, \Eff_\smallmap{S})$ over it. Inside both of these categories, we have models of constructive set theory ({\bf CZF} say), as shown in the following picture. Here, the vertical arrows are two instances of the same construction of the initial ZF-algebra, applied to different predicative categories with small maps:
\diag{ (\ct{E}, \smallmap{S}) \ar[d] \ar[r] & (\Eff_\ct{E},
\Eff_\smallmap{S}) \ar[d] \\
\mbox{model of {\bf CZF}} \ar[r] & \mbox{realizability model of {\bf CZF}} }

Traditional treatments of realizability either regard it as a
model-theoretic construction (which would correspond to the lower edge of the diagram), or as a proof-theoretic interpretation (defining a realizability model of {\bf CZF} inside {\bf CZF}, as in \cite{rathjen06}, for instance): the latter would correspond to the left-hand vertical arrow in the special case where  $\ct{E}$ is the syntactic category associated to {\bf CZF}. So in a way our treatment captures both constructions in a uniform way. 

That realizability is indeed a closure property of predicative categories with small maps was the principal result of \cite{bergmoerdijk08}. The main result of the present paper is that the same is true for sheaves, leading to an analogous diagram:
\diag{ (\ct{E}, \smallmap{S}) \ar[d] \ar[r] & ({\rm Sh}_\ct{E},
{\rm Sh}_\smallmap{S}) \ar[d] \\
\mbox{model of {\bf CZF}} \ar[r] & \mbox{sheaf model of {\bf CZF}} }
The main technical difficulty in showing that predicative categories with small maps are closed under sheaves lies in showing that the axioms concerning inductive types (W-types) and an axiom called ``fullness'' (needed to model the subset collection axiom of {\bf CZF}) are inherited by sheaf models. The proofs of these facts are quite long and involved, and take up a large part of this paper (the situation for realizability was very similar). 

To summarise, in our approach there is one uniform construction of a model out of a predicative category with small maps $(\ct{E}, \smallmap{S})$, which one can apply to different kinds of such categories, constructed using syntax, using realizability, using sheaves, or any iteration or combination of these techniques.

We proceed to compare our results with those of other authors. Early work on categorical semantics of set theory (for example, \cite{freyd80} and \cite{fourman80}) was concerned with sheaf and realizability toposes defined over $\Sets$. The same applies to the book which introduced algebraic set theory \cite{joyalmoerdijk95}. In particular, to the best of our knowledge, before our work a systematic account was lacking of iterations and combinations of realizability and sheaf interpretations. In addition these earlier papers were concerned exclusively with impredicative set theories, such as {\bf ZF} or {\bf IZF}: the only exception seems to have been an early paper \cite{grayson83} by Grayson, treating models of predicative set theory in the context of what would now be called formal topology.

The first paper extending the methods of algebraic set theory to
predicative systems was \cite{moerdijkpalmgren02}. The authors of this paper showed how categorical models of Martin-L\"of type theory (with universes) lead to models of {\bf CZF} extended with a choice principle, which they dubbed the Axiom of Multiple Choice {\bf (AMC)}. They established how such categorical models of type theory are closed under sheaves, hence leading to sheaf models of a strengthening of {\bf CZF}. They did not develop a semantics for {\bf CZF} \emph{per se} and relied on a technical notion of a collection site, which we manage to avoid here (moreover, there was a mistake in their treatment 
of W-types of sheaves; we correct this in Section 4.4 below, see also \cite{bergmoerdijk08b}). 

Two accounts of presheaf models in the context of algebraic set theory have been written by Gambino \cite{gambino05} and Warren \cite{warren07}. In \cite{gambino05} Gambino shows how an earlier (unpublished) construction of a model of constructive set theory by Dana Scott can be regarded as an initial ZF-algebra in a category of presheaves, and that one can perform the construction in a predicative metatheory as well. Warren shows in \cite{warren07} that many of the axioms that we will discuss  are inherited by categories of coalgebras for a Cartesian comonad, a construction which includes presheaf models as
a special case. But note that neither of these authors discusses the technically complicated axioms concerning W-types and fullness, as we will do in Sections 3 and 4 below. 

In his PhD thesis \cite{gambino02}, Gambino gave a systematic
account of Heyting-valued models for {\bf CZF} (see also
\cite{gambino06}). This work was in the context of formal topology (essentially, sites whose underlying categories are posets). He has subsequently worked on generalising this to arbitrary sites and on putting this in the context of algebraic set theory. In \cite{gambino08}, he took the first step in constructing the sheafification functor and in  \cite{awodeygambinolumsdainewarren09}, written together with Awodey, Lumsdaine and Warren, he checks that the basic axioms for small maps are inherited by categories of sheaves in the general setting of sheaves for a Lawvere-Tierney topology. We will extend these results by proving that for sites which have a presentation (for a definition, see \refdefi{site} below), the axioms for W-types and for fullness are stable under taking sheaf extensions. Note that for proof-theoretic reasons, fullness cannot be stable under taking more general kinds of sheaves such as those for a site which does not have a presentation, or for a Lawvere-Tierney topology. The point is that {\bf CZF} extended with the Law of Excluded Middle gives {\bf ZF}, a much stronger system proof-theoretically, and  therefore a double-negation interpretation of {\bf CZF} in itself must fail. The culprit turns out to be the fullness axiom, which can therefore not be stable under taking sheaves for the double-negation topology or sheaves for an arbitrary site (see \cite{gambino06} and \cite{grayson83}). 

We conclude this introduction by outlining the organisation of our paper. In Section 2 we recall the main definitions from \cite{bergmoerdijk07a,bergmoerdijk08}. We will introduce the axioms for a class of small maps necessary to obtain models of {\bf CZF} and {\bf IZF}. Among these necessary axioms, we will discuss the fullness axiom, the axioms concerning W-types and the axiom of multiple choice in detail, as these are the most complicated technically and our main results, which we formulate precisely in Section 2.5, are concerned with these axioms.

In Section 3 we show that predicative categories with small maps are closed under presheaves and that all the axioms that we have listed in Section 2 are inherited by such presheaf models. An important part of our treatment is that we distinguish between two classes of small maps: the ``pointwise'' and ``locally'' small ones. It turns out that for certain axioms it is easier to show that they are inherited by pointwise small maps while for other axioms it is easier to show that they are inherited by locally small maps, and therefore it is an important result that these classes of maps coincide.

We follow a similar strategy in Section 4, where we discuss sheaves: we again distinguish between two classes of maps, where for some axioms it is easier to use one definition, while for other axioms it turns out to be easier to use the other. To show that these two classes coincide we use the fullness axiom and assume that the site has a presentation.\footnote{In \cite{bergmoerdijk07a} we claimed that (instead of fullness) the
exponentiation axiom would suffice to establish this result, but that might not be correct.} This section also contains our main technical results: that sheaf models inherit the fullness axiom, as well as the axioms concerning W-types.\footnote{One subtlety arises when we try to show that an axiom saying that certain inductives types are small (axiom {\bf (WS)} to be precise) is inherited by sheaf models: we show this using the axiom of multiple choice. In fact, we suspect that something of this sort is unavoidable and one has to go beyond {\bf CZF} proper to show that its validity is inherited by sheaf models.} Strictly speaking our results for presheaves in Section 3 are special
cases of our results in Section 4. We believe, however, that it is useful to give direct proofs of the results for presheaves,  and in many cases it is helpful to see how the proof goes in the (easier) presheaf case before embarking on the more involved proofs in the sheaf case.

Finally, in Section 5 we give explicit descriptions of the sheaf models of constructive set theory our results lead to. We also point out the connection to forcing for classical set theories. 

This will complete our program for developing an abstract semantics of constructive set theory, in particular of Aczel's {\bf CZF}, as outlined \cite{bergmoerdijk07a}. As a result topos-theoretic insights and categorical methods can now be used in the study of constructive set theories. For instance, one can obtain consistency and independence results using sheaf and realizability models or by a combination of these interpretations. In future work, we will use sheaf-theoretic methods to show that the fan rule as well as certain continuity rules are derived rules for {\bf CZF} and related theories \cite{bergmoerdijk11}.

The main results of this paper were presented by the second author in a tutorial on categorical logic at the Logic Colloquium 2006 in Nijmegen. We are grateful to the organisers of the Logic Colloquium for giving one of the authors this opportunity. The final draft of this paper was completed during a stay of the first author at the Mittag-Leffler Institute in Stockholm. We would like to thank the Institute and the organisers of the program in Mathematical Logic in Fall 2009 for awarding him a grant which enabled him to complete this paper in such excellent working conditions. In addition, we would like to acknowledge the helpful discussions we had with Steve Awodey, Nicola Gambino, Jaap van Oosten, Erik Palmgren, Thomas Streicher, Michael Warren, and especially Peter LeFanu Lumsdaine (see \refrema{errorinMP} below). 

\section{Preliminaries}

\subsection{Review of Algebraic Set Theory}

In this section we recall the main features of our approach to Algebraic Set Theory from \cite{bergmoerdijk07a,bergmoerdijk08}.

We will always assume that our ambient category \ct{E} is a \emph{positive Heyting category}. That means that \ct{E} is
\begin{enumerate}
\item[(i)] Cartesian, i.e., it has finite limits.
\item[(ii)] regular, i.e., morphisms factor in a stable fashion as a cover followed by a monomorphism.\footnote{Recall that a map $f: B \to A$ is a cover, if the only subobject of $A$ through which it factors, is the maximal one; and that $f$ is a regular epimorphism if it is the coequalizer of its kernel pair. These two classes coincide in regular categories (see \cite[Proposition A1.3.4]{johnstone02a}).}
\item[(iii)] positive, i.e., it has finite sums, which are disjoint and stable.
\item[(iv)] Heyting, i.e., for any morphism \func{f}{Y}{X} the induced pullback functor \func{f^*}{{\rm Sub}(X)}{{\rm Sub}(Y)} has a right adjoint $\forall_f$.
\end{enumerate}
This means that \ct{E} is rich enough to interpret first-order intuitionistic logic. Such a category \ct{E} will be called a \emph{category with small maps}, if it comes equipped with a class of maps \smallmap{S} satisfying a list of axioms. To formulate these, we use the notion of a covering square. 

\begin{defi}{coveringsquare}
A diagram in \ct{E} of the form
\diag{ D \ar[d]_f \ar[r] &  C \ar[d]^g \\
B \ar[r]_p & A}
is called a \emph{quasi-pullback}, when the canonical map $D \rTo B \times_A C$ is a cover. If $p$ is also a cover, the diagram will be called a \emph{covering square}. When $f$ and $g$ fit into a covering square as shown, we say that $f$ \emph{covers} $g$, or that $g$ \emph{is covered by} $f$.
\end{defi}

\begin{defi}{smallmaps}
A class of maps in \ct{E} satisfying the following axioms {\bf (A1-9)} will be called a \emph{class of small maps}:
\begin{description}
\item[(A1)] (Pullback stability) In any pullback square
\diag{ D \ar[d]_g \ar[r] & B \ar[d]^f \\
C \ar[r]_p & A }
where $f \in \smallmap{S}$, also $g \in \smallmap{S}$.
\item[(A2)] (Descent) If in a pullback square as above $p$ is a cover and $g \in \smallmap{S}$, then also $f \in \smallmap{S}$.
\item[(A3)] (Sums) Whenever $X \rTo Y$ and $X'\rTo Y'$ belong to \smallmap{S}, so does $X + X' \rTo Y + Y'$.
\item[(A4)] (Finiteness) The maps $0 \rTo 1, 1 \rTo 1$ and $1+1 \rTo 1$ belong to \smallmap{S}.
\item[(A5)] (Composition) $\smallmap{S}$ is closed under composition.
\item[(A6)] (Quotients) In a commuting triangle
\diag{ Z \ar[dr]_h \ar@{->>}[rr]^f & & Y \ar[dl]^g \\
& X, &  }
if $f$ is a cover and $h$ belongs to \smallmap{S}, then so does $g$.
\item[(A7)] (Collection) Any two arrows \func{p}{Y}{X} and \func{f}{X}{A} where $p$ is a cover and $f$ belongs to \smallmap{S} fit into a covering square
\diag{ Z \ar[d]_g \ar[r] & Y \ar@{->>}[r]^p & X \ar[d]^f \\
B \ar@{->>}[rr]_h & & A,}
where $g$ belongs to \smallmap{S}.
\item[(A8)] (Heyting) For any morphism \func{f}{Y}{X} belonging to \smallmap{S}, the right adjoint to pullback
\[ \func{\forall_f}{{\rm Sub}(Y)}{{\rm Sub}(X)} \]
sends small monos to small monos.
\item[(A9)] (Diagonals) All diagonals \func{\Delta_X}{X}{X \times X} belong to \smallmap{S}.
\end{description}
For further discussion of these axioms we refer to \cite{bergmoerdijk08}.

A pair $(\ct{E}, \smallmap{S})$ in which \smallmap{S} is a class of small maps in \ct{E} will be called a \emph{category with small maps}. In such categories with small maps, objects $A$ will be called \emph{small}, if the unique map from $A$ to the terminal object is small. A subobject $A \subseteq X$ will be called a \emph{small subobject} if $A$ is a small object. If any of its representing monomorphisms $m: A \to X$ is small, they all are and in this case the subobject will be called \emph{bounded}.
\end{defi}

\begin{rema}{stabunderslicing}
In the sequel we will often implicitly use that categories with small maps are stable under slicing. By this we mean that for any category with small maps $(\ct{E}, \smallmap{S})$ and object $X$ in \ct{E}, the pair $(\ct{E}/X, \smallmap{S}/X)$, with $\smallmap{S}/X$ being defined by
\[ f \in \smallmap{S}/X \Leftrightarrow \Sigma_X f \in \smallmap{S}, \] 
is again a category with small maps (here $\Sigma_X$ is the forgetful functor $\ct{E}/X \to \ct{E}$ sending an object $p: A \to X$ in $\ct{E}/X$ to $A$ and morphisms to themselves). Moreover, any of the further axioms for classes of small maps to be introduced below are stable under slicing, in the sense that their validity in the slice over 1 implies their validity in every slice.
\end{rema}

\begin{rema}{bounded separation}
A very useful feature of categories of small maps, and one we will frequently exploit, is that they satisfy an internal form of bounded separation. A precise statement is the following: if $\phi(x)$ is a formula in the internal logic of \ct{E} with free variable $x \in X$, all whose basic predicates are interpreted as bounded subobjects (note that this includes all equalities, by {\bf (A9)}), and which contains existential and universal quantifications $\exists_f$ and $\forall_f$ along small maps $f$ only, then
\[ A = \{ x \in X \, : \, \phi(x) \} \subseteq X \]
defines a bounded subobject of $X$. In particular, smallness of $X$ implies smallness of $A$. 
\end{rema}

\begin{defi}{predcatwsmallmaps}
A category with small maps $(\ct{E}, \smallmap{S})$ will be called a \emph{predicative category with small maps}, if the following axioms hold: 
\begin{description}
\item[($\Pi$E)] All morphisms $f \in \smallmap{S}$ are exponentiable.
\item[(WE)] For all $\func{f}{X}{Y} \in \smallmap{S}$, the W-type $W_f$ associated to $f$ exists.
\item[(NE)] \ct{E} has a natural numbers object $\NN$.
\item[(NS)] Moreover, $\NN \rTo 1 \in \smallmap{S}$.
\item[(Representability)] There is a small map \func{\pi}{E}{U} (the ``universal small map'') such that any $\func{f}{Y}{X} \in \smallmap{S}$ fits into a diagram of the form
\diag{Y \ar[d]_f & B \ar[d] \ar[r] \ar@{->>}[l] & E \ar[d]^{\pi} \\
X & A \ar[r] \ar@{->>}[l] & U, }
where the left hand square is covering and the right hand square is a pullback.
\item[(Bounded exactness)] For any equivalence relation \diag{R \ar@{ >->}[r] & X \times X} given by a small mono, a stable quotient $X/R$ exists in \ct{E}.
\end{description}
(For a detailed discussion of these axioms we refer again to \cite{bergmoerdijk08};  W-types and the axiom {\bf (WE)} will also be discussed in Section 2.3 below.)
\end{defi}

In predicative categories with small maps one can derive the existence of a power class functor, classifying small subobjects:
\begin{defi}{powerobj}
By a \emph{$D$-indexed family of subobjects} of $C$, we mean a subobject $R \subseteq C \times D$. It will be called a \emph{$D$-indexed family of small subobjects}, whenever the composite
\[ R \subseteq C \times D \rTo D \]
belongs to \smallmap{S}. If it exists, the \emph{power class object} $\spower X$ is the classifying object for the families of small subobjects of $X$. This means that it comes equipped with a $\spower X$-indexed family of small subobjects of $X$, denoted by $\in_X \subseteq X \times \spower X$ (or simply $\in$, whenever $X$ is understood), with the property that for any $Y$-indexed family of small subobjects of $X$, $R \subseteq X \times Y$ say, there exists a unique map \func{\rho}{Y}{\spower X} such that the square
\diag{ R \ar@{ >->}[d] \ar[r] & \in_X \ar@{ >->}[d] \\
X \times Y \ar[r]_{\id \times \rho} & X \times \spower X}
is a pullback.
\end{defi}
\begin{prop}{existenceofpowerobj} {\rm \cite[Corollary 6.11]{bergmoerdijk08}}
In a predicative category with small maps all power class objects exist.
\end{prop}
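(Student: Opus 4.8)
The plan is to construct $\spower X$ explicitly out of the universal small map and the $\Pi$-types of small maps, and then to use internal bounded separation together with bounded exactness to carve out the right object; the whole difficulty is that $X$ is allowed to be an arbitrary (not necessarily small) object. Let $\pi\colon E\to U$ be the universal small map provided by the Representability axiom. Since $\pi\in\smallmap{S}$ it is exponentiable by \textbf{($\Pi$E)}, so there is an object $V\to U$ with $V=\Pi_\pi(\pi^{*}X)$; internally, $V$ is the object of pairs $(u,f)$ with $u\in U$ and $f\colon E_u\to X$ a morphism out of the fibre $E_u=\pi^{-1}(u)$. Each such pair determines a small subobject of $X$, namely $\mathrm{im}(f)$ (small by \textbf{(A6)}, as $E_u$ is small and covers it), and conversely Representability shows that, after patching along covers, every small subobject of $X$ arises this way. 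So $\spower X$ ought to be the quotient of $V$ identifying $(u,f)$ with $(u',f')$ precisely when $\mathrm{im}(f)=\mathrm{im}(f')$.

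The step I expect to be the crux is to show that this equivalence relation $R\subseteq V\times V$ is a \emph{small} (equivalently, bounded) mono, so that bounded exactness applies. The obvious description ``$\forall x\in X\,(x\in\mathrm{im} f\leftrightarrow x\in\mathrm{im} f')$'' quantifies over $X$, which need not be small; the point is instead to rewrite ``$\mathrm{im}(f)\subseteq\mathrm{im}(f')$'' as ``$\forall e\in E_u\,\exists e'\in E_{u'}\,(f'(e')=f(e))$''. Now both quantifiers range along the small map $\pi$, and the only atomic predicate is the diagonal of $X$, which is bounded by \textbf{(A9)}, so by the internal form of bounded separation valid in any category with small maps, $R$ is a bounded subobject of $V\times V$. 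It is visibly an equivalence relation, so by the Bounded exactness axiom the stable quotient $q\colon V\twoheadrightarrow V/R$ exists; set $\spower X:=V/R$.

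It remains to produce the generic family and to verify the universal property. Let $\widetilde{\in}:=\{(x,u,f)\in X\times V:\exists e\in E_u\,(f(e)=x)\}$, which is a bounded subobject by the same trick; it is $R$-saturated in the $V$-coordinate, hence descends to a subobject $\in_X\subseteq X\times\spower X$ with $(\mathrm{id}_X\times q)^{-1}(\in_X)=\widetilde{\in}$. That $\in_X\to\spower X$ is small follows by descent: its pullback along the cover $q$ is the projection $\widetilde{\in}\to V$, and this is covered by the evaluation map $E\times_U V\to X\times V$ whose image is exactly $\widetilde{\in}$; since $E\times_U V\to V$ is a pullback of $\pi$ and hence small, \textbf{(A6)} shows $\widetilde{\in}\to V$ is small, and then \textbf{(A2)} shows $\in_X\to\spower X$ is small. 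Finally, given a $Y$-indexed family of small subobjects $S\subseteq X\times Y$ (so $S\to Y$ is small), apply Representability to $S\to Y$ to obtain a cover $A\twoheadrightarrow Y$ together with, via the exponential adjunction defining $V$, a map $\sigma\colon A\to V$ whose associated image over $a\in A$ is the fibre $S_y$, where $y$ is the image of $a$. Hence $q\sigma\colon A\to\spower X$ is constant on the fibres of $A\twoheadrightarrow Y$, and, that map being a regular epimorphism, it factors through a unique $\rho\colon Y\to\spower X$. Unwinding the definitions of $\in_X$ and $\widetilde{\in}$ gives $S=(\mathrm{id}_X\times\rho)^{*}(\in_X)$, and $\rho$ is the only map with this property because, by construction of $R$, two points of $\spower X$ with the same image in $X$ coincide.
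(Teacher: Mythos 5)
Your construction is correct and is essentially the same as the one given for the cited Corollary 6.11 in the earlier Berg--Moerdijk paper: start with the object $V$ of ``names'' $\Pi_\pi(\pi^*X)$ built from the universal small map, quotient by the extensional bisimulation relation expressed with quantifiers only along $\pi$ (so that bounded separation and then bounded exactness apply), and descend the evaluation fan to the membership relation. The one point worth stating a bit more explicitly is the step where $q\sigma$ factors through $Y$: you should say that $q\sigma$ coequalizes the kernel pair of the cover $A\twoheadrightarrow Y$, which it does because for any $(a,a')$ in that kernel pair the maps $B_a\to S_{q(a)}$ and $B_{a'}\to S_{q(a')}$ are both covers onto the \emph{same} subobject $S_y\subseteq X$, so $\sigma(a)\mathrel{R}\sigma(a')$; then the universal property of the coequalizer gives the unique $\rho$. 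Likewise, the closing uniqueness remark is exactly extensionality of the quotient by $R$, which is worth phrasing as: two parallel maps $Y\to\spower X$ that classify the same family of small subobjects of $X$ are forced to agree after pulling back along any cover of $Y$ by $\pi_!$-type objects, hence agree. With these small clarifications the argument is complete.
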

Moreover, one can show that the assignment $X \mapsto \spower X$ is functorial and that this functor has an initial algebra.
\begin{theo}{existenceofinitZFalgebra} {\rm \cite[Theorem 7.4]{bergmoerdijk08}}
In a predicative category with small maps the $\spower$-functor has an initial algebra.
\end{theo}
The importance of this result resides in the fact that this initial algebra can be used to model a weak intuitionistic set theory: if $V$ is the initial algebra and $E: V \to \spower V$ is the inverse of the $\spower$-algebra map on $V$ (which is an isomorphism, since $V$ is an initial algebra), then one can define a binary predicate $\epsilon$ on $V$ by setting
\[ x \epsilon y \Leftrightarrow x \in_V E(y), \]
where $\in_V \subseteq V \times \spower V$ derives from the power class structure on $\spower V$. The resulting structure $(V, \epsilon)$ models a weak intuitionistic set theory, which we have called {\bf RST} (for rudimentary set theory), consisting of the following axioms:
\begin{description}
\item[Extensionality:] $\forall x \, ( \, x \epsilon a \leftrightarrow x \epsilon b \, ) \rightarrow a = b$.
\item[Empty set:] $\exists x  \, \forall y  \, \lnot y \epsilon x $.
\item[Pairing:] $\exists x \, \forall y \, (\,  y \epsilon x \leftrightarrow y = a \lor y = b \, )$.
\item[Union:] $\exists x  \, \forall y \, ( \, y \epsilon x \leftrightarrow \exists z \epsilon a \, y \epsilon z  \, )$.
\item[Set induction:] $\forall x \, (\forall y  \epsilon x \, \phi(y) \rightarrow \phi(x)) \rightarrow \forall x \, \phi(x)$.
\item[Bounded separation:] $\exists x \,  \forall y \, ( \, y \epsilon x \leftrightarrow y \epsilon a \land \phi(y) \, ) $, for any bounded formula $\phi$ in which $a$ does not occur.
\item[Strong collection:] $\forall x \epsilon a \, \exists y \, \phi(x,y) \rightarrow \exists b \, \mbox{B}(x \epsilon a, y \epsilon b) \, \phi$, where $\mbox{B}(x \epsilon a, y \epsilon b) \, \phi$ abbreviates
\[ \forall x \epsilon a \, \exists y \epsilon b \, \phi \land \forall y \epsilon b \, \exists x \epsilon a \, \phi. \]
\item[Infinity:] $\exists a \, ( \, \exists  x \, x \epsilon a \, ) \land ( \, \forall x  \epsilon a \, \exists y \epsilon a \, x \epsilon y \, )$.
\end{description}
In fact, as shown in \cite{bergmoerdijk08}, the initial $\spower$-algebras in predicative categories with small maps form a complete semantics for the set theory {\bf RST}. To obtain complete semantics for better known intuitionistic set theories, like {\bf IZF} and {\bf CZF}, one needs further requirements on the class of small maps \smallmap{S}. For example, the set theory {\bf IZF} is obtained from {\bf RST} by adding the axioms
\begin{description}
\item[Full separation:] $\exists x \,  \forall y \, ( \, y \epsilon x \leftrightarrow y \epsilon a \land \phi(y) \, ) $, for any formula $\phi$ in which $a$ does not occur.
\item[Power set:] $\exists x \, \forall y \, ( \, y \epsilon x \leftrightarrow y \subseteq a \, )$, where $y \subseteq a$ abbreviates $\forall z \, ( z \epsilon y \rightarrow z \epsilon a)$.
\end{description}
And to obtain a sound and complete semantics for {\bf IZF} one requires of ones predicative category of small maps that it satisfies:
\begin{description}
\item[(M)] All monomorphisms belong to \smallmap{S}.
\item[(PS)] For any map $\func{f}{Y}{X} \in \smallmap{S}$, the power class object $\slspower{X} (f) \rTo X$ in $\ct{E}/X$ belongs to \smallmap{S}.
\end{description}
The set theory {\bf CZF}, introduced by Aczel in \cite{aczel78}, is obtained by adding to {\bf RST} a weakening of the power set axiom called subset collection:
\begin{description}
\item[Subset collection:] $\exists c \, \forall z \, ( \forall x \epsilon a \, \exists y \epsilon b \, \phi(x,y,z) \rightarrow \exists d \epsilon c \, \mbox{B}(x \epsilon a, y \epsilon d) \, \phi(x, y, z)) $.
\end{description}
For a suitable categorical analogue, see Section 2.3 below.

For the sake of completeness we also list the following two axioms, saying that certain $\Pi$-types and W-types are small. (The first therefore corresponds to the exponentiation axiom in set theory; we will say more about the second in Section 2.2 below.)
\begin{description}
\item[($\Pi$S)] For any map $\func{f}{Y}{X} \in \smallmap{S}$, a functor 
\[ \func{\Pi_f}{\ct{E}/Y}{\ct{E}/X} \]
right adjoint to pullback exists and preserves morphisms in \smallmap{S}.
\item[(WS)] For all $\func{f}{X}{Y} \in \smallmap{S}$ with $Y$ small, the W-type $W_f$ associated to $f$ is small.
\end{description}

\subsection{W-types}

In a predicative category with small maps $(\ct{E}, \smallmap{S})$ the axiom {\bf ($\Pi$E)} holds and therefore any small map \func{f}{B}{A} is exponentiable. It therefore induces an endofunctor on \ct{E}, which will be called the \emph{polynomial functor} $P_f$ associated to $f$. The quickest way to define it is as the following composition:
\diag{ \ct{C} \cong \ct{C}/1 \ar[r]^(.6){B^*} & \ct{C}/B \ar[r]^{\Pi_f} & \ct{C}/A \ar[r]^(.4){\Sigma_A} & \ct{C}/1 \cong \ct{C}. }
In more set-theoretic terms it could be defined as:
\[ P_f(X) = \sum_{a \in A} X^{B_a}. \]
Whenever it exists, the initial algebra for the polynomial functor $P_f$ will be called the \emph{W-type associated to} $f$.

Intuitively, elements of a W-type are well-founded trees. In the category of sets, all W-types exist, and the W-types have as elements well-founded trees, with an appropriate labelling of its edges and nodes. What is an appropriate labelling is determined by the branching type \func{f}{B}{A}: nodes should be labelled by elements $a \in A$, edges by elements $b \in B$, in such a way that the edges into a node labelled by $a$ are uniquely enumerated by $f^{-1}(a)$. The following picture hopefully conveys the idea:
\begin{displaymath}
\xymatrix@C=.75pc@R=.5pc{ & & \ldots & & & \ldots & \ldots & \ldots \\
           & & {\bullet} \ar[dr]_u & & a \ar[dl]^v & {\bullet} \ar[dr]_x & {\bullet} \ar[d]_y & {\bullet} \ar[dl]^z \\
*{\begin{array}{rcl}
f^{-1}(a) & = & \emptyset \\
f^{-1}(b) & = & \{ u, v \} \\
f^{-1}(c) & = & \{ x, y, z \} \\
& \ldots & 
\end{array}}       & a \ar[drr]_x & & b \ar[d]_y & & & c \ar[dlll]^z  \\
           & & & c & & & & } 
\end{displaymath}
This set has the structure of a $P_f$-algebra: when an element $a \in A$ is given, together with a map \func{t}{B_a}{W_f}, one can build a new element ${\rm sup}_a t \in W_f$, as follows. First take a fresh node, label it by $a$ and draw edges into this node, one for every $b \in B_a$, labelling them accordingly. Then on the edge labelled by $b \in B_a$, stick the tree $tb$. Clearly, this sup operation is a bijective map. Moreover, since every tree in the W-type is well-founded, it can be thought of as having been generated by a possibly transfinite number of iterations of this sup operation. That is precisely what makes this algebra initial. The trees that can be thought of as having been used in the generation of a certain element $w \in W_f$ are called its subtrees. One could call the trees $tb \in W_f$ the \emph{immediate subtrees} of ${\rm sup}_a t$, and $w' \in W_f$ a \emph{subtree} of $w \in W_f$ if it is an immediate subtree, or an immediate subtree of an immediate subtree, or\ldots, etc. Note that with this use of the word subtree, a tree is never a subtree of itself (so proper subtree might have been a better terminology).

We recall that there are two axioms concerning W-types:
\begin{description}
\item[(WE)] For all $\func{f}{X}{Y} \in \smallmap{S}$, the W-type $W_f$ associated to $f$ exists.
\item[(WS)] Moreover, if $Y$ is small, also $W_f$ is small.
\end{description}
Maybe it is not too late to point out the following fact, which explains why these axioms play no essential role in the impredicative setting:
\begin{theo}{WEandWSinimpred}
Let $(\ct{E}, \smallmap{S})$ be a category with small maps satisfying {\bf (NS)} and {\bf (M)}.
\begin{enumerate}
\item If \smallmap{S} satisfies {\bf (PE)}, then it also satisfies {\bf (WE)}.
\item If \smallmap{S} satisfies {\bf (PS)}, then it also satisfies {\bf (WS)}.
\end{enumerate}
\end{theo}
\begin{proof}
Note that in a category with small maps satisfying {\bf (M)} and {\bf (PE)} the object $\spower(1)$ is a subobject classifier. Therefore the first result can be shown along the lines of Chapter 3 in \cite{joyalmoerdijk95}. For showing the second result, one simply copies the argument why toposes with nno have all W-types from \cite{moerdijkpalmgren02}.
\end{proof}

In the sequel we will need the following result. We will write $\inhspower X$ for the object of small inhabited subobjects of $X$:
\[ \inhspower X = \{ A \in \spower X \, : \, \exists x \in X \, ( x \in A) \}. \]
\begin{theo}{initialalginpredcatofsmallmaps}
For any small map $f: B \to A$ in a predicative category with small maps $(\ct{E}, \smallmap{S})$, the endofunctors on \ct{E} defined by
\[ \Phi = P_f \circ \spower \qquad \mbox{and} \qquad \Psi = P_f \circ \inhspower \]
have initial algebras.
\end{theo}
\begin{rema}{oninitalgforphiandpsi}
Before we sketch the proof of \reftheo{initialalginpredcatofsmallmaps}, it might be good to explain the intuitive meaning of these initial algebras. In fact, they are variations on the W-types explained above: they are also classes of well-founded trees, but the conditions on the labellings of the nodes and edges are slightly different. It is still the case that nodes are labelled by elements $a \in A$ and edges with elements $b \in B$, in such a way that if $b \in B$ decorates a certain edge, then $f(b)$ decorates the node it points to. But whereas in a W-type, every node in a well-founded tree labelled with $a \in A$ has for every $b \in f^{-1}(a)$ \emph{precisely one} edge into it labelled with $b$, in the initial algebras for $\Phi$ there are \emph{set-many, and possibly none}, and in the initial algebra for $\Psi$ there are \emph{set-many, but at least one}.
\end{rema}
\begin{proof}
The proof of \reftheo{initialalginpredcatofsmallmaps} is a variation on that of Theorem 7.4 in \cite{bergmoerdijk08} and therefore we will only sketch the argument.

Fix a universal small map $\pi: E \to U$, and write 
\[ S = \{ (a \in A, u \in U, \phi: E_u \to B_a)  \}. \]
Let ${\cal K}$ be the W-type in \ct{E} associated to the map $g$ fitting into the pullback square
\diag{ R \ar[d]_g \ar[r] & E \ar[d]^{\pi} \\
S \ar[r]_{\rm{proj}} & U. }
An element $k \in {\cal K}$ is therefore of the form ${\rm sup}_{(a, u, \phi)}t$, where $(a,  u, \phi) \in S$ is the label of the root of $k$ and $t: E_u \to {\cal K}$ is the function that assigns to every element $e \in E_u$ the tree that is attached to the root of $k$ with the edge labelled with $e$. Define the following equivalence relation on ${\cal K}$ by recursion: ${\rm sup}_{(a, u, \phi)}t \sim {\rm sup}_{(a', u', \phi')}t'$, if $a = a'$ and 
\begin{quote}
for all $e \in E_u$ there is an $e' \in E_{u'}$ such that $\phi(e) = \phi'(e') \mbox{ and } t(e) \sim t'(e')$, and for all $e' \in E_{u'}$ there is an $e \in E_{u}$ such that $\phi(e) = \phi'(e') \mbox{ and } t(e) \sim t'(e')$.
\end{quote}
(The existence of this relation $\sim$ can be justified using the methods of \cite{berg05} or \cite{bergmoerdijk08}. See Theorem 7.4 in \cite{bergmoerdijk08}, for instance.) The equivalence relation is bounded (one proves this by induction) and its quotient is the initial algebra for $\Phi$.

The initial algebra for $\Psi$ is constructed in the same way, but with $S$ defined as
\[ S = \{ (a \in A, u \in U, \phi: E_u \to B_a)  \, : \, \phi \mbox{ is a cover} \}. \]
\end{proof}

\subsection{Fullness}

In order to express the subset collection axiom, introduced by Peter Aczel in \cite{aczel78}, in diagrammatic terms, it is helpful to consider an axiom which is equivalent to it called \emph{fullness} (see \cite{aczelrathjen01}). In the language of set theory one can formulate fullness using the notion of a \emph{multi-valued section}: a multi-valued section (or \emph{mvs}) of a function \func{\phi}{b}{a} is a multi-valued function $s$ from $a$ to $b$ such that $\phi s = \id_a$ (as relations). Identifying $s$ with its image, this is the same as a subset $p$ of $b$ such that $p \subseteq b \rTo a$ is surjective. For us, fullness states that for any such $\phi$ there is a small family of \emph{mvs}s such that any \emph{mvs} contains one in this family. Written out formally:
\begin{description}
\item[Fullness:] $\exists z\;\! (z \;\!\subseteq \;\!{\bf mvs}(\phi) \land \forall x \;\!\epsilon \;\!{\bf mvs}(\phi) \;\! \exists c \;\!\epsilon \;\!z \;\! (c \;\!\subseteq \;\!x))$.
\end{description}
Here, ${\bf mvs}(\phi)$ is an abbreviation for the class of all multi-valued sections of a function \func{\phi}{b}{a}, i.e., subsets $p$ of $b$ such that $\forall x \epsilon a \, \exists y \epsilon p \, \phi(y) = x$. 

In order to reformulate this diagrammatically, we say that a multi-valued section (\emph{mvs}) for a small map \func{\phi}{B}{A}, over some object $X$, is a subobject $P \subseteq B$ such that the composite $P \rTo A$ is a \emph{small} cover. (Smallness of this map is equivalent to $P$ being a bounded subobject of $B$.) We write
\[ {\rm mvs}_X(\phi) \]
for the set of all \emph{mvs}s of a map $\phi$. This set obviously inherits the structure of a partial order from Sub($B$). Note that any morphism \func{f}{Y}{X} induces an order-preserving map
\[ f^*: {\rm mvs}_X(\phi) \rTo {\rm mvs}_Y (f^*\phi), \]
obtained by pulling back along $f$. To avoid overburdening the notation, we will frequently talk about the map $\phi$ over $Y$, when we actually mean the map $f^* \phi$ over $Y$, the map $f$ always being understood. 

The categorical fullness axiom now reads:
\begin{description}
\item[(F)] For any $\func{\phi}{B}{A} \in \smallmap{S}$ over some $X$ with $A \rTo X \in \smallmap{S}$, there is a cover \func{q}{X'}{X} and a map \func{y}{Y}{X'} belonging to $\smallmap{S}$, together with an \emph{mvs} $P$ of $\phi$ over $Y$, with the following ``generic'' property: if \func{z}{Z}{X'} is any map and $Q$ any \emph{mvs} of $\phi$ over $Z$, then there is a map \func{k}{U}{Y} and a cover \func{l}{U}{Z} with $yk = zl$ such that $k^* P \leq l^* Q$ as \emph{mvs}s of $\phi$ over $U$.
\end{description}

It is easy to see that in a set-theoretic context fullness is a consequence of the powerset axiom (because then the collection of \emph{all} multi-valued sections of a map $\phi: b \to a$ forms a set) and implies the exponentiation axiom (because if $z$ is a set of \emph{mvs}s of the projection $p: a \times b \to a$ such that any \emph{mvs} is refined by one is this set, then the set of functions from $a$ to $b$ can be constructed from $z$ by selecting the univalued elements, i.e., those elements that are really functions). Showing that in a categorical context {\bf (F)} follows from {\bf (PS)} and implies {\bf ($\Pi$S)} is not much harder and we will therefore not write out a formal proof. 

In the sequel we will use the following two lemmas concerning the fullness axiom:
\begin{lemm}{diffgenmvss}
Suppose we have the following diagram
\diag{ Y_2 \ar[d]_{f_2} \ar[r]^{\beta} & Y_1 \ar[d]^{f_1} \\
X_2 \ar[r] \ar@{->>}[rd] & X_1 \ar@{->>}[d] \\
& X,}
in which the square is a quasi-pullback and $f_1$ and $f_2$ are small. When $P$ is a ``generic'' \emph{mvs} for a map \func{\phi}{B}{A} over $X$ living over $Y_1$ (``generic'' as in the statement of the fullness axiom), then $\beta^* P$ is also a generic \emph{mvs} for $\phi$, living over $Y_2$.
\end{lemm}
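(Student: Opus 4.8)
The plan is to check directly that, with $X_2 \to X$ as the new base cover, $f_2 \colon Y_2 \to X_2$ as the new small map, and $\beta^* P$ as the candidate \emph{mvs}, the genericity clause of {\bf (F)} is satisfied. That the data has the right shape is immediate: $f_2$ is small and $X_2 \to X$ is a cover by hypothesis, and since the square commutes the map $Y_2 \to X$ factors through $\beta$ (followed by the structure map $Y_1 \to X$), so by the remark preceding the statement (pullback of an \emph{mvs} along any map is again an \emph{mvs}) $\beta^* P$ is an \emph{mvs} of $\phi$ over $Y_2$.

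For the generic property, let $z \colon Z \to X_2$ be arbitrary and let $Q$ be any \emph{mvs} of $\phi$ over $Z$. Write $g \colon X_2 \to X_1$ for the lower horizontal map of the square and set $z' = g z \colon Z \to X_1$. Since the composite $Z \to X$ is the same whether taken through $X_2$ or through $X_1$, $Q$ is equally an \emph{mvs} of $\phi$ over $Z$ regarded over $X_1$, and we may feed $z'$ and $Q$ into the genericity of $P$. This yields a map $k_1 \colon U_1 \to Y_1$ and a cover $l_1 \colon U_1 \to Z$ with $f_1 k_1 = z' l_1$ and $k_1^* P \le l_1^* Q$ as \emph{mvs}s of $\phi$ over $U_1$.

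It remains to lift $k_1$ through $\beta$. Because $g(z l_1) = z' l_1 = f_1 k_1$, the pair $(z l_1, k_1)$ defines a map $U_1 \to X_2 \times_{X_1} Y_1$. The quasi-pullback hypothesis says the canonical comparison $Y_2 \to X_2 \times_{X_1} Y_1$ is a cover; pulling it back along $(z l_1, k_1)$, and using that covers are stable under pullback, we obtain a cover $l_2 \colon U \to U_1$ together with a map $k \colon U \to Y_2$ satisfying $\beta k = k_1 l_2$ and $f_2 k = z l_1 l_2$. Putting $l = l_1 l_2 \colon U \to Z$, which is again a cover, we have $f_2 k = z l$ (this is the equation ``$yk = zl$'' of {\bf (F)} with $y = f_2$), and
\[ k^*(\beta^* P) = l_2^*(k_1^* P) \le l_2^*(l_1^* Q) = l^* Q \]
since pullback preserves the order on subobjects. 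Thus $\beta^* P$ is generic, as claimed.

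The only delicate point is the lifting step: a quasi-pullback provides merely a \emph{cover} $Y_2 \to X_2 \times_{X_1} Y_1$, not an isomorphism, so $k_1$ cannot be lifted to $Y_2$ on the nose but only after refining $U_1$ by the cover $l_2$. This causes no harm precisely because the conclusion of {\bf (F)} already permits replacing $Z$ by an arbitrary cover, so the extra cover is absorbed into $l = l_1 l_2$; this is the sole place where the quasi-pullback assumption — rather than a strict pullback — is used.
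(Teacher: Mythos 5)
Your proof is correct and is exactly the diagram chase the paper alludes to (the paper's own proof reads only ``A simple diagram chase''). You have correctly identified the one non-trivial point — that the quasi-pullback gives a cover rather than an isomorphism to $X_2\times_{X_1}Y_1$, which forces the extra refinement $l_2$ — and this is absorbed harmlessly because the conclusion of {\bf (F)} already allows passing to a cover of $Z$.
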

\begin{proof}
A simple diagram chase.
\end{proof}

\begin{lemm}{redlemmforfull}
Suppose we are given a diagram of the form
\diag{ B_0 \ar@{->>}[r] \ar[d]_{\psi} & B \ar[d]^{\phi} \\
A_0 \ar@{->>}[r] \ar[d]_i & A \ar[d]^j \\
X_0 \ar@{->>}[r]_p & X, }
in which both squares are covering and all the vertical arrows are small. If a generic  \emph{mvs} for $\psi$ exists over $X_0$, then also a generic \emph{mvs} for $\phi$ exists over $X$.
\end{lemm}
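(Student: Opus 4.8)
The plan is to manufacture, directly from the given data, the cover, the small map and the generic \emph{mvs} that axiom {\bf (F)} requires for $\phi$ over $X$. This makes sense because $A \to X$ is the map $j$, which is small. Unravelling the hypothesis, fullness for $\psi$ supplies a cover $q_0 : X_0' \to X_0$, a small map $y_0 : Y_0 \to X_0'$, and an \emph{mvs} $P_0$ of $\psi$ over $Y_0$ which is generic in the sense of {\bf (F)}. I would set $X' := X_0'$, take $Y := Y_0$ with $y := y_0$, and put $q := p q_0 : X' \to X$; the last map is a cover since covers compose in a regular category. What remains is to build an \emph{mvs} $P$ of $\phi$ over $Y$ and to check that it is generic.

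For the construction I would pull the two stacked covering squares back along the (compatible) structure maps $Y_0 \to X_0$ and $Y_0 \to X$; pullback preserves quasi-pullbacks and covers, so the induced maps $(A_0)_{Y_0} \to A_{Y_0}$ and $(B_0)_{Y_0} \to B_{Y_0}$ are again covers. I then put
\[ P := {\rm im}( P_0 \hookrightarrow (B_0)_{Y_0} \twoheadrightarrow B_{Y_0} ) \subseteq B_{Y_0}. \]
That $P \to A_{Y_0}$ is a cover is immediate: it is covered by $P_0$, and $P_0 \to A_{Y_0}$ — a small cover followed by a cover — is one already. For smallness I would localize to the slice $\ct{E}/Y_0$: there both $(A_0)_{Y_0}$ and $A_{Y_0}$ are small objects (pullbacks of the small maps $i$ and $j$), hence any map between them, in particular $(A_0)_{Y_0} \to A_{Y_0}$, is small; so $P_0 \to A_{Y_0}$ is small, and applying {\bf (A6)} to the triangle $P_0 \twoheadrightarrow P \to A_{Y_0}$ shows $P \to A_{Y_0}$ is small. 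Thus $P$ is a bounded subobject of $B_{Y_0}$, i.e.\ an \emph{mvs} of $\phi$ over $Y$.

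To prove genericity, take an arbitrary $z : Z \to X'$ and an \emph{mvs} $Q$ of $\phi$ over $Z$. Pulling the two covering squares back along $Z \to X$ and forming $Q' := Q \times_{B_Z} (B_0)_Z \subseteq (B_0)_Z$, I would verify that $Q'$ is an \emph{mvs} of $\psi$ over $Z$ (viewed over $X_0$): it is bounded in $(B_0)_Z$ because $Q$ is bounded in $B_Z$ and boundedness is stable under pullback, while $Q' \to (A_0)_Z$ is a cover since $(B_0)_Z \to B_Z \times_{A_Z} (A_0)_Z$ is a cover (the top square is a quasi-pullback) and $Q \to A_Z$ is a small cover. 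Genericity of $P_0$ then yields $k : U \to Y_0$ and a cover $l : U \to Z$ with $y_0 k = z l$ and $k^* P_0 \leq l^* Q'$ in ${\rm Sub}((B_0)_U)$. Since image factorizations are stable under pullback, $k^* P = {\rm im}( k^* P_0 \to B_U )$; and since $l^* Q'$ is the pullback of $l^* Q$ along the cover $(B_0)_U \to B_U$, one has ${\rm im}( l^* Q' \to B_U ) = l^* Q$. Applying the monotone operation ``direct image along $(B_0)_U \to B_U$'' to the inequality $k^* P_0 \leq l^* Q'$ gives $k^* P \leq l^* Q$, which is exactly what {\bf (F)} demands.

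The one genuinely delicate point, and the step I expect to be the main obstacle, is the smallness bookkeeping in the second paragraph: passing from $P_0$ to $P$ is a \emph{direct} image along a cover, and showing it still lands among the bounded subobjects forces the localization to $\ct{E}/Y_0$ together with the combination of {\bf (A6)} and the fact that any map between objects small over a common base is small (cf.\ \refrema{stabunderslicing} and {\bf (A9)}). Once that is in place the genericity verification is a routine diagram chase resting on the stability of covers, covering squares and image factorizations under pullback; an essentially identical transport argument is what makes \reflemm{diffgenmvss} work.
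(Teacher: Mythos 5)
The paper does not give an inline proof of this lemma: it is stated and then referenced to Lemma~6.23 of \cite{bergmoerdijk08}, so there is no proof in the present paper to compare against directly. That said, your argument is correct and is essentially the canonical way to prove the claim. A few remarks on the checkpoints that deserve care. The cover $(B_0)_{Y_0}\twoheadrightarrow B_{Y_0}$ should really be seen as the pullback along $Y_0\to X_0$ of $B_0\twoheadrightarrow B_{X_0}$, where $B_0\to B_{X_0}$ is a cover because it factors as $B_0\twoheadrightarrow B\times_A A_0\to B\times_A A_{X_0}$, the first map being a cover by the top quasi-pullback and the second being a pullback of the cover $A_0\twoheadrightarrow A_{X_0}$ coming from the middle covering square; you use this implicitly but it is worth making explicit, since it is exactly where both hypotheses about the given squares enter. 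Your smallness bookkeeping for $P$ is right, and is indeed the only delicate point: the fact that \emph{any} morphism between small objects in a slice is small follows from factoring $f:A\to B$ through its graph $A\rightarrowtail A\times B\to B$ and invoking {\bf (A9)}, {\bf (A1)}, {\bf (A5)}, precisely as you indicate via \refrema{stabunderslicing} and {\bf (A9)}; then {\bf (A6)} transfers smallness from $P_0\to A_{Y_0}$ to $P\to A_{Y_0}$. For $Q'$, the cover condition $Q'\twoheadrightarrow (A_0)_Z$ should be justified by factoring through $Q\times_{A_Z}(A_0)_Z$: the map $Q'\to Q\times_{A_Z}(A_0)_Z$ is a pullback of $(B_0)_Z\to B_Z\times_{A_Z}(A_0)_Z$ (cover by the top quasi-pullback), and $Q\times_{A_Z}(A_0)_Z\to(A_0)_Z$ is a pullback of the cover $Q\twoheadrightarrow A_Z$; your parenthetical explanation is the right ingredients, just compressed. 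Finally, the passage from $k^*P_0\leq l^*Q'$ to $k^*P\leq l^*Q$ by taking images along $(B_0)_U\to B_U$ is correct because image factorizations are stable under pullback and direct image is monotone; the identification ${\rm im}(l^*Q'\to B_U)=l^*Q$ holds because $l^*Q'\to l^*Q$ is a cover, being a pullback of $(B_0)_U\twoheadrightarrow B_U$. In short, there is no gap; this is a correct, self-contained proof of the cited lemma.
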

\begin{proof}
This was Lemma 6.23 in \cite{bergmoerdijk08}.
\end{proof}

\subsection{Axiom of multiple choice}

The axiom of multiple choice was introduced by Moerdijk and Palmgren in \cite{moerdijkpalmgren02}. Their motivation was to have a choice principle which is implied by the existence of enough projectives (``the presentation axiom'' in Aczel's terminology) and is stable under taking sheaves (unlike the existence of enough projectives). We will use it in Section 4.4 to show that the axiom {\bf(WS)} is stable under taking sheaves. 

One can give a succinct formulation of the axiom of multiple choice using the notion of a collection span (see \cite[Definition 6.14]{bergmoerdijk08}).\footnote{The way we formulate the Axiom of Multiple Choice here is slightly different from how it was stated in \cite{moerdijkpalmgren02}. Both formulations are equivalent, however; see \cite{bergmoerdijk11b}.}
\begin{defi}{collspan}
A span $(g, h)$ in \ct{E}
\diag{ C & D \ar[r]^h \ar[l]_g & B}
is called a \emph{collection span}, when, in the internal logic, it holds that for any map \func{f}{E}{D_c} covering some fibre of $g$, there is a fibre $D_{c'}$ of $g$ and a map  \func{p}{D_{c'}}{E} such that $fp$ is a cover over $B$. A collection span is $\ct{E}/A$ will be called a \emph{collection span over $A$}.

Diagrammatically, we can express this by asking that for any map \func{e}{E}{C} and any epi $F \rTo E \times _C D$ there is a diagram of the form
\diag{
& & B & & \\
D \ar[d]_g \ar@/^/[urr]^h & E' \times_C D \ar[r] \ar[l] \ar[d] & F \ar@{->>}[r] & E \times_C D \ar[r] \ar[d] & D \ar[d]^g \ar@/_/[ull]_h \\
C & E' \ar@{->>}[rr] \ar[l] & & E \ar[r]_c & C}
where the middle square is a covering square, involving the given map $F \rTo D \times _C E$, while the other two squares are pullbacks.
\end{defi}

\begin{description}
\item[(AMC)] (Axiom of multiple choice) For any small map \func{f}{Y}{X}, there is a cover \func{q}{A}{X} and a diagram 
\diag{ D \ar[d]_g \ar@{->>}[r]^h & q*Y \ar[d]_{q^*f} \ar@{->>}[r] & Y \ar[d]^f \\
C \ar@{->>}[r]_r & A \ar@{->>}[r]_q & X,}
in which the right square is a pullback and the left square a covering square in which all maps are small and in which $(g, h)$ is a collection span over $A$.
\end{description}
In the internal logic {\bf (AMC)} is often applied in the following form:
\begin{lemm}{AMCininternallogic}
In a predicative category with small maps in which {\bf (AMC)} holds, the following principle holds in the internal logic: any small map \func{f}{B}{A} between small objects fits into a covering square
\diag{ D \ar[r]^q \ar[d]_g & B \ar[d]^f \\
C \ar[r]_p & A }
in which all maps and objects are small and $(g, q)$ is a collection span over $A$.
\end{lemm}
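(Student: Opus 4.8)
The plan is to unwind the internal statement via Kripke--Joyal semantics and to reduce it to the external axiom {\bf (AMC)}. Using that everything is stable under slicing (\refrema{stabunderslicing}), it is enough to prove the following external statement: for every small map $f: B \to A$ in \ct{E} with $A$ and $B$ small, there are a cover $u: X \to 1$ and, in $\ct{E}/X$, a covering square with top side $\bar D \to u^*B$, bottom side $p: \bar C \to u^*A$, left side $g: \bar D \to \bar C$ and right side $u^*f$, in which $\bar C$, $\bar D$, $g$, $p$ and the top map $q: \bar D \to u^*B$ are all small over $X$, and in which $(g,q)$ is a collection span over $u^*A$. Instantiated in $\ct{E}/V$, this is exactly what the Kripke--Joyal clause for the lemma requires at a stage $V$.

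I would start by applying {\bf (AMC)} to $f: B \to A$, producing a cover $q_0 : A_0 \to A$ and a diagram as in {\bf (AMC)}, with small maps $g_0: D_0 \to C_0$, $h_0: D_0 \to q_0^*B$ and $r_0 : C_0 \to A_0$, the left square covering, the right square a pullback, and $(g_0, h_0)$ a collection span over $A_0$. The one thing preventing this from already being the conclusion is that the cover $q_0$ need not be small, so that $A_0$, $C_0$ and $D_0$ need not be small objects. To fix this I would invoke {\bf (A7)} (Collection) for the cover $q_0$ and the small map $A \to 1$: it yields a cover $u : X \to 1$ and, over $X$, a \emph{small} object $Z$ together with a map $z: Z \to A_0$ whose induced map $Z \to u^*A = A \times X$ is a cover. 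Pulling the {\bf (AMC)}-diagram back along $z$ gives small maps $\bar g := z^*g_0$, $\bar h := z^*h_0$, $\bar r := z^*r_0$ with all domains and codomains small over $X$ (since $Z$ is), still forming a covering square over $Z$, with $(\bar g, \bar h)$ a collection span over $Z$. Finally I would set $p$ to be $\bar r$ followed by $Z \to u^*A$, and $q$ to be $\bar h$ followed by the projection $Z \times_{u^*A} u^*B \to u^*B$; a routine diagram chase then shows that the square over $X$ built from $\bar D$, $\bar C$, $u^*B$, $u^*A$ with sides $g, p, q, u^*f$ is a covering square, that all of its objects and maps are small over $X$, and that $(\bar g, q)$ is a collection span over $u^*A$.

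The step I expect to be the crux is showing that $(\bar g, \bar h) = z^*(g_0, h_0)$ is again a collection span, and then that $(\bar g, q)$ is one over $u^*A$. I would base this on two stability properties, both of which I would read off from \refdefi{collspan} (and which are implicit in \cite{bergmoerdijk08}): first, a collection span over a base $Y$ pulls back, along any map $z : Z \to Y$, to a collection span over $Z$ --- the point being that the ``over $Y$'' formulation of the defining property forces the auxiliary cover of the given $E$ that witnesses the ``new fibres'' to be compatible over $Y$ with $E$, which is precisely what allows it to be lifted to the pullback $\bar C = Z \times_Y C_0$; second, post-composing the $h$-leg of a collection span with a cover again gives a collection span (the defining condition only gets weaker when the codomain of $h$ is replaced by a quotient of it), and this is compatible with the simultaneous change of base from $Z$ to $u^*A$. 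It is worth stressing that the first property genuinely relies on keeping track of the base: a collection span merely ``over $1$'' need not pull back, so the bookkeeping of bases in {\bf (AMC)} and in \refdefi{collspan} is essential. The remaining ingredients --- that any map between two objects small over a common base is small (from {\bf (A9)}, {\bf (A1)} and {\bf (A5)}), that covering squares and covers are stable under pullback, and the various compatibilities in the final chase --- are routine.
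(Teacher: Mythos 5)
The paper's entire proof of this lemma is the one-line citation ``This is proved exactly as Proposition~4.6 in \cite{moerdijkpalmgren02}'', so there is no in-text argument to compare against; your reconstruction is correct and is precisely the argument one would expect behind that citation: externalize via Kripke--Joyal and stability under slicing, apply {\bf (AMC)} to $f$ to obtain a (possibly large) cover $q_0\colon A_0\to A$ carrying a collection span over $A_0$, then use the collection axiom {\bf (A7)} on the pair $(q_0,\, A\to 1)$ to replace $A_0$ by a small-over-$X$ object $Z$ for a suitable cover $X\to 1$, and pull the {\bf (AMC)}-diagram back along $z\colon Z\to A_0$, finishing by post-composing with the covers $\bar C\to Z\to X\times A$ and $\bar B\to X\times B$. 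The two stability facts you isolate do carry the real content and both hold for the reasons you indicate; in particular, the pullback step works exactly because a map $e\colon E\to z^*C_0$ over $Z$ composes to a map $E\to C_0$ over $A_0$, the collection-span property over $A_0$ then returns $E'\to E$ and $E'\to C_0$ as maps over $A_0$, and since $E$ already lies over $Z$ the commutativity over $A_0$ forces $E'$ and $E'\to C_0$ to lift over $Z$ and through $z^*C_0$.
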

\begin{proof}
This is proved exactly as Proposition 4.6 in \cite{moerdijkpalmgren02}.
\end{proof}

The following result was proved in \cite{moerdijkpalmgren02} as well. Recall from \cite{aczel86,aczelrathjen01} that the existence of many inductively defined sets within {\bf CZF} can be guaranteed, in a predicatively acceptable way, by extending {\bf CZF} with Aczel's Regular Extension Axiom. 
\begin{prop}{AMCgivesREA}
If $(\ct{E}, \smallmap{S})$ is a predicative category with small maps satisfying the axioms {\bf (AMC)}, {\bf ($\Pi$S)} and {\bf (WS)}, then Aczel's Regular Extension Axiom holds in the initial \spower-algebra in this category.
\end{prop}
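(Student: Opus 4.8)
The plan is to adapt the argument of Moerdijk and Palmgren \cite{moerdijkpalmgren02}: inside the initial \spower-algebra $V$, regular sets will be produced as internal images of suitable well-founded trees, and the ``collection'' clause in the definition of regularity will be extracted from the uniformity built into the collection spans supplied by {\bf (AMC)}.

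Since $(V,\epsilon)$ already models {\bf RST}, it is enough to verify the single axiom REA in $V$: internally in \ct{E}, every element of $V$ is a subset of some \emph{regular} set of $V$, where a set $A$ is regular if it is transitive, inhabited, and closed under collection from its elements --- for every $a\,\epsilon\, A$ and every $R\subseteq a\times A$ with $\forall x\,\epsilon\, a\,\exists y\,(\langle x,y\rangle\,\epsilon\, R)$ there is $b\,\epsilon\, A$ with $\mathrm{B}(x\,\epsilon\, a, y\,\epsilon\, b)\,(\langle x,y\rangle\,\epsilon\, R)$. The two additional hypotheses enter only in the construction of $A$: {\bf (WS)} will guarantee that the object we build is a small subobject of $V$, hence a genuine set, and {\bf ($\Pi$S)} will keep the polynomial functors involved (and the exponentials implicit in the closure of $A$ under function spaces) inside the small maps.

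Now fix an element of $V$. Working in the internal logic, one applies {\bf (AMC)}, in the internal form of \reflemm{AMCininternallogic}, to a small map assembled from this element (together, if convenient, with the universal small map $\pi\colon E\to U$), obtaining --- after passing to a cover, which is harmless for verifying an internal existential statement --- a collection span over a small base, with vertex $D$ and legs $g\colon D\to C$ and $h\colon D\to B$, in which all objects and maps are small, and which is chosen ``rich'' enough that the fibres of $g$ code the transitive closure of the given set as well as, for each of its labels, every total relation to the set being built. Let $A$ be the internal image in $V$ of the W-type $W_g$ (equivalently, of the corresponding set-branching tree object of \reftheo{initialalginpredcatofsmallmaps}): since $C$ is small, $W_g$ is small by {\bf (WS)}, so $A$ is a set of $V$; since these trees are closed under passing to subtrees, $A$ is transitive; and it is inhabited and contains, as a subset, the set we started from, by the choice of the span. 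For the collection clause, given $a\,\epsilon\, A$ and a total $R\subseteq a\times A$, one pulls $R$ back to a cover of the fibre of $g$ labelling $a$, applies the defining property of the collection span to dominate this cover by a single fibre $D_{c'}$ together with a map into it that is a cover over $B$, and reads off from $D_{c'}$ an element $b\,\epsilon\, A$; unwinding the span condition shows that $b$ witnesses $\mathrm{B}(x\,\epsilon\, a, y\,\epsilon\, b)\,(\langle x,y\rangle\,\epsilon\, R)$.

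I expect the crux to be precisely this last verification --- translating the diagrammatic ``collection span'' property of $(g,h)$ into the internal set-theoretic statement $\mathrm{B}(x\,\epsilon\, a, y\,\epsilon\, b)\,\phi$, matching an arbitrary multi-valued relation on subsets of $A$ against the uniform covering data the span records, and checking that no smallness is lost along the way so that {\bf (WS)} genuinely applies. A subsidiary point is to arrange the base of the collection span generously enough (this is one reason to feed in the universal small map) that the \emph{same} construction produces a regular set above \emph{every} element of $V$; here the freedom noted in \refrema{oninitalgforphiandpsi} --- for the trees to carry set-many branches at each node --- is exactly what makes arbitrary total relations realisable inside $A$.
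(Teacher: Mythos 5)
The paper gives no argument for this proposition at all: its ``proof'' is the single sentence recording that the result was established in Moerdijk--Palmgren \cite{moerdijkpalmgren02}. There is therefore no in-paper proof to compare your attempt against, and what you have produced should be read as a reconstruction of the cited argument.

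At that level your sketch is credible and correctly places the three hypotheses: \textbf{(AMC)} supplies a collection span whose uniform covering property is what yields the collection clause of regularity; \textbf{(WS)} ensures that the W-type built from the span's left leg is small, so that its image in $V$ is a genuine set; and \textbf{($\Pi$S)} keeps the intermediate polynomial and function-space constructions small. Two points want tightening in a full write-up. First, the sentence asking the span to be ``rich enough that the fibres of $g$ code \ldots every total relation to the set being built'' reads as circular, since the span must be chosen before $A$ exists; the actual argument avoids this because the collection-span property is generic with respect to \emph{arbitrary} covers of fibres of $g$, so no advance knowledge of $A$ is required --- the total relation $R\subseteq a\times A$ is only turned into a cover $E\to D_c$ \emph{after} $A$ has been built, and the span then dominates that cover by some $D_{c'}$. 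Second, the appeal to \refrema{oninitalgforphiandpsi} (trees with set-many branches) is not obviously needed: once the span property hands you $p\colon D_{c'}\to E$, the witness $b$ is already the image of the ordinary tree ${\rm sup}_{c'}t'$ with $t'$ read off from $p$, so the plain W-type $W_g$ appears sufficient; if you do wish to use $\Phi$ or $\Psi$, you should say why the extra flexibility is required rather than merely convenient.
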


In addition, we will need:

\begin{prop}{AMCgivesfullness}
Let $(\ct{E}, \smallmap{S})$ be a predicative category with small maps. If \smallmap{S} satisfies the axioms {\bf (AMC)} and {\bf ($\Pi$S)}, then it satisfies the axiom {\bf (F)} as well.
\end{prop}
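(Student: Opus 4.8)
The plan is to derive the categorical fullness axiom {\bf (F)} from {\bf (AMC)} and {\bf ($\Pi$S)} by first applying {\bf (AMC)} to the small map $\phi$ (working over $X$) to obtain a collection span, and then using the $\Pi$-type structure provided by {\bf ($\Pi$S)} to package "all multi-valued sections obtainable from the span" into a single small family, which will turn out to be generic. Concretely, given $\func{\phi}{B}{A}\in\smallmap{S}$ over $X$ with $A\rTo X$ small, I would first reduce to the case where $X$ is small by slicing (using Remark~\ref{rema:stabunderslicing}, that all the relevant axioms are stable under slicing, so it suffices to verify {\bf (F)} in each slice over $1$); then $A$ and $B$ are small as well. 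By Lemma~\ref{lemm:AMCininternallogic}, $\phi$ fits into a covering square
\diag{ D \ar[r]^q \ar[d]_g & B \ar[d]^{\phi} \\
C \ar[r]_p & A }
with all objects and maps small and $(g,q)$ a collection span over $A$ (in fact one must be a little careful and do this over $X$, obtaining a cover $X'\to X$ first; I will suppress this and work as if $X$ is already small, restoring the cover $q\colon X'\to X$ at the end as in the statement of {\bf (F)}).

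Next I would use {\bf ($\Pi$S)} to form, over $C$, the object of those partial sections of $g$ that are "full over a fibre of $\phi$" — more precisely, I would build the small object $Y$ whose elements are pairs consisting of a subobject $C'\subseteq C$ lying over $A$ via $p$ together with a section of $g$ restricted to $C'$, subject to the condition that the composite $C'\rTo C\rTo A$ is a cover; since $A$, $B$, $C$, $D$ are all small and {\bf ($\Pi$S)} gives small $\Pi$-types, and bounded separation (Remark~\ref{rema:bounded separation}) lets me cut out the subobject defined by the covering condition, this $Y$ is small. The canonical map $y\colon Y\to X$ (i.e.\ $Y\to X'$ after restoring the cover) is then small, and over $Y$ there is a canonical \emph{mvs} $P$ of $\phi$: namely, push the universal partial section forward along $q\colon D\to B$ to get a subobject of $B$ over $Y$, and the covering condition built into $Y$ guarantees that $P\rTo A$ is a small cover.

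The heart of the argument — and the step I expect to be the main obstacle — is verifying that this $P$ is \emph{generic} in the sense of {\bf (F)}: given any $\func{z}{Z}{X'}$ and any \emph{mvs} $Q$ of $\phi$ over $Z$, I must produce $\func{k}{U}{Y}$ and a cover $\func{l}{U}{Z}$ with $yk=zl$ and $k^*P\leq l^*Q$. The idea is to pull the collection span back to $Z$; the \emph{mvs} $Q\subseteq B$ over $Z$ is a small cover of $A$ over $Z$, so pulling $Q$ back along $q\colon D\to B$ and using the collection-span property of $(g,q)$ (which says precisely that any cover of a fibre of $g$ is refined, after passing to another fibre, by a map factoring through $g$ along $B$), I can, fibrewise over $Z$, find partial sections of $g$ whose $q$-images are contained in $Q$ and still cover $A$. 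Making this fibrewise/internal-logic construction into an actual cover $l\colon U\to Z$ together with a classifying map $k\colon U\to Y$ requires one more application of the collection axiom {\bf (A7)} (or {\bf (AMC)} again) to "collect" the witnesses into a single small object $U$; then $k^*P$ is by construction contained in $l^*Q$. The bookkeeping with the auxiliary cover $X'\to X$ and the fact that everything must be done relative to $X$ (not assuming $X$ small from the outset) is where Lemmas~\ref{lemm:diffgenmvss} and~\ref{lemm:redlemmforfull} come in: Lemma~\ref{lemm:redlemmforfull} in particular lets me transfer a generic \emph{mvs} built over a cover $X_0\to X$ back down to $X$, so the reduction to the small case is legitimate. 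I would expect the write-up to be somewhat involved but to follow the same pattern as the proof that {\bf (PS)} implies {\bf (F)} in \cite{bergmoerdijk08}, with $\Pi$-types plus the collection span playing the role that the power object played there.
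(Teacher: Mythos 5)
Your proposal has the right overall shape---reduce to $X = 1$ by slicing and \reflemm{redlemmforfull}, apply {\bf (AMC)} to obtain a small collection-type structure, build a small family of \emph{mvs}s from sections over it using {\bf ($\Pi$S)}, and verify genericity via the collection-span property---but you apply {\bf (AMC)} to the wrong map, and this breaks both halves of the argument. You invoke \reflemm{AMCininternallogic} for $\phi\colon B \to A$ itself, obtaining a collection span $(g,q)$ \emph{over} $A$. The paper instead applies it to $A \to 1$, producing a span $C \xleftarrow{g} D \xrightarrow{h} A$ in which $h$ is a cover of $A$ and the collection property is stated relative to $A$.

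The first problem is smallness. The paper's $Y$ is $\{(c, s) : c \in C,\ s\colon D_c \to B,\ \phi s = h_c\}$, a bounded subobject of $\sum_{c \in C} B^{D_c}$, hence small by {\bf ($\Pi$S)} and \refrema{bounded separation} once $B$, $C$, $D$ are small; the associated \emph{mvs} is $\mathrm{Im}(s)$, which covers $A$ because $\phi s = h_c$ does. Your $Y$, by contrast, is indexed by arbitrary subobjects $C' \subseteq C$ (together with a section of $g$ over $C'$). Without {\bf (PS)}---exactly the hypothesis this proposition is meant to dispense with---there is no reason for the collection of subobjects of $C$ to be small, so {\bf ($\Pi$S)} together with bounded separation does not make this $Y$ small.

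The second and more fundamental problem is genericity. In the paper's setup, given an \emph{mvs} $Q \subseteq B$, the restriction $\phi|_Q\colon Q \to A$ is a cover, and pulling it back along $h_c\colon D_c \to A$ gives a \emph{cover} $E \to D_c$ of a fibre of $g$---precisely the input the collection-span property requires. In your setup, pulling $Q$ back along $q\colon D \to B$ yields only a \emph{subobject} $q^{-1}(Q) \cap D_c$ of $D_c$, which is proper whenever $Q$ is a proper \emph{mvs}; this is not a cover of $D_c$, so the collection-span property never applies. The span property lets you refine covers of fibres of $g$; it does not let you choose elements of nonempty subobjects of those fibres, which is what you would need to produce a partial section of $g$ whose $q$-image lands in $Q$. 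The argument genuinely requires a collection \emph{cover of} $A$ so that pulling back an \emph{mvs} (a cover of $A$) along a piece of that cover gives a cover of a fibre; this is exactly what applying {\bf (AMC)} to $A \to 1$, rather than to $\phi$, supplies.
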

\begin{proof}
We argue internally and use \reflemm{AMCininternallogic}. So suppose that {\bf (AMC)} holds and $f: B \to A$ is a small map between small objects. We need to find a small collection of \emph{mvs}s $\{ P_y \, : \, y \in Y \}$ such that any \emph{mvs} of $f$ is refined by one in this family.

We apply \reflemm{AMCininternallogic} to $A \to 1$ to obtain a covering square of the form
\diag{ D \ar[r]^h \ar[d] & A \ar[d] \\
C \ar[r] & 1, }
such that for any cover $p: E \to D_c$ we find a $c' \in C$ and a map $t: D_{c'} \to E$ such that $pt$ is a cover over $A$. Let $Y$ be the collection of all pairs $(c,s)$ with $c$ in $C$ and $s$ a map $D_c \to B$ such that $fs = h_c$, and let $P_y$ be the image of the map $s:D_c \to B$. Then $P_y$ is an \emph{mvs}, because the $h_c$ are epi, and $Y$ is small, because {\bf ($\Pi$S)} holds.

Now suppose $n: Q \to B$ is any mono such that $fn: Q \to B \to A$ is a cover. Pick a $c \in C$ and pull back $fn$ along $h_c$ to obtain a cover $q: E \to D_c$, as in:
\diag{ E \ar[r]^p \ar@{->>}[d]_q & Q \ar@{->>}[d]^{fn} \\
D_c \ar[r]_{h_c} & A. } 
It follows that there exists an element $c' \in C$ and a map $g: D_{c'} \to E$ such that $qg$ is a cover over $A$. Set $s = npg$ and $y = (c', s)$. Then $P_y = {\rm Im}(g)$ is contained in $Q$.
\end{proof}

\subsection{Main results}

After all these definitions, we can formulate our main result. Let ${\cal A}$ be either $\{ {\bf (F)} \}$, or $\{ {\bf (AMC), (\Pi S), (WS) } \}$, or $\{ {\bf (M), (PS)} \}$.

\begin{theo}{maintheorem}
Let $(\ct{E}, \ct{S})$ be a predicative category with small maps for which all the axioms in ${\cal A}$ hold and let $(\ct{C}, {\rm Cov})$ be an internal Grothendieck site in \ct{E}, such that the codomain map $\ct{C}_1 \to \ct{C}_0$ is small and a presentation for the topology exists. Then in the category of internal sheaves $\shct{\ct{E}}{\ct{C}}$  one can identify a class of maps making it into a predicative category with small maps for which the axioms in ${\cal A}$ holds as well. 
\end{theo}

In combination with \reftheo{existenceofinitZFalgebra} this result can be used to prove the existence of sheaf models of various constructive set theories:

\begin{coro}{maincorollary}
Suppose that $(\ct{E}, \ct{S})$ is a predicative category with small maps satisfying the axiom ${\bf (F)}$ and suppose that $(\ct{C}, {\rm Cov})$ is an internal Grothendieck site in \ct{E}, such that the codomain map $\ct{C}_1 \to \ct{C}_0$ is small and a presentation for the topology exists. Then the initial \spower-algebra in $\shct{\ct{E}}{\ct{C}}$ exists and is a model of {\bf CZF}. If, moreover,
\begin{enumerate}
\item the axioms ${\bf (AMC)}$ and ${\bf (WS)}$ hold in \ct{E}, then the initial \spower-algebra in $\shct{\ct{E}}{\ct{C}}$ also models Aczel's Regular Extension Axiom.
\item the axioms ${\bf (M)}$ and ${\bf (PS)}$ hold in \ct{E}, then the initial \spower-algebra in $\shct{\ct{E}}{\ct{C}}$ is a model of {\bf IZF}.
\end{enumerate}
\end{coro}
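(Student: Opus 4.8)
The plan is to deduce this from \reftheo{maintheorem} together with the modelling results for initial $\spower$-algebras recalled in Section 2.1. The hypotheses on $(\ct{C}, {\rm Cov})$ are precisely those of \reftheo{maintheorem}, so we may apply that theorem with ${\cal A} = \{ {\bf (F)} \}$: the category of internal sheaves $\shct{\ct{E}}{\ct{C}}$ carries a class of maps making it a predicative category with small maps in which {\bf (F)} holds. By \reftheo{existenceofinitZFalgebra} the $\spower$-functor on $\shct{\ct{E}}{\ct{C}}$ then has an initial algebra $V$. As recalled in Section 2.1, the initial $\spower$-algebra in any predicative category with small maps models {\bf RST}; and, as shown in \cite{bergmoerdijk08}, validity of the categorical fullness axiom {\bf (F)} forces the subset collection axiom in this model. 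Since {\bf CZF} is {\bf RST} together with subset collection, $V$ is a model of {\bf CZF}.

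For part (1), recall from the discussion following the statement of {\bf (F)} that {\bf (F)} implies {\bf ($\Pi$S)}. Hence, under the extra hypotheses, all three of {\bf (AMC)}, {\bf ($\Pi$S)} and {\bf (WS)} hold in $(\ct{E}, \ct{S})$, and we may apply \reftheo{maintheorem} once more, now with ${\cal A} = \{ {\bf (AMC), (\Pi S), (WS)} \}$: these axioms are inherited by $\shct{\ct{E}}{\ct{C}}$, equipped with the same class of small maps as before. Thus $\shct{\ct{E}}{\ct{C}}$ is a predicative category with small maps satisfying {\bf (AMC)}, {\bf ($\Pi$S)} and {\bf (WS)}, and \refprop{AMCgivesREA}, applied inside $\shct{\ct{E}}{\ct{C}}$, yields that Aczel's Regular Extension Axiom holds in its initial $\spower$-algebra $V$. (If one prefers, {\bf (F)} in $\shct{\ct{E}}{\ct{C}}$ can also be recovered from {\bf (AMC)} and {\bf ($\Pi$S)} there via \refprop{AMCgivesfullness}.)

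For part (2), the additional hypotheses {\bf (M)} and {\bf (PS)} are handled by a third application of \reftheo{maintheorem}, this time with ${\cal A} = \{ {\bf (M), (PS)} \}$, so that {\bf (M)} and {\bf (PS)} hold in $\shct{\ct{E}}{\ct{C}}$ as well; this is compatible with the standing hypothesis {\bf (F)}, since {\bf (PS)} implies {\bf (F)}. In a predicative category with small maps satisfying {\bf (M)} and {\bf (PS)} the object $\spower(1)$ is a subobject classifier, which forces full separation in the initial $\spower$-algebra, while {\bf (PS)} forces the power set axiom there; hence, as in \cite{bergmoerdijk08}, $V$ is a model of {\bf IZF}.

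The corollary requires no genuinely new work beyond \reftheo{maintheorem} and the cited modelling results of \cite{bergmoerdijk08}; the only point demanding care is bookkeeping. One must check that the three invocations of \reftheo{maintheorem} refer to the \emph{same} class of small maps on $\shct{\ct{E}}{\ct{C}}$, which holds because that theorem produces this class uniformly, independently of the choice of ${\cal A}$; and one must use the implications {\bf (F)} $\Rightarrow$ {\bf ($\Pi$S)} and {\bf (PS)} $\Rightarrow$ {\bf (F)} to match the stated hypotheses to the three available forms of ${\cal A}$. All the genuine difficulty lies in \reftheo{maintheorem} itself.
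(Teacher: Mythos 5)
Your proposal is correct and takes essentially the same route the paper intends: the corollary is meant to be read off directly from \reftheo{maintheorem} together with \reftheo{existenceofinitZFalgebra} and the modelling results from \cite{bergmoerdijk08}, which is exactly what you do. Your bookkeeping points---that {\bf (F)} $\Rightarrow$ {\bf ($\Pi$S)} is needed to assemble the set ${\cal A} = \{{\bf (AMC), (\Pi S), (WS)}\}$ from the hypotheses of part (1), and that the class of small maps on $\shct{\ct{E}}{\ct{C}}$ is defined uniformly (pointwise = locally small, using the standing assumption {\bf (F)} in \ct{E}) and so is independent of which ${\cal A}$ one invokes---are precisely the small observations the paper leaves implicit.
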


\section{Presheaves}

In this section we show that predicative categories with small maps are closed under presheaves. More precisely, we show that if $(\ct{E}, \smallmap{S})$ is a predicative category with small maps and \ct{C} is an internal category in \ct{E}, then inside the category \pshct{\ct{E}}{\ct{C}} of internal presheaves one can identify a class of maps such that \pshct{\ct{E}}{\ct{C}} becomes a predicative category with small maps. Our argument proceeds in two steps. First, we need to identify a suitable class of maps in a category of internal presheaves. We take what we will call the pointwise small maps of presheaves. To prove that these pointwise small maps satisfy axioms {\bf (A1-9)}, we need to assume that the codomain map of \ct{C} is small (note that the same assumption was made in \cite{warren07}). Subsequently, we show that the validity in the category with small maps $(\ct{E}, \smallmap{S})$ of any of the axioms introduced in the previous section implies its validity in any category of internal presheaves over $(\ct{E}, \smallmap{S})$. To avoid repeating the convoluted expression ``the validity of axiom {\bf (X)} in a predicative category with small maps implies its validity in any category of internal presheaves over it'', we will write ``{\bf (X)} is inherited by presheaf models'' or ``{\bf (X)} is stable under presheaf extensions'' to express this.

The main result of this section is that the fullness axiom {\bf (F)} is stable under presheaf extensions. Most of the other stability results in this section are not really new and can in one form or another already be found in \cite{joyalmoerdijk95, moerdijkpalmgren00, moerdijkpalmgren02, gambino05, warren07}. Nevertheless, for several reasons, we have decided to include their proofs here. First of all, none of the references we mentioned uses conditions on the ambient category which are exactly the same as ours (in particular, we assume only bounded exactness). Secondly, these papers use different definitions of the class of small maps in presheaves, which we will compare in Section 3.2 below. And, thirdly, including them will make our presentation self-contained.

\subsection{Pointwise small maps in presheaves}

Throughout this section, we work in a predicative category with small maps $(\ct{E}, \smallmap{S})$ in which we are given an internal category \ct{C}, whose codomain map 
\[ \func{\rm cod}{\ct{C}_1}{\ct{C}_0} \]
is small. Here we have written $\ct{C}_0$ for the object of objects of \ct{C} and $\ct{C}_1$ for its object of arrows. In addition, we will write \pshct{\ct{E}}{\ct{C}} for the category of internal presheaves, and $\pi^*$ for the forgetful functor:
\[ \func{\pi^*}{\pshct{\ct{E}}{\ct{C}}}{\ct{E}/\ct{C}_0}. \]
In the sequel, we will use capital letters for presheaves and morphisms of presheaves, and lower case letters for objects and morphisms in \ct{C}.

We will also employ the following piece of notation. For any map of presheaves $F: Y \to X$ and element $x \in X(a)$, we set
\[ Y_x^M := \{ \, (f \in \ct{C}_1, y \in Y({\rm dom} f)) \, : \, {\rm cod}(f) = a, F(y) = x \cdot f \, \}. \]
(The capital letter $M$ stands for the maximal sieve on $b$: for this reason, this piece of notation is consistent with the one to be introduced in Section 4.4.) Occasionally, we will regard $Y^M_x$ as a presheaf: in that case, its fibre at $b \in \ct{C}_0$ is
\[ Y_x^M(b) = \{ \, (f: b \to a \in \ct{C}_1, y \in Y(b)) \, : \, F_b(y) = x \cdot f \, \}, \]
and the restriction of an element $(f, y) \in Y_x^M(b)$ along $g: c \to b$ is given by
\[ (f, y) \cdot g = (fg, y \cdot g). \]

A map of presheaves $F: Y \to X$ will be called \emph{pointwise small}, if $\pi^* F$ belongs to $\smallmap{S}/\ct{C}_0$ in $\ct{E}/\ct{C}_0$. Note that for any such pointwise small map of presheaves and for any $x \in X(a)$ with $a \in \ct{C}_0$ the object $Y^M_x$ will be small. This is an immediate consequence of the fact that the codomain map is assumed to be small.

\begin{theo}{smallmapsinpresh}
The pointwise small maps make \pshct{\ct{E}}{\ct{C}} into a category with small maps. 
\end{theo}
\begin{proof} Observe that finite limits, images and sums of presheaves are computed ``pointwise'', that is, as in $\ct{E}/\ct{C}_0$. The universal quantification of $A \subseteq Y$ along \func{F}{Y}{X} is given by the following formula: for any $a \in \ct{C}_0$,
\begin{labequation}{forallinpresh}
\begin{array}{lcl}
\forall_F (A)(a) & = & \{ \, x \in X(a) \, : \, \forall (f, y) \in Y^M_x \, (  y \in A   ) \, \} 
\end{array}
\end{labequation}%
This shows that \pshct{\ct{E}}{\ct{C}} is a positive Heyting category. To complete the proof, we need to check that the pointwise small maps in presheaves satisfy axioms {\bf (A1-9)}. We postpone the proof of the collection axiom {\bf (A7)} (it will be \refprop{collinpresh}). The remaining axioms follow easily, as all we need to do is verify them pointwise. For verifying axiom {\bf (A8)}, one observes that the universal quantifier in \refeq{forallinpresh} ranges over a small object.
\end{proof}

For most of the axioms that we introduced in Section 2, it is relatively straightforward to check that they are inherited by presheaf models. The exceptions are the representability, collection and fullness axioms: verifying these requires an alternative characterisation of the small maps in presheaves and they will therefore be discussed in a separate section.

\begin{prop}{someaxiomsstunderpresh}
The following axioms are inherited by presheaf models: {\bf (M)}, bounded exactness, {\bf (NE)} and {\bf (NS)}, as well as {\bf ($\Pi$E)}, {\bf ($\Pi$S)} and {\bf (PS)}.
\end{prop}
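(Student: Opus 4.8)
The plan is to split the list into two groups according to how the relevant constructions behave with respect to the forgetful functor $\pi^*\colon \pshct{\ct{E}}{\ct{C}}\to\ct{E}/\ct{C}_0$. For \textbf{(M)}, bounded exactness, \textbf{(NE)} and \textbf{(NS)} the verification is purely pointwise, building on the observation from the proof of \reftheo{smallmapsinpresh} that finite limits, images and sums of presheaves are computed as in $\ct{E}/\ct{C}_0$, and on \refrema{stabunderslicing}, which guarantees that all the axioms in question hold in every slice of $(\ct{E},\smallmap{S})$. Concretely: if $F\colon Y\to X$ is a monomorphism of presheaves then $\pi^*F$ is a monomorphism in $\ct{E}/\ct{C}_0$, hence small by \textbf{(M)}, so $F$ is pointwise small. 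For bounded exactness, a bounded equivalence relation $R\subseteq X\times X$ in presheaves restricts, for each $a\in\ct{C}_0$, to a bounded equivalence relation on $X(a)$; its quotient exists in $\ct{E}$, the transition maps of $X$ descend to these pointwise quotients to yield a presheaf $X/R$, and one checks directly (using only the pointwise universal property and the naturality of candidate maps $X\to W$) that $X/R$ is the quotient in $\pshct{\ct{E}}{\ct{C}}$ and that it is stable, since pullbacks of presheaves are pointwise and quotients of bounded relations in $\ct{E}$ are stable. Finally, the natural numbers object in presheaves is the constant presheaf $\Delta\NN$, and $\Delta\NN\to 1$ is pointwise small exactly because $\NN\to 1$ is small by \textbf{(NS)}.

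The axioms \textbf{($\Pi$E)}, \textbf{($\Pi$S)} and \textbf{(PS)} are slightly more involved, since dependent products and power class objects of presheaves are \emph{not} computed pointwise but are indexed, fibrewise over $x\in X(a)$, by the presheaf $Y^M_x$. The key point — and this is precisely where the hypothesis that $\mathrm{cod}\colon\ct{C}_1\to\ct{C}_0$ is small enters — is that whenever $F$ is pointwise small, each $Y^M_x$ is a small object of $\ct{E}$, being the sum, indexed by the small fibre of $\mathrm{cod}$ over $a$, of the small fibres of $\pi^*F$; this is already noted before \reftheo{smallmapsinpresh}. Granting this, I would argue as follows. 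For \textbf{($\Pi$E)}, recall the standard description of $\Pi_F(G)$ for pointwise small $F$ and arbitrary $G\colon Z\to Y$: its fibre over $x\in X(a)$ is the object of presheaf maps $Y^M_x\to Z$ lying over $Y$ (i.e.\ $F^*\mathbf{y}(a)=Y^M_x$ in the slice over $Y$), which sits as a bounded subobject of the exponential $Z^{Y^M_x}$ — and this exponential exists since $Y^M_x$ is small, hence exponentiable by \textbf{($\Pi$E)} in $\ct{E}$. For \textbf{($\Pi$S)}, when $G$ is moreover pointwise small this fibre is a bounded subobject of $\prod_{(f,y)\in Y^M_x} Z_y$, which is small by \textbf{($\Pi$S)} applied in $\ct{E}$ to the pullback of $Z$ along the small object $Y^M_x$, so by bounded separation (\refrema{bounded separation}) the fibre is small and $\Pi_F(G)\to X$ is pointwise small. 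For \textbf{(PS)}, use the analogous fibrewise description of the power class object $\slspower{X}(F)\to X$, whose fibre over $x\in X(a)$ is the object of small subpresheaves of $Y^M_x$: by \textbf{(PS)} in $\ct{E}$ the object of small subobjects of the small object $Y^M_x$ is small, ``being a subpresheaf'' is a bounded condition, and hence by bounded separation the fibre is small, so $\slspower{X}(F)\to X$ is pointwise small.

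I expect the main obstacle to be the second group, and within it the bookkeeping around \textbf{(PS)}: one must first pin down the correct internal descriptions of $\Pi_F$ and of $\slspower{X}$ in $\pshct{\ct{E}}{\ct{C}}$ — in particular the fact that these range over genuine presheaf morphisms out of $Y^M_x$ rather than over merely fibrewise maps — and then carry out the smallness estimates in the internal logic, invoking bounded separation together with the slice-stable forms of \textbf{($\Pi$S)} and \textbf{(PS)}. By contrast, once one has recorded that finite limits, images, sums and quotients of bounded relations in presheaves are pointwise, the first group of axioms reduces to a direct appeal to the corresponding axioms in $\ct{E}/\ct{C}_0$ and requires no new ideas.
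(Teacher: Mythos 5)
Your proposal is correct and follows essentially the same route as the paper: treat (M), bounded exactness, (NE) and (NS) by observing that the relevant constructions (monos, quotients of bounded relations, the natural numbers object) are computed pointwise in $\ct{E}/\ct{C}_0$; for ($\Pi$E), ($\Pi$S) and (PS) use the fibrewise descriptions indexed by the small object $Y^M_x$ and cut down by a bounded naturality (resp.\ subpresheaf) condition, invoking bounded separation. The paper's argument for (PS) is phrased via Gambino's global description of $\spower(X)$ in presheaves rather than explicitly in the slice, but the underlying smallness estimate — $\spower(Y^M_x)$ small by (PS) together with the bounded subpresheaf condition — is the same one you give.
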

\begin{proof}
The monomorphisms in presheaves are precisely those maps which are pointwise monic and therefore the axiom {\bf (M)} will be inherited by presheaf models. Similarly, presheaf models inherit bounded exactness, because quotients of equivalence relations are computed pointwise. Since the natural numbers objects in presheaves has that of the base category \ct{E} in every fibre, both {\bf (NE)} and {\bf (NS)} are inherited by presheaf models.

Finally, consider the following diagram in presheaves, in which $F$ is small:
\diag{ B \ar[d]_{G} & \\ Y \ar[r]_{F} & X. }
The object $P = \Pi_{F}(G)$ over an element $x \in X(a)$ is given by the formula:
\[
P_x := \{ \, s \in \Pi_{(f, y) \in Y^M_x} G^{-1} (y) \, : \, s \mbox{ is natural} \, \}. \]
This shows that {\bf ($\Pi$E)} is inherited by presheaf extensions. It also shows that {\bf ($\Pi$S)} is inherited, because the formula
\[ \forall (f, y) \in Y^M_x(b) \, \forall g: c \to b \, (  s(f, y) \cdot g = s(fg, y \cdot g)  ) \]
expressing the naturality of $s$ is bounded.

To see that {\bf (PS)} is inherited, we first need a description of the $\spower$-functor in the category of internal presheaves. This was first given by Gambino in \cite{gambino05} and works as follows. If $X$ is a presheaf and ${\bf y}c$ is the representable presheaf on $c \in \ct{C}_0$, then 
\[ \spower(X)(c) = \{ \, A \subseteq {\bf y}c \times X \, : \, A \mbox{ is a small subpresheaf}  \, \}, \]
with restriction along $f: d \to c$ on an element $A \in \spower(X)(c)$ defined by
\[ (A \cdot f)(e) = \{ (g: e \to d, x \in X(e)) \, : \, (fg, x) \in A \}. \]
The membership relation $\in_X \subseteq X \times \spower X$ is defined on an object $c \in \ct{C}$ by: for all $x \in X(c)$ and $A \in \spower(X)(c)$,
\[ x \in_X A \Longleftrightarrow (\id_c, x) \in A. \]
This shows that the axiom {\bf (PS)} is inherited, because the formula
\[ \forall (f: b \to c, x) \in A \,  \forall g: a \to b \, [ \, (fg, x \cdot g) \in A \, ] \]
expressing that $A$ is a subpresheaf is bounded.
\end{proof}

\begin{theo}{Winpresh}
The axioms {\bf (WE)} and {\bf (WS)} are inherited by presheaf extensions.
\end{theo}
\begin{proof}
For this proof we need to recall the construction of polynomial functors and W-types in presheaves from \cite{moerdijkpalmgren00}. For a morphism of presheaves $F: Y \to X$ and a presheaf $Z$, the value of
\[ P_{F}(Z) = \sum_{x \in X} Z^{Y_x} \]
on an object $a$ of $\ct{C}_0$ is given by
\[ P_F(Z)(a) = \{ \, (x \in X(a), t: Y^M_x \to Z) \}, \]
where $t$ is supposed to be a morphism of presheaves. The restriction of an element $(x, t)$ along a map $f: b \to a$ is given by $(x \cdot f, f^*t)$, where
\[ (f^*t)(g, y) = t(fg, y). \]

The presheaf morphism $F$ induces a map 
\[ \func{\phi}{\sum_{a \in \ct{C}_0} \sum_{x  \in X(a)} Y^M_x }{\sum_{a \in \ct{C}_0}  X(a)} \]
in \ct{E} whose fibre over $x \in X(a)$ is $Y^M_x$ and which is therefore small. The W-type in presheaves will be constructed from the W-type $V$ associated to $\phi$ in \ct{E}.

A typical element $v \in V$ is a tree of the form
\[ v = {\rm sup}_{x} t \]
where $x$ is an element of some $X(a)$ and $t$ is a function $Y^M_x \to V$. For any such $v$, one defines its root $\rho(v)$ to be $a$. If one writes $V(a)$ for the set of trees $v$ such that $\rho(v) = a$, the object $V$ will carry the structure of a presheaf, with the restriction of an element $v \in V(a)$ along a map \func{f}{b}{a} given by
\[ v \cdot f = {\rm sup}_{x \cdot f} f^{*}t. \]

The W-type associated to $F$ in presheaves is obtained by selecting the right trees from $V$, the right trees being those all whose subtrees are (in the terminology of \cite{moerdijkpalmgren00}) composable and natural. A tree $v = {\rm sup}_x(t)$ is called \emph{composable} if for all $(f, y) \in Y^M_x$,
\[ \rho(t(f, y)) = \mbox{dom}(f). \]
A tree $v = {\rm sup}_x(t)$ is \emph{natural}, if it is composable and for any $(f, y) \in Y^M_x(a)$ and any $g: b \to a$, we have
\[ t(f, y) \cdot g = t(fg, y \cdot g) \]
(so $t$ is actually a natural transformation). A tree will be called \emph{hereditarily natural}, if all its subtrees (including the tree itself) are natural.

In \cite[Lemma 5.5]{moerdijkpalmgren00} it was shown that for any hereditarily natural tree $v$ rooted in $a$ and map $f: b \to a$ in \ct{C}, the tree $v \cdot f$ is also hereditarily natural. So when $W(a) \subseteq V(a)$ is the collection of hereditarily natural trees rooted in $a$, $W$ is a subpresheaf of $V$.

A proof that $W$ is the W-type for $F$ can be found in the sources mentioned above. Presently, the crucial point is that the construction can be imitated in our setting, so that {\bf (WE)} is stable under presheaves. The same applies to {\bf (WS)}, essentially because $W$ was obtained from $V$ using bounded separation (in this connection it is essential that the object of all subtrees of a particular tree $v$ is small, see \cite[Theorem 6.13]{bergmoerdijk08}).
\end{proof}

\subsection{Locally small maps in presheaves}

For showing that the representability, collection and fullness axioms are inherited by presheaf models, we use a different characterisation of the small maps in presheaves: we introduce the \emph{locally small maps} and show that these coincide with the pointwise small maps. To define these locally small maps, we have to set up some notation.

\begin{rema}{leftadjoint}
The functor  \func{\pi^*}{\pshct{\ct{E}}{\ct{C}}}{\ct{E}/\ct{C}_0} has a left adjoint, which is computed as follows: to any object $(X, \sigma_X: X \to \ct{C}_0)$ and $a \in \ct{C}_0$ one associates
\[ \pi_!(X)(a) = \{ (x \in X, f: a \to b)  \, : \, \sigma_X(x) = b \}, \]
which is a presheaf with restriction given by
\[ (x, f) \cdot g = (x, fg). \]
This means that $\pi^* \pi_! X$ fits into the pullback square
\diag{ \pi^* \pi_! X \ar[d] \ar[r] & \ct{C}_1 \ar[d]^{\rm cod} \\
X \ar[r]_{\sigma_X} & \ct{C}_0. }
From this one immediately sees that $\pi_!$ preserves smallness. Furthermore, the component maps of the counit $\pi_! \pi^* \to 1$ are small covers (they are covers, because under $\pi^*$ they become split epis in $\ct{E}/\ct{C}_0$; that they are also small is another consequence of the fact that the codomain map is assumed to be small).
\end{rema}

In what follows, natural transformations of the form 
\[ \pi_! B \to \pi_! A \]
will play a crucial r\^{o}le and therefore it will be worthwhile to analyse them more closely. First, due to the adjunction, they correspond to maps in $\ct{E}/\ct{C}_0$ of the form
\[ B \to \pi^* \pi_! A. \]
Such a map is determined by two pieces of data: a map $r: B \to A$ in $\ct{E}$, and, for any $b \in B$, a morphism $s_b: \sigma_B(b) \to \sigma_A(rb)$ in \ct{C}, as depicted in the following diagram:
\diaglab{defshriek}{
B \ar[d]_r \ar[r]_s \ar@/^1pc/[rr]^{\sigma_B} & \ct{C}_1 \ar[d]^{\rm cod} \ar[r]_{\rm dom} & \ct{C}_0 \\
A \ar[r]_{\sigma_A} & \ct{C}_0.}
(Note that we do not have  $\sigma_A r = \sigma_B$ in general, so that it is best to consider $r$ as a map in \ct{E}.) We will use the expression $(r, s)$ for the map $B \to \pi^* \pi_! A$ and $(r, s)_!$ for the natural transformation $\pi_! B \to \pi_! A$ determined by a diagram as in \refdiag{defshriek}.

In the following lemma, we collect the important properties of the operation $(-,-)_!$.
\begin{lemm}{propofshriek}
\begin{enumerate}
\item Assume $r$ and $s$ are as in diagram \refdiag{defshriek}. Then $(r, s)_!: \pi_! B \to \pi_! A$ is a pointwise small map of presheaves iff $r: B \to A$ is small in $\ct{E}$.
\item Assume $r: B \to A$ is a cover and $\sigma_A: A \to \ct{C}_0$ is an arbitrary map. If we set $\sigma_B = \sigma_A r$ and $s_b = \id_{\sigma_B b}$ for every $b \in B$, then $(r, s)_!: \pi_! B \to \pi_!A$ is a cover.
\item If $(r, s): B \to \pi^* \pi_! A$ is a cover and $\sigma_B(b) = {\rm dom}(s_b)$ for all $b \in B$, then also $(r, s)_!: \pi_! B \to \pi_! A$ is a cover.
\item If $(r, s)_!: \pi_!B \to \pi_! A$ is a natural transformation determined by a diagram as in \refdiag{defshriek} and we are given a commuting diagram 
\diag{  V \ar[d]_h \ar[r]^p & B \ar[d]^{r} \\
 W \ar[r]_q & A}
in $\ct{E}$, then these data induce a commuting square of presheaves
\diag{  \pi_! V \ar[r]^{\pi_! p} \ar[d]_{(h, sp)_!} & \pi_! B \ar[d]^{(r, s)_!} \\
\pi_! W \ar[r]_{\pi_! q} & \pi_! A,}
with $\sigma_V = \sigma_B p$ and $\sigma_W = \sigma_A q$. Moreover, if the original diagram is a pullback (resp.~a quasi-pullback or a covering square), then so is the induced diagram. 
\item If $(r, s)_!: \pi_! A \to \pi_! X$ and $(u, v)_!: \pi_! B \to \pi_! X$ are natural transformations with the same codomain and for every $x \in X$ and every pair $(a, b) \in A \times_X B$ with $x = ra = ub$ there is a pullback square
\diag{ k_{(a, b)} \ar[d]_{p_{(a, b)}} \ar[r]^{q_{(a, b)}} & \sigma_B(b) \ar[d]^{v_b} \\
\sigma_A(a) \ar[r]_{s_a} & \sigma_X(x) }
in $\ct{C}$, then $\pi_!$ applied to the object $\sigma_{A \times_X B}: A \times_X B \to \ct{C}_0$ in $\ct{E}/\ct{C}_0$ obtained by sending $(a, b) \in A \times_X B$ to $k_{(a,b)}$ is the pullback of $(r, s)_!$ along $(u, v)_!$ in $\pshct{\ct{E}}{\ct{C}}$:
\diag{ \pi_!(A \times_X B)  \ar[r]^(.6){(\pi_2, q)_!} \ar[d]_{(\pi_1, p)_!} & \pi_! B \ar[d]^{(u, v)_!} \\
\pi_! A \ar[r]_{(r, s)_!} & \pi_!X. }
\end{enumerate}
\end{lemm}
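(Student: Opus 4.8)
The plan is to reduce every clause to a computation in \ct{E} by applying the forgetful functor $\func{\pi^*}{\pshct{\ct{E}}{\ct{C}}}{\ct{E}/\ct{C}_0}$, using three facts. First, by \refrema{leftadjoint} the object $\pi^*\pi_! X$ is the pullback of $\func{{\rm cod}}{\ct{C}_1}{\ct{C}_0}$ along $\sigma_X$; since ${\rm cod}$ is small and a split epi (via the identities), the resulting projection $\pi^*\pi_! X \to X$ is small and a cover. Second, a morphism of presheaves is a cover exactly when $\pi^*$ sends it to a cover (images of presheaves are computed as in $\ct{E}/\ct{C}_0$, cf.~the proof of \reftheo{smallmapsinpresh}), and is pointwise small exactly when $\pi^*$ sends it into $\smallmap{S}/\ct{C}_0$; also $\pi^*$ preserves and reflects finite limits, which are computed pointwise. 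Third, unwinding the adjunction gives the explicit formula $\pi^*(r, s)_! \colon (b, f) \mapsto (rb,\, s_b f)$, so in particular $\pi^*(r,s)_!$ is a morphism over $A$, via the projections to $B$ and $A$ followed by $r$. A consequence of the first two facts, used throughout: for $\func{\phi}{X}{X'}$ over $\ct{C}_0$ the map $\pi^*\pi_! \phi$ is, by pasting of pullback squares, a pullback of $\phi$, so $\pi_!$ preserves covers.

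Clause~(1) is the only part requiring more than a chase. For ``if'': if $r$ is small then $\pi^*\pi_! B \to B \to A$ is small, and this composite equals $\pi^*(r,s)_!$ postcomposed with the small projection $\pi^*\pi_! A \to A$; hence $\pi^*(r,s)_!$ is a morphism over $A$ between objects with small structure maps to $A$, and factoring it through its graph — a pullback of a diagonal, small by {\bf (A9)}, followed by a pullback of a small map — shows it is small, i.e.\ $(r,s)_!$ is pointwise small. For ``only if'': if $(r,s)_!$ is pointwise small then $\pi^*(r,s)_!$ is small, hence so is the composite $\pi^*\pi_! B \to B \to A$; since the projection $\pi^*\pi_! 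B \to B$ is a cover, {\bf (A6)} forces $r$ to be small.

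Clauses~(2) and~(3) assert that certain $(r,s)_!$ are covers, so it suffices to see that $\pi^*(r,s)_!$ is a cover. In~(2), with $s = \id$ and $\sigma_B = \sigma_A r$, the object $\pi^*\pi_! B$ is the pullback of $\pi^*\pi_! A \to A$ along $r$, and $\pi^*(r,s)_! = \pi^*\pi_! r$ is the corresponding base change of the cover $r$. In~(3), write $(r,s)_!$ as the counit $\pi_!\pi^*\pi_! A \to \pi_! A$ precomposed with $\pi_!(r,s)$; the hypothesis that $(r,s)$ is a cover — the condition ${\rm dom}(s_b) = \sigma_B(b)$ making it a map over $\ct{C}_0$ — gives that $\pi_!(r,s)$ is a cover since $\pi_!$ preserves covers, the counit is a cover by \refrema{leftadjoint}, and covers compose.

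For clauses~(4) and~(5), commutativity of the displayed square is immediate from the hypotheses ($rp = qh$ in~(4); $ra = ub$ and $s_a p_{(a,b)} = v_b q_{(a,b)}$ in~(5)) together with the formula for $\pi^*(r,s)_!$. For the universal property one computes pointwise: in~(4) the fibre of $\pi_! B \times_{\pi_! A} \pi_! W$ at $c$ consists of pairs $((b, f_1), (w, f_2))$ with $rb = qw$ and $f_2 = s_b f_1$, which matches the fibre of $\pi_! V$ at $c$ when $V = B\times_A W$ and $\sigma_V = \sigma_B p$, so the canonical comparison is an isomorphism; the quasi-pullback and covering-square variants then follow because the comparison $V \to B\times_A W$ is a cover over $\ct{C}_0$, $\pi_!$ preserves covers, and $\pi_! q$ is a cover by~(2). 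In~(5) the comparison $\pi_!(A\times_X B) \to \pi_! A \times_{\pi_! X}\pi_! B$ is an isomorphism because, fibrewise, pairs $(f_1 \colon c \to \sigma_A a,\ f_2 \colon c \to \sigma_B b)$ with $s_a f_1 = v_b f_2$ correspond bijectively to maps $c \to k_{(a,b)}$ by the universal property of the pullback squares defining the $k_{(a,b)}$ in \ct{C}. The main obstacle is clause~(1): arranging the ``if'' direction so that smallness of $r$ yields smallness of $\pi^*(r,s)_!$ via the graph factorisation and {\bf (A9)}, and noticing that $\pi^*\pi_! B \to B$ is a cover so that {\bf (A6)} applies for ``only if''; everything else is bookkeeping once the formula for $\pi^*(r,s)_!$ is available.
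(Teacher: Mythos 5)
Your proof is correct, and since the paper's own proof consists only of ``By direct inspection'', your write-up is essentially a careful execution of exactly that: you unwind everything through the formula $\pi^*(r,s)_!\colon(b,f)\mapsto(rb,s_bf)$ and do a pointwise verification in $\ct{E}/\ct{C}_0$. The graph factorisation via \textbf{(A9)} for the ``if'' direction of clause~(1) and the counit decomposition for clause~(3) are sensible ways to organise the inspection and introduce no genuine departure from the intended argument.
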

\begin{proof}
By direct inspection.
\end{proof}

Using the notation we have set up, we can list the two notions of a small map of presheaves.
\begin{enumerate}
\item The pointwise definition (as in the previous section): a map \func{F}{B}{A} of presheaves is \emph{pointwise small}, when $\pi^* F$ is a small map in $\ct{E}/\ct{C}_0$.
\item The local definition (as in \cite{joyalmoerdijk95}): a map \func{F}{B}{A} of presheaves is \emph{locally small}, when $F$ is covered by a map of the form $(r, s)_!$ in which $r$ is small in \ct{E}.
\end{enumerate}

We show that these two classes of maps coincide, so that henceforth we can use the phrase ``small map'' without any danger of ambiguity.
\begin{prop}{threecoincide}
A map is pointwise small iff it is locally small.
\end{prop}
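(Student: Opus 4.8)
The plan is to prove the two inclusions separately. The direction ``locally small $\Rightarrow$ pointwise small'' should be the easier one: if $F\colon B\to A$ is covered by $(r,s)_!\colon \pi_! B'\to\pi_! A'$ with $r$ small in \ct{E}, then applying $\pi^*$ to the covering square and using \reflemm{propofshriek}(1) together with the pullback descriptions in \refrema{leftadjoint}, one sees that $\pi^* F$ is covered (in $\ct{E}/\ct{C}_0$) by a small map; since $\ct{C}_1\to\ct{C}_0$ is small the relevant counit maps $\pi_!\pi^*\to 1$ are small covers (as recorded in \refrema{leftadjoint}), and by the descent axiom {\bf (A2)} in $(\ct{E},\smallmap{S})$ it follows that $\pi^*F$ is small, i.e.\ $F$ is pointwise small. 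One has to be a little careful that the square one obtains after applying $\pi^*$ is genuinely a covering square in $\ct{E}/\ct{C}_0$ and not merely a quasi-pullback, but \reflemm{propofshriek}(4) is designed exactly to handle this bookkeeping.

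\textbf{The main step.} The substantial direction is ``pointwise small $\Rightarrow$ locally small''. Here, given a pointwise small $F\colon B\to A$, I would first reduce to the case where the codomain is of the form $\pi_! A'$. Indeed, for any presheaf $A$ the counit $\varepsilon_A\colon \pi_!\pi^* A\to A$ is a small cover, so it suffices to show that the pullback $\varepsilon_A^* F$ is locally small and then conclude by the fact that ``locally small'' is manifestly closed under the relevant covering manipulations (a map covered by a locally small map is locally small, since covering squares compose). Thus we may assume $A=\pi_! A'$ with $A'=\pi^* A$ over $\ct{C}_0$. Now the task is to exhibit a covering square over $F$ whose left leg is some $(r,s)_!$ with $r$ small. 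The natural candidate is built from $\pi^* F\colon B'\to A'$ in $\ct{E}/\ct{C}_0$ (which is small by hypothesis): set $r=\pi^* F$, equip $B'$ with the structure map $\sigma_{B'}$ it already carries, and take $s_b=\id$. One then checks, using the adjunction and the explicit formula for $\pi_!$ in \refrema{leftadjoint} and the pullback computation in \reflemm{propofshriek}(5), that the evident comparison map from $\pi_! B'$ to $B$ (transpose of $B'\to \pi^*\pi_!\pi^*A=\pi^*A$, i.e.\ of $\pi^*F$) sits in a quasi-pullback square over $F$, and that the bottom map $\pi_!\pi^* A\to A$ is the counit, hence a cover. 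This produces the required covering square and shows $F$ is locally small.

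\textbf{Expected obstacle.} The technical heart — and the step I expect to cost the most care — is verifying that the comparison square just described is actually a \emph{covering} square, i.e.\ that the canonical map into the pullback is a cover of presheaves. Since covers of presheaves are computed pointwise, this unwinds to a statement in $\ct{E}/\ct{C}_0$ about the map $\pi_! B'\to B\times_A \pi_!\pi^*A$ being a pointwise cover, which in turn amounts to a diagram chase with the formula $\pi^*\pi_! X(a)=\{(x,f\colon a\to \sigma_X(x))\}$ and the fact that $B(a)\to A(a)$ has small (indeed, by smallness of $\mathrm{cod}$, the $Y^M_x$ are small) fibres over each $a$. Once this is set up correctly, everything else is ``by direct inspection'' in the spirit of \reflemm{propofshriek}, whose clauses were evidently tailored to make precisely this argument go through; I would organise the write-up so that the pointwise-to-locally direction is reduced, via a lemma on stability of ``locally small'' under covers, to the single square-is-covering verification, and then dispatch that by the pointwise computation.
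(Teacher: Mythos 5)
Your easy direction (``locally small $\Rightarrow$ pointwise small'') is essentially the paper's: apply $\pi^*$, use \reflemm{propofshriek}(1), and close up under covering using {\bf (A2)} and {\bf (A6)}. The structure of the hard direction is also recognisable as the paper's (use the counit to produce a covering square and then identify its left leg as an $(r,s)_!$ with $r$ small), but the specific square you propose is wrong, and the gap sits exactly at the point you flag as the ``expected obstacle.''

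Concretely, you propose to cover $F\colon B \to A$ by taking the counit $\pi_!\pi^*B \to B$ on top and the counit $\pi_!\pi^*A \to A$ on the bottom, and you assert that the resulting square is a quasi-pullback, i.e.\ that the comparison map $\pi_!\pi^*B \to B \times_A \pi_!\pi^*A$ is a cover, by a ``diagram chase'' using smallness of the fibres. This is false in general, and smallness of fibres is irrelevant to the failure. Over $c$, an element of $B \times_A \pi_!\pi^*A$ is a pair $(b \in B(c),\, (a,f))$ with $a \in A(d)$, $f\colon c \to d$ and $F(b) = a\cdot f$; to hit it from $\pi_!\pi^*B(c)$ you would need some $b' \in B(d)$ with $b' \cdot f = b$ and $F(b') = a$, i.e.\ you need restriction along $f$ to be surjective from $B(d) \times_{A(d)} \{a\}$ onto the fibre over $b$, which nothing guarantees. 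For a counterexample, take \ct{C} the poset $\{0<1\}$, $A$ with an element $a_1 \in A(1)$ restricting to $a_0 \in A(0)$, and $B$ with $B(1)=\emptyset$, $B(0)=\{b_0\}$, $F(b_0)=a_0$: then $(b_0, (a_1, 0\to 1))$ lies in the pullback but is not in the image, since $B(1)$ is empty. So your square is not a covering square and the argument as written does not go through. (Your preliminary reduction to $A = \pi_!A'$ does not repair this; applying your construction after the reduction still produces the same non-quasi-pullback.)

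The paper's \reflemm{thirdchar} gets around exactly this: it does not cover $B$ by $\pi_!\pi^*B$; it first forms the pullback $S = B \times_A \pi_!\pi^*A$ and then covers $S$ by the counit $\pi_!\pi^*S \to S$. Because the cover is applied to the pullback object itself, the outer square with top $\pi_!\pi^*S \to S \to B$ and left leg the composite $\pi_!\pi^*S \to S \to \pi_!\pi^*A$ is automatically a quasi-pullback, with no diagram chase needed; what requires an argument is only that the left leg is of the form $(k,l)_!$ with $k$ small, which follows because $\pi^*S \to B$ factors as a pullback of $\pi^*F$ followed by a pullback of the (small) codomain map. So the fix to your proof is to replace $\pi_!\pi^*B$ by $\pi_!\pi^*S$ throughout and check smallness of the $k$ produced, rather than trying to verify that your comparison map is a cover.
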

\begin{proof}
We have already observed that maps of the form $(r, s)_!$ with $r$ small are pointwise small, so all maps covered by one of this form are pointwise small as well. This shows that locally small maps are pointwise small. That all pointwise small maps are also locally small follows from the next lemma and the fact that the counit maps $\pi_! \pi^* Y \to Y$ are covers.
\end{proof}

\begin{lemm}{thirdchar}
For any pointwise small map \func{F}{Z}{Y} and any map \func{L}{\pi_! B}{Y} there is a quasi-pullback square of presheaves of the form
\diag{\pi_! C \ar[d]_{(k, l)_!} \ar[r] & Z \ar[d]^F \\
\pi_! B \ar[r]_L & Y, }
with $k$ small in \ct{E}.
\end{lemm}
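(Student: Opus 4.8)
The plan is to set $C = \pi^* W$, where $W = \pi_! B \times_Y Z$ is the pullback of $L$ and $F$ formed in $\pshct{\ct{E}}{\ct{C}}$ (which exists since, by \reftheo{smallmapsinpresh}, this category is a positive Heyting category), and to manufacture the required square out of the counit $\epsilon_W \colon \pi_! \pi^* W \to W$ of the adjunction $\pi_! \dashv \pi^*$ recalled in \refrema{leftadjoint}.

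First I would use that $\pi^*$, being a right adjoint, preserves finite limits, so that $\pi^* W$ is the pullback $\pi^* \pi_! B \times_{\pi^* Y} \pi^* Z$ and the first projection $\mathrm{pr}_1 \colon W \to \pi_! B$ becomes, after applying $\pi^*$, the pullback of $\pi^* F$ along $\pi^* L$. Since $F$ is pointwise small, $\pi^* F \in \smallmap{S}/\ct{C}_0$, so $\pi^*(\mathrm{pr}_1) \colon \pi^* W \to \pi^* \pi_! B$ is small by {\bf (A1)}. Composing it with the projection $\pi^* \pi_! B \to B$ — which, by the pullback square in \refrema{leftadjoint}, is a pullback of the codomain map $\ct{C}_1 \to \ct{C}_0$ and hence small by hypothesis and {\bf (A1)} — produces a small map $k \colon \pi^* W \to B$ in \ct{E} via {\bf (A5)}. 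Writing $l \colon \pi^* W \to \ct{C}_1$ for the composite of $\pi^*(\mathrm{pr}_1)$ with the projection $\pi^* \pi_! B \to \ct{C}_1$, this exhibits $\pi^*(\mathrm{pr}_1)$ as a pair $(k,l)$ in the sense of \refdiag{defshriek}, so that $(k,l)_! \colon \pi_! \pi^* W \to \pi_! B$ is defined and, by \reflemm{propofshriek}(1), pointwise small.

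Next I would postcompose $\epsilon_W$ with $\mathrm{pr}_1$ and $\mathrm{pr}_2$; the triangle identities then identify $\mathrm{pr}_1 \circ \epsilon_W$ with $(k,l)_!$ and $\mathrm{pr}_2 \circ \epsilon_W$ with the transpose $M \colon \pi_! \pi^* W \to Z$ of the projection $\pi^* W \to \pi^* Z$. Since $\mathrm{pr}_1$ and $\mathrm{pr}_2$ form a limit cone for $W = \pi_! B \times_Y Z$, the maps $(k,l)_!$ and $M$ fit into a commuting square with $L$ and $F$; and the canonical comparison map from $\pi_! \pi^* W$ into the pullback $\pi_! B \times_Y Z = W$ induced by this cone is, by the uniqueness clause of the universal property, exactly $\epsilon_W$ itself. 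By \refrema{leftadjoint} the components of $\epsilon_W$ are covers, so $\epsilon_W$ is a cover and the square is a quasi-pullback, which is what is needed (with $C = \pi^* W$ and $k$ small in \ct{E}). The step I expect to require the most care is precisely the bookkeeping that identifies the comparison map with the counit $\epsilon_W$ — once this is seen, the quasi-pullback condition comes for free rather than being verified by hand — together with keeping track of where the standing hypothesis that $\mathrm{cod} \colon \ct{C}_1 \to \ct{C}_0$ is small actually enters: it is needed for the factor $\pi^* \pi_! B \to B$ of $k$, not for the pullback of $\pi^* F$. The remaining verifications are routine manipulations of the $\pi_! \dashv \pi^*$ adjunction and of the pointwise descriptions of limits, images and the operation $(-,-)_!$ established earlier in the section.
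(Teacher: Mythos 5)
Your proof is correct and follows essentially the same route as the paper: form the pullback $W = \pi_!B \times_Y Z$ in presheaves, cover it with the counit $\pi_!\pi^*W \to W$, recognise the composite $\pi_!\pi^*W \to W \to \pi_!B$ as a map of the form $(k,l)_!$, and verify that $k$ is small by factoring it through the two pullbacks $\pi^*W \to \pi^*\pi_!B$ (of $\pi^*F$) and $\pi^*\pi_!B \to B$ (of the codomain map). Your write-up merely spells out explicitly, via the triangle identities and naturality of the counit, the identification that the paper states in one line (``the composite along the top is of the form $(k,l)_!$'') and draws as a diagram.
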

\begin{proof}
Let $S$ be the pullback of $F$ along $L$ and cover $S$ using the counit as in:
\diag{
\pi_! \pi^* S \ar@{->>}[r] & S \ar[r] \ar[d] & \pi_! B  \ar[d]^L  \\
&  Z \ar[r]_{F} & Y.}
We know the composite along the top is of the form $(k, l)_!$. Because $k$ is the composite along the middle of the following diagram and both squares in this diagram are pullbacks, $k$ is the composite of two small maps and hence small.
\diag{ & \ct{C}_1 \ar[r]^{\rm cod} & \ct{C}_0 \\
\pi^* S \ar[r] \ar[d] & \pi^* \pi_! B \ar[u] \ar[d]^{\pi^* L} \ar[r] & B \ar[u] \\
\pi^* Z \ar[r]_{\pi^* F} & \pi^* Y }
\end{proof}

\begin{coro}{pointwisecoveredbylocallysmall}
Every pointwise small map is covered by one of the form $(r, s)_!$ in which $r$ is small. In fact, every composable pair $(G, F)$ of pointwise small maps of presheaves fits into a double covering square of the form
\diag{ \pi_! C \ar[d]_{(k, l)_!} \ar@{->>}[r] & Z \ar[d]^G \\
\pi_! B \ar[d]_{(r, s)_!}  \ar@{->>}[r] & Y \ar[d]^F \\
\pi_! A \ar@{->>}[r] & X, }
in which $k$ and $r$ are small in \ct{E}.
\end{coro}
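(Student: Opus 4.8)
The plan is to obtain both assertions directly from \reflemm{thirdchar}, together with the observation recorded in \refrema{leftadjoint} that the components of the counit $\pi_!\pi^* Y \to Y$ are covers. The only additional ingredient needed is the elementary fact that, in a regular category, a quasi-pullback square whose bottom edge is a cover is automatically a covering square, and that in this situation its top edge is a cover as well: indeed the top edge factors through the genuine pullback of the bottom edge, and covers are stable under pullback and closed under composition. This is what justifies drawing epis along the top of the double square in the statement.

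For the first claim, I would take a pointwise small map $F\colon Z \to Y$ and apply \reflemm{thirdchar} with $L$ the counit $\pi_!\pi^* Y \to Y$. This yields a quasi-pullback square whose bottom edge is this counit and whose left edge has the form $(k,l)_!$ with $k$ small in $\ct{E}$; since the counit is a cover, the square is a covering square, so $F$ is covered by $(k,l)_!$. (This is, of course, just one direction of \refprop{threecoincide} restated.)

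For the second claim I would build the diagram from the bottom upwards. Given a composable pair of pointwise small maps $G\colon Z \to Y$ and $F\colon Y \to X$, first apply \reflemm{thirdchar} to $F$ with $L$ the counit $\pi_!\pi^* X \to X$; writing $A := \pi^* X$, this produces a covering square with bottom edge $\pi_! A \to X$, left edge $(r,s)_!\colon \pi_! B \to \pi_! A$ with $r$ small, right edge $F$, and some top edge $T\colon \pi_! B \to Y$ which, by the remark in the first paragraph, is a cover. Now apply \reflemm{thirdchar} a second time, this time to $G$ with $L := T\colon \pi_! B \to Y$. This produces a covering square with bottom edge $T$, left edge $(k,l)_!\colon \pi_! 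C \to \pi_! B$ with $k$ small, right edge $G$, and a top edge $\pi_! C \to Z$ which is again a cover. Stacking these two covering squares along their shared edge $T$ gives exactly the double covering square in the statement, with $k$ and $r$ small in $\ct{E}$.

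Since \reflemm{thirdchar} does all the real work, there is no serious obstacle; the one thing that needs care is the bookkeeping that makes the two squares fit together — feeding the top edge $T$ of the lower square back in as the base map $L$ for the second invocation of \reflemm{thirdchar} — together with the verification that a quasi-pullback over a cover has a cover for its opposite edge, so that the resulting picture genuinely is a double \emph{covering} square rather than merely two stacked quasi-pullbacks.
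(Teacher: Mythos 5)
Your proof is correct and takes essentially the same approach as the paper: the paper's one-line proof (``the second statement follows immediately from this and the previous lemma'') is exactly the two-step bottom-up construction you describe, applying \reflemm{thirdchar} first to $F$ along the counit and then to $G$ along the resulting top edge, together with the standard fact that a quasi-pullback over a cover is a covering square with cover on top.
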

\begin{proof}
We have just shown that every pointwise small map is covered by one of the form $(r, s)_!$ in which $r$ is small, which is the first statement. The second statement follows immediately from this and the previous lemma.
\end{proof}

Using this alternative characterisation, we can quickly show that the collection axiom is inherited by presheaf models, as promised.
\begin{prop}{collinpresh}
The collection axiom {\bf (A7)} is inherited by presheaf models.
\end{prop}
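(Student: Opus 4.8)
The plan is to reduce the statement about arbitrary pointwise small maps in presheaves to a statement about maps of the form $(r,s)_!$, where we can apply collection $\mathbf{(A7)}$ in the base category $\ct{E}$. So suppose we are given a cover $P \to X$ of presheaves and a pointwise small map $F: X \to A$ of presheaves; we must produce a covering square whose left leg $G$ is pointwise small and whose top-right composite factors through $P \to X$. First I would use \refcoro{pointwisecoveredbylocallysmall} (together with \reflemm{thirdchar}) to cover the composable pair by maps of the form $(r,s)_!$: there is a covering square $\pi_! B \twoheadrightarrow X$ lying over a covering square $\pi_! A' \twoheadrightarrow A$, with $(r,s)_!: \pi_! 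B \to \pi_! A'$ pointwise small and $r: B \to A'$ small in $\ct{E}$. Pulling the cover $P \to X$ back along $\pi_! B \twoheadrightarrow X$ gives a cover of $\pi_! B$; precomposing $\pi_! B \to X$ with this gives, after covering the pulled-back object again by a counit map $\pi_!\pi^*(-) \twoheadrightarrow (-)$, a cover of the form $(\text{something})_! \to \pi_! B$ refining $P$. The upshot is that it suffices to prove the collection property for a pointwise small map presented as $(r,s)_!: \pi_! B \to \pi_! A'$ with $r$ small, together with an arbitrary cover of $\pi_! B$.

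Next I would descend to $\ct{E}$. Given a cover $Q \twoheadrightarrow \pi_! B$ of presheaves, apply $\pi^*$ to obtain a cover $\pi^* Q \twoheadrightarrow \pi^* \pi_! B$ in $\ct{E}/\ct{C}_0$; recall from \refrema{leftadjoint} that $\pi^*\pi_! B$ sits in a pullback of $\mathrm{cod}$ along $\sigma_B$ and is in particular small over $B$ (using that $\mathrm{cod}$ is small). Composing, we get a cover $\pi^* Q \twoheadrightarrow \pi^*\pi_! B \to B$; call the composite $p$. Now apply collection $\mathbf{(A7)}$ in $\ct{E}$ to the cover $p$ and the small map $r: B \to A'$: this yields a covering square in $\ct{E}$
\[
\diag{ V \ar[d]_{h} \ar[r] & \pi^* Q \ar@{->>}[r]^(.45){p} & B \ar[d]^{r} \\ W \ar@{->>}[rr]_{q} & & A', }
\]
with $h$ small. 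I would then feed this covering square into part (4) of \reflemm{propofshriek}: applying $\pi_!$ turns it into a covering square of presheaves with left leg $(h, s')_!$ pointwise small (by part (1) of the same lemma, since $h$ is small), where $s'$ is the composite of the $s$-data of $B$ with the map $V \to B$. The top edge $\pi_! V \to \pi_! B$ factors through $\pi_! \pi^* Q \twoheadrightarrow Q$ and hence through the original cover $Q$, which is exactly what $\mathbf{(A7)}$ demands. Reassembling with the first paragraph's reduction (composing the two covering squares and using that a composite of covering squares is a covering square) gives the required covering square for the original data.

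The main obstacle I anticipate is bookkeeping rather than conceptual: one has to be careful that the cover $P \to X$ (resp. $Q \to \pi_! B$) is genuinely refined — not merely that some related map is a cover — which forces the two-stage covering-square argument (first replace $F$ by $(r,s)_!$ over a cover of $X$, absorbing $P$ into that cover via a counit map, then descend). The other delicate point is checking that the square produced by \reflemm{propofshriek}(4) really is a \emph{covering} square of presheaves and not just a quasi-pullback; this comes down to the fact that $\pi_!$ sends covers in $\ct{E}$ to covers (so $q$ being a cover makes $\pi_! q$ a cover) and that quasi-pullbacks are preserved, both of which are recorded in \reflemm{propofshriek}. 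With those two points handled, everything else is a diagram chase of the kind already invoked (e.g. in \reflemm{thirdchar}).
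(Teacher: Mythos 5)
Your overall strategy matches the paper's (reduce, via \refcoro{pointwisecoveredbylocallysmall}, to a small map of the form $(r,s)_!$; descend the cover to $\ct{E}$; apply {\bf (A7)} there; lift the resulting covering square via $\pi_!$ and \reflemm{propofshriek}.4). However, there is a genuine gap in the descent step.

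You pass from the cover $Q \twoheadrightarrow \pi_! B$ to a cover $p \colon \pi^*Q \twoheadrightarrow \pi^*\pi_! B \to B$ in $\ct{E}$ by composing with the projection $\pi^*\pi_! B \to B$. The problem is that this projection is \emph{not} a morphism over $\ct{C}_0$: an element of $\pi^*\pi_! B$ is a pair $(b, f \colon a \to \sigma_B(b))$ with structure map $\mathrm{dom}(f) = a$, whereas after projecting to $B$ the structure map becomes $\sigma_B(b) = \mathrm{cod}(f)$. Consequently, when you take a covering square from {\bf (A7)} over $p$ and feed it into \reflemm{propofshriek}.4, the structure map chosen for $V$ is $\sigma_B \circ (V \to B)$, and the composite $V \to \pi^*Q$ is then \emph{not} a map in $\ct{E}/\ct{C}_0$. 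So there is no map $\pi_! V \to \pi_!\pi^*Q$ to speak of, and the claimed factorisation of the top edge $\pi_! V \to \pi_! B$ through $Q$ does not follow. (Put differently: the adjoint of $\pi_! V \to \pi_! B$ lands in $\pi^*\pi_! B$ at identities, but the elements of $\pi^*Q$ that your $V$ sees are not constrained to lie over identities — the composition with the projection has discarded exactly this information.)

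The paper avoids this by taking the \emph{pullback} of $\pi^*Q$ along the unit $\eta_B \colon B \to \pi^*\pi_! B$, rather than composing with the projection. Unlike the projection, the unit \emph{is} a map over $\ct{C}_0$ (it sends $b$ to $(b, \id_{\sigma_B(b)})$, which has domain $\sigma_B(b)$). So the pullback $T = B \times_{\pi^*\pi_! B} \pi^*Q$ comes with \emph{both} legs $T \to B$ and $T \to \pi^*Q$ living in $\ct{E}/\ct{C}_0$; the first leg $n \colon T \to B$ is still a cover, and one then applies {\bf (A7)} to $n$ and $r$. After this change, \reflemm{propofshriek}.4 applies cleanly, and the top edge $\pi_! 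V \to \pi_! T \to \pi_! B$ genuinely factors through $\pi_! T \to \pi_!\pi^*E \to E \to \pi_! B$, i.e. through $Q$. The rest of your argument, and in particular the reduction in your first paragraph, goes through once this is fixed.
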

\begin{proof}
Let \func{F}{M}{N} be a small map and \func{Q}{E}{M} be a cover. Without loss of generality, we may assume that $F$ is of the form $(k, l)_!$ for some small map $\func{k}{X}{Y}$ in \ct{E}. 

Let $n$ be the map obtained by pullback in $\ct{E}/\ct{C}_0$:
\diag{ T \ar@{->>}[r]^n \ar[d] & X \ar[d] \\
\pi^* E \ar@{->>}[r]_{\pi^* Q} & \pi^* \pi_! X.}
Then use collection in \ct{E} to obtain a covering square as follows:
\diag{ B \ar[r]^m \ar[d]_d & T \ar@{->>}[r]^n & X \ar[d]^k  \\ 
A \ar@{->>}[rr]_p & & Y. }
Using \reflemm{propofshriek}.4 this leads to a covering square in the category of presheaves
\diag{ \pi_! B \ar[r]^{\pi_! m} \ar[d]_{(d, lnm)_!} & \pi_! T \ar[r] \ar@/^1pc/[rr]^{\pi_! n}  & E \ar@{->>}[r]_{Q} & \pi_! X \ar[d]^{(k, l)_!}  \\ 
\pi_! A \ar@{->>}[rrr]_{\pi_!p} & & & \pi_! Y, }
thus completing the proof.
\end{proof}

\begin{prop}{Rinpresh}
The representability axiom is inherited by presheaf models.
\end{prop}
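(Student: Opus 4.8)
The plan is to build a universal small map of presheaves directly out of a universal small map $\pi: E\to U$ in $\ct E$, using the ``local'' description of small maps from the previous subsection. First I would construct the classifying object: using {\bf ($\Pi$E)} together with the hypothesis that $\mathrm{cod}: \ct C_1\to\ct C_0$ is small, form the object $D_0$ over $\ct C_0$ whose elements over $c\in\ct C_0$ are pairs $(u,\tau)$ with $u\in U$ and $\tau$ a function from the (small) fibre $E_u$ into the (small) set $\{h\in\ct C_1 : \mathrm{cod}(h)=c\}$ of arrows of $\ct C$ with codomain $c$ — this object exists because {\bf ($\Pi$E)} lets us take $\Pi$ along the pullback of $\pi$ of the pullback of $\mathrm{cod}$, both over $\ct C_0\times U$. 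Put $\mathbb U=\pi_!D_0$, let $D_1=D_0\times_U E$ with $\sigma_{D_1}$ sending $(c,u,\tau,e)$ to $\mathrm{dom}(\tau(e))$, let $r_0: D_1\to D_0$ be the projection (a pullback of $\pi$, hence small) and $s_0: D_1\to\ct C_1$ send $(c,u,\tau,e)$ to $\tau(e)$; finally set $\mathbb E=\pi_!D_1$ and $\mathbb P=(r_0,s_0)_!: \mathbb E\to\mathbb U$. By \reflemm{propofshriek}.1 the map $\mathbb P$ is (pointwise) small, so it remains to check that every small map of presheaves is covered by a pullback of $\mathbb P$.

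So let $F: Y\to X$ be pointwise small. By the corollary just established it is covered by some $(r,s)_!: \pi_!B\to\pi_!A$ with $r: B\to A$ small in $\ct E$, and since the relation ``is covered by'' is transitive (covering squares paste, using that covers are stable under pullback) it suffices to cover $(r,s)_!$ by a pullback of $\mathbb P$. Applying the representability axiom in $\ct E$ to $r$ yields a cover $\bar p: A'\to A$, a map $\alpha: A'\to U$ and a covering square in $\ct E$ with $r'=\pi_1: B'=A'\times_U E\to A'$ on the left and $r$ on the right, say via $b': B'\to B$. Then \reflemm{propofshriek}.4 turns this into a covering square of presheaves with $(r',s\circ b')_!$ on the left (where $\sigma_{A'}=\sigma_A\bar p$ and $\sigma_{B'}=\sigma_B b'$) and $(r,s)_!$ on the right, so it remains to exhibit $(r',s\circ b')_!$ as a pullback of $\mathbb P$.

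For that, define $(\rho,\iota)_!: \pi_!A'\to\pi_!D_0=\mathbb U$ by letting $\rho(a')$ be the triple consisting of $\sigma_A(\bar p a')$, the point $\alpha(a')\in U$, and the function $e\mapsto s(b'(a',e))$ on $E_{\alpha(a')}$ — this lands in $D_0$ because $\mathrm{cod}(s(b'(a',e)))=\sigma_A(r(b'(a',e)))=\sigma_A(\bar p a')$ — and letting $\iota(a')=\mathrm{id}_{\sigma_A(\bar p a')}$, which is forced by the compatibility conditions of diagram \refdiag{defshriek}. Now \reflemm{propofshriek}.5 computes the pullback of $\mathbb P=(r_0,s_0)_!$ along $(\rho,\iota)_!$: the comparison squares it requires in $\ct C$ are all pullbacks of identity arrows and hence trivial, the resulting object $D_1\times_{D_0}A'$ is canonically identified with $B'=A'\times_U E$, and its two legs become exactly $r'$ and $s\circ b'$. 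Hence the pullback of $\mathbb P$ along $(\rho,\iota)_!$ is $(r',s\circ b')_!$, and pasting this pullback square with the covering square from \reflemm{propofshriek}.4 and the one from the corollary exhibits $F$ as fitting into the representability diagram, with left square covering and right square a pullback of $\mathbb P$.

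I expect the main obstacle to be identifying the correct classifying object $D_0$: one has to recognise that classifying a locally small map of presheaves amounts to choosing, in a $\ct C$-indexed way, a small index $u\in U$ together with an arrow of $\ct C$ into the base object for each element of the fibre $E_u$, and it is exactly this combination that forces the use of {\bf ($\Pi$E)} and of the smallness of the codomain map when forming $D_0$. Once $\mathbb P$ has the right shape, the remaining work is the essentially mechanical bookkeeping of chasing the covering square delivered by representability in $\ct E$ through \reflemm{propofshriek}.4--5; everything else reduces to the transitivity of covering squares and to the corollary already proved.
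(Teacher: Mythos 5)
Your proof is correct and follows essentially the same route as the paper: your $D_0$ and $D_1$ coincide with the paper's $U'$ and $E'$, the reduction via \refcoro{pointwisecoveredbylocallysmall} to a map of the form $(r,s)_!$ and the subsequent application of representability in $\ct E$ are the same, and the only variation is that you verify the pullback square by an explicit application of \reflemm{propofshriek}.5, whereas the paper obtains both the covering square and the pullback square at once via \reflemm{propofshriek}.4. This is a harmless difference in bookkeeping.
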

\begin{proof}
Let \func{\pi}{E}{U} be a universal small map in \ct{E}, and define the following two objects in $\ct{E}/\ct{C}_0$:
\begin{eqnarray*}
U' & = & \{ (u \in U, c \in \ct{C}_0, p: E_u \to \ct{C}_1) \, : \, \forall e \in E_u \, ({\rm cod}(pe) = c) \}, \\
\sigma_{U'}(u, c, p) & = & c, \\
E' & = & \{ (u, c, p, e) \, : \, (u, c, p) \in U', e \in E_u \}, \\
\sigma_{E'}(u, c, p, e) & = & {\rm dom}(pe). \\
\end{eqnarray*}
If $r: E' \to U'$ is the obvious projection and $s: E' \to \ct{C}_1$ is the map sending $(u, c, p, e)$ to $pe$, then $r$ and $s$ fit into a commuting square as shown:
\diag{ E' \ar[d]_{r} \ar[r]_s \ar@/^1pc/[rr]^{\sigma_{E'}} & \ct{C}_1 \ar[d]^{\rm cod} \ar[r]_{\rm dom} & \ct{C}_0 \\
U' \ar[r]_{\sigma_{U'}} & \ct{C}_0.}
We claim that the induced map $(r, s)_!$ in the category of presheaves is a universal small map. To show this, we need to prove that any small map $F$ can be covered by a pullback of $(r, s)_!$. Without loss of generality, we may assume that $F = (k, l)_!$ for some small map \func{k}{X}{Y} in $\ct{E}$.

Since $\pi$ is a universal small map, there exists a diagram of the form
\diag{ E \ar[d]_{\pi} & V \ar[l]_m \ar[d]_h \ar[r]^i & X \ar[d]^{k}  \ar[r]^l & \ct{C}_1 \ar[d]^{\rm cod} \\
U &  W \ar[l]^n \ar[r]_j & Y \ar[r]_{\sigma_Y} & \ct{C}_0, }
in which the left square is a pullback and the middle one a covering square. From this, we obtain a commuting diagram of the form
\diag{
& \ct{C}_0 \\
 V \ar[d]_h \ar[r]_{m'} \ar@/^1pc/[rr]^{li} \ar[ru]^{\sigma_V} & E' \ar[d]^{r} \ar[r]_s & \ct{C}_1 \ar[d]^{\rm cod} \ar[ul]_{\rm dom} \\
W \ar[r]^{n'} \ar@/_1pc/[rr]_{\sigma_W} & U' \ar[r]^{\sigma_U} & \ct{C}_0}
by putting
\begin{eqnarray*}
\sigma_W & = & \sigma_Y j, \\
n'w & = & (nw, \sigma_Ww, \lambda e \in E_{nw}. l_{im^{-1}e}), \\
\sigma_V & = & \sigma_X i,  \\
m'v & = & (n'hv, mv).
\end{eqnarray*}
Together these two commuting diagrams determine a diagram in the category of internal presheaves
\diag{ \pi_! E' \ar[d]_{(r, s)_!} & \pi_! V \ar[l]_{\pi_! m'} \ar[d]^{(h, li)_!} \ar[r]^{\pi_! i} & \pi_! X \ar[d]^{(k, l)_!}  \\
\pi_! U' & \pi_! W \ar[l]^{\pi_! n'} \ar[r]_{\pi_! j} & \pi_! Y, }
in which the left square is a pullback and the right one a covering square (by \reflemm{propofshriek}.4).
\end{proof}

\begin{theo}{stabFforpresh}
(Assuming \ct{C} has chosen pullbacks.) The fullness axiom {\bf (F)} is inherited by presheaf models.
\end{theo}
\begin{proof}
In view of \reflemm{redlemmforfull} and \refcoro{pointwisecoveredbylocallysmall}, we only need to build generic \emph{mvs}s for maps of the form $(k, l)_!: \pi_! B \to \pi_! A$ in which $k$ is small, where $\pi_! A$ lies over some object of the form $\pi_! X$ via a map of the form $(r, s)_!$ in which $r$ is small. To construct this generic \emph{mvs}, we have to apply fullness in \ct{E}. For this purpose, consider the object
\begin{eqnarray*}
B_0 & = & \{ (b \in B, f \in \ct{C}_1, g \in \ct{C}_1) \, : \, \sigma_X(rkb) = {\rm cod}(f), (f^*l_b) g = \id \}.
\end{eqnarray*}
Here $f^*l_b$ is understood to be the map fitting, for any $b \in B$ and $f: d \to c$ with $c = \sigma_X(rkb)$, in the double pullback diagram
\diag{ f^* \sigma_B(b) \ar[r] \ar[d]_{f^*l_b} & \sigma_B(b) \ar[d]^{l_b} \\
f^* \sigma_A(kb) \ar[r] \ar[d]_{f^*s_{kb}} & \sigma_A(kb) \ar[d]^{s_{kb}} \\ 
d \ar[r]_f & c }
in $\ct{C}$. If we write $k_0: B_0 \to A$ for the map sending $(b, f, g)$ to $k(b)$, then this map is small, so we can use fullness in \ct{E} to find a cover $n: W \to X$ and a small map $m_0: Z_0 \rTo W$, together with a generic \emph{mvs} $P_0$ for $k_0$ over $Z_0$, as depicted in the following diagram.
\diag{ P_0 \ar@{>->}[r] \ar@{->>}@/_/[dr]
 & Z_0 \times_{X} B_0 \ar[d] \ar[rr] & & B_0 \ar[d]^{k_0} \\
& Z_0 \times_{X} A \ar[rr] \ar[d] & & A \ar[d]^{r} \\
& Z_0 \ar[r]_{m_0}  & W \ar[r]_n & X }

Now we make a number of definitions:
\begin{eqnarray*}
Z & = & \{ \, (z_0 \in Z_0, f \in \ct{C}_1) \, : \, {\rm cod}(f) = nm_0(z_0) \mbox{ and } \\
&  & (\forall a \in A_{nm_0(z_0)}) \, (\exists b \in B_a) \, (\exists g \in \ct{C}_1) \, (z_0, b, f, g) \in P_0 \, \}, \\
\sigma_Z(z_0, f) & = & {\rm dom}(f), \\
\mu(z_0, f) & = & f, \\
\sigma_W(w) & = & \sigma_X(nw).
\end{eqnarray*}
In addition, we write $m_1: Z \to Z_0$ for the obvious (small) projection and $m = m_0m_1$. Then we obtain the following diagram of presheaves, in which both rectangles are pullbacks computed using \reflemm{propofshriek}.5:
\diag{ \pi_!(Z \times_X B) \ar[d] \ar[rr] & & \pi_! B \ar[d]^{(k, l)_!} \\
\pi_! (Z \times_X A) \ar[d] \ar[rr] & & \pi_! A \ar[d]^{(r, s)_!}  \\
 \pi_! Z \ar[r]_{(m, \mu)_!} & \pi_! W \ar[r]_{\pi_!n} & \pi_! X. }
We wish to define a subpresheaf of $\pi_!(Z \times_X B)$ and prove that it is the generic \emph{mvs} of $(k, l)_!$. We can do this by saying:
\begin{quote}
$(z_0, f, b, h) \in P$ if $h$ factors through a map $g$ with $(z_0, b, f, g) \in P_0$.
\end{quote}
The inclusion of $P$ in $\pi_!(Z \times_X B)$ is bounded, because $P$ is defined by a bounded formula (using that the codomain map is small). Furthermore, the induced map from $P$ to $\pi_!(Z \times_X A)$ is a cover by definition of $Z$. Thus it remains to verify genericity.

To verify this, let $E \to \pi_! W$ be any map and $Q$ be an \emph{mvs} of $(k, l)_!$ over $E$. Without loss of generality, we may assume that $E$ is of the form $\pi_! Y$ (since $E$ can always be covered using the counit). This leads to the following diagram of presheaves in which the rectangles are pullbacks:
\diag{ 
Q \ar@{>->}[r] \ar@{->>}@/_/[rd] & \bullet \ar[d] \ar[rr] & & \pi_! B \ar[d]^{(k, l)_!}  \\
 & \bullet \ar[d] \ar[rr] & & \pi_! A  \ar[d]^{(r, s)_!} \\
 & \pi_! Y  \ar[r]_{(d, \delta)_!}  &  \pi_! W \ar[r]_{\pi_!n} & \pi_! X. }
Of course, we will assume that the pullbacks are computed using \reflemm{propofshriek}.5, so that they are $\pi_!(Y \times_X B)$ and $\pi_!(Y \times_X A)$, respectively. It follows that in \ct{E} we have an \emph{mvs} $Q_0$ for $k_0$ over $Y$, as in
\diag{ Q_0 \ar@{>->}[r] \ar@{->>}@/_/[dr]
 & Y \times_{X} B_0 \ar[d] \ar[rr] & & B_0 \ar[d]^{k_0} \\
& Y \times_{X} A \ar[rr] \ar[d] & & A \ar[d]^{r} \\
& Y \ar[r]_{d}  & W \ar[r]_n & X, }
given by
\[ (y, b, f, g) \in Q_0 \mbox{ if }  f = \delta_y \mbox{ and } (y, b, g) \in Q. \]
Therefore, by the genericity of $P_0$, there is a cover  \func{e}{U}{Y} and a map  \func{c}{U}{Z_0} with $de = m_0 c$ and
\begin{equation} \label{inclofmvss}
c^*P_0 \subseteq e^*Q_0.
\end{equation}

\emph{Claim:} If we put $t(u) = (c(u), \delta_{e(u)})$, then $t(u) \in Z$ for every $u \in U$. \emph{Proof:} Suppose $a \in A_{nm_0c(u)}$. We know that there are $b \in B_a, f, g \in \ct{C}_1$ such that $(c(u), b, f, g) \in P_0$, because $P_0 \to Z_0 \times_X A$ is surjective, but the question is: do we have $f = \delta_{e(u)}$? The answer is \emph{yes}, because if $(c(u), b, f, g) \in P_0$, then $(e(u), b, f, g) \in Q_0$ by (\ref{inclofmvss}). So we have $f = \delta_{e(u)}$ by definition of $Q_0$. $\Box$

It follows that if we put 
\[ \sigma_U(u) := {\rm dom}(\delta_{e(u)}) = \sigma_Y(e(u)) = \sigma_Z(t(u)), \]
then we have the following diagram of presheaves:
\diag{ \pi_!U \ar[r]^{\pi_!t} \ar@{->>}[d]_{\pi_!e} & \pi_!(Z) \ar[d]^{(m, \mu)_!} \\
\pi_!Y \ar[r]_{(d, \delta)_!} & \pi_!W.}
To see that this square commutes, we need to chase an element from $\pi_!U$ along the two sides and it suffices to this for an element of the form $(u, \id)$.
\begin{eqnarray*}
(m, \mu)_!\pi_!t(u, \id) & = & (m, \mu)_!(t(u), \id) \\
& = & (m, \mu)_!(c(u), \delta_{e(u)}, \id) \\
& = & (m_0c(u), \delta_{e(u)}) \\
& = & (de(u), \delta_{e(u)}) \\
& = & (d, \delta)_!(e(u), \id) \\
& = & (d, \delta)_!\pi_!e(u, \id). 
\end{eqnarray*}
Therefore the proof will be finished, once we show that $(\pi_!t)^* P \subseteq (\pi_!e)^* Q$. 

To show this, consider an element $(u, b, h) \in (\pi_!t)^*P$. We then have $(t(u), b, h) \in P$, which, by definition of $P$, means that $h$ factors through a map $g$ such that $(c(u), b, \delta_{e(u)}, g) \in P_0$. From (\ref{inclofmvss}) it follows that $(e(u), b, \delta_{e(u)}, g) \in Q_0$ and hence $(e(u), b, g) \in Q$, by definition of $Q_0$. Since $Q$ is a presheaf, we also have $(e(u), b, h) \in Q$, whence $(u, b, h) \in (\pi_!e)^*Q$, as desired.
\end{proof}

\begin{rema}{fullnesspreshinternal}
Diagrammatic proofs as the one we just gave are hard to read and motivate. One can give a more understandable proof using the internal logic of the category of presheaves: for those who are familiar with its intricacies, we present such a proof below.

\begin{theo}{fullnesspreshinternallogic}
(Assuming \ct{C} has chosen finite products.) The fullness axiom {\bf (F)} is inherited by presheaf models.
\end{theo}
\begin{proof}
In view of \reflemm{redlemmforfull} and \refcoro{pointwisecoveredbylocallysmall} we only need to find generic \emph{mvs}s for small map $ (k, l)_!: \pi_!B \to \pi_!A$, where $\pi_!A$ is fibred by a small map $(r, s)_!: \pi_! A \to \pi_!X$ over $\pi_!X$. Then, by replacing $\ct{C}$ by $\ct{C}/ \pi_!X = \sum_{x \in X} \ct{C}/ \sigma_X(x)$, we may even assume that $X = 1 = \{ * \}$ and $\sigma_X(*) = 1$.  

Internal universal quantification over \emph{mvs}s of $\pi_!B$ in the category $\pshct{\ct{E}}{\ct{C}}$ amounts to $\ct{E}$-internal quantification over certain subpresheaves $P$ of $\pi_!(B) \times \ct{C}(-,c)$, namely those which are \emph{mvs}s over $\pi_!(A) \times \ct{C}(-,c)$. Such a subpresheaf satisfies
\begin{displaymath}
\begin{array}{c}
 (\forall a \in A) \, (\forall h: d \to \sigma_A(a)) \, (\forall f: d \to c) \, (\exists b \in B) \, (\exists g: d \to \sigma_B(b)) \\  \, (k(b), l_b \circ g) = (a, h) \mbox{ and } (b, g, f) \in P \subseteq \pi_!(B) \times \ct{C}(-,c) \mbox{ at } d,
\end{array}
\end{displaymath}
which is equivalent to
\begin{displaymath}
\begin{array}{c}
 (\forall a \in A) \, (\exists b \in B) \, (\exists g: \sigma_A(a) \times c \to \sigma_B(b)) \\  \, (k(b), l_b \circ g) = (a, \pi_1) \mbox{ and } (b, g, \pi_2) \in P \subseteq \pi_!(B) \times \ct{C}(-,c) \mbox{ at } \sigma_A(a) \times c,
\end{array}
\end{displaymath}
or
\begin{equation}  \label{mvsinpreschoven}
\begin{array}{c}
 (\forall a \in A) \, (\exists b \in B) \, (\exists g: \sigma_A(a) \times c \to \sigma_B(b)) \\  \, k(b) = a, l_b \circ g = \pi_1 \mbox{ and } (b, g, \pi_2) \in P(\sigma_A(a) \times c).
\end{array}
\end{equation}
We use fullness in \ct{E} to obtain a small family of subobjects $\{ Q_i \, : \, i \in I \}$ which form a generic family of \emph{mvs}s for
\[ \{ (b \in B, c \in \ct{C}_0, g: \sigma_A(kb) \times c \to \sigma_B(b)) \, : \, l_b \circ g = \pi_1 \} \to A: (b, c, g) \mapsto k(b). \]
From these $Q_i$ we now construct an internal small family of small presheaves of $\pi_!(B)$. Such a family is generated by subpresheaves of $\pi_!(B) \times \ct{C}(-, c)$ for varying $c \in \ct{C}_0$. For any such $c \in \ct{C}_0$, we take the presheaves
\begin{eqnarray*}
\hat{Q}_i^{(c)}(d)  & = & \{ (b \in B, gh, \pi_2h) \, : \,  g: \sigma_A(kb) \times c \to \sigma_B(b), h : d \to \sigma_A(kb) \times c \\
& & \mbox{ and } (b, c, g) \in Q_i \},
\end{eqnarray*}
provided $i$ and $c$ make the map $\hat{Q}_i^{(c)} \to \pi_!(A) \times \ct{C}(-, c)$ surjective.

Now we show these are generic. Take $c_0 \in \ct{C}$ and $P$ a \emph{mvs} over $c_0$, as in
\diag{ P \ar@{ >->}[r] \ar@{->>}[dr] & \pi_!(B) \times \ct{C}(-, c_0) \ar[d]^{(k, l)_! \times \id} \\
& \pi_!(A) \times \ct{C}(-, c_0). }
This $P$ satisfies (\ref{mvsinpreschoven}) for $c_0$, so there is a $Q_i$ contained in
\[ \{ (b \in B, c_0, g: \sigma_A(kb) \times c_0 \to \sigma_B(b)) \, : \, (b, g, \pi_2) \in P, l_b \circ g = \pi_1 \}. \]
We claim that the map $\hat{Q}_i^{(c_0)} \to \pi_!(A) \times \ct{C}(-, c_0)$ is surjective, and to show this it suffices to prove that elements of the form $(a, \pi_1: \sigma_A(a) \times c_0 \to c_0, \pi_2: \sigma_A(a) \times c_0 \to \sigma_A(a))$ are hit by this map. Since $Q_i$ is a \emph{mvs}, we know that there are $(b, c, g) \in Q_i$ with $k(b) = a$ and $l_b \circ g = \pi_1$. Since $Q_i \subseteq P$, we have $c = c_0$ and hence $(b, g, \pi_2) \in \hat{Q}_i^{(c_0)}$ and $((k, l)_! \times \id)(-, c_0)(b, g, \pi_2) = (a, \pi_1, \pi_2)$.

So it remains to check $\hat{Q}_i^{(c_0)} \subseteq P$. But if $(b, gh, \pi_2h) \in Q_i^{(c_0)}(d)$ for some maps $h: d \to \sigma_A(kb) \times c$ and $g: \sigma_A(kb) \times c \to \sigma_B(b)$ with $(b, c_0, g) \in Q_i$, then $(b, g, \pi_2) \in P(\sigma_A(kb) \times c)$ and hence $(b, gh, \pi_2h) = (b, g, \pi_2) \cdot h \in P(d)$.
\end{proof}

\end{rema}

\section{Sheaves}

In this section we continue to work in the setting of a predicative category with small maps $(\ct{E}, \smallmap{S})$ together with an internal category \ct{C} in \ct{E} whose codomain map is small. To define a category of internal sheaves, we have to assume that the category \ct{C} comes equipped with a Grothendieck topology, so as to become a Grothendieck site. There are different formulations of the notion of a site, all essentially equivalent (\cite{johnstone02b} provides an excellent discussion of this point), but for our purposes we find the following (``sifted'') formulation the most useful. 

\begin{defi}{site}
Let \ct{C} be an internal category whose codomain map in small. A \emph{sieve} $S$ on an object $a \in \ct{C}_0$ is a \emph{small} collection of arrows in \ct{C} all having codomain $a$ and closed under precomposition (i.e., if $f: b \to a$ and $g: c \to b$ are arrows in \ct{C} and $f$ belongs to $S$, then so does $fg$). Since we insist that sieves are small, there is an object of sieves (a subobject of $\spower \ct{C}_1$).

We call the set $M_a$ of all arrows into $a$ the \emph{maximal sieve} on $a$ (it is a sieve, since we are assuming that the codomain map is small). If $S$ is a sieve on $a$ and $f: b \to a$ is any map in \ct{C}, we write $f^*S$ for the sieve $\{ g: c \to b \, : \, fg \in S \}$ on $b$. In case $f$ belongs to $S$, we have $f^*S = M_b$.

A \emph{(Grothendieck) topology} Cov on \ct{C} is given by assigning to every object $a \in \ct{C}$ a collection of sieves ${\rm Cov}(a)$ such that the following axioms are satisfied:
\begin{description}
\item[(Maximality)] The maximal sieve $M_a$ belongs to ${\rm Cov}(a)$;
\item[(Stability)] If $f: b \to a$ is any map and $S$ belongs to ${\rm Cov}(a)$, then $f^*S$ belongs to ${\rm Cov}(b)$;
\item[(Local character)] If $S$ is a sieve on $a$ and $R \in {\rm Cov}(a)$ is such that for all $f: b \to a \in R$ the sieve $f^*S$ belongs to ${\rm Cov}(b)$, then $S$ belongs to ${\rm Cov}(a)$. 
\end{description}
A pair $(\ct{C}, {\rm Cov})$ consisting of a category $\ct{C}$ and a topology Cov on it is called a \emph{site}. If a site $(\ct{C}, {\rm Cov})$ has been fixed, we call the sieves belonging to some ${\rm Cov}(a)$ \emph{covering sieves}. If $S$ belongs to ${\rm Cov}(a)$ we say that $S$ is a \emph{sieve covering} $a$, or that $a$ \emph{is covered by} $S$.

Finally, a \emph{presentation} for a site $(\ct{C}, {\rm Cov})$ is a function BCov which yields, for every $a \in \ct{C}_0$, a \emph{small} collection of \emph{basic covering sieves} BCov($a$) such that:
\[ S \in {\rm Cov}(a) \Leftrightarrow \exists R \in {\rm BCov}(a): R \subseteq S. \] 
A site for which such a presentation exists will be called \emph{presentable}.\footnote{This is supposed to be reminiscent of Aczel's notion of a set-presentable formal space (see \cite{aczel06}). Note that in {\bf IZF} every site is presentable.}
\end{defi}

Our first goal in this section is prove that any category of internal sheaves over a predicative category with small maps $(\ct{E}, \smallmap{S})$ is a positive Heyting category. The proof of this relies on the existence of a sheafification functor (a left adjoint to the inclusion of sheaves in presheaves), and since this functor is built by taking a quotient, we use the bounded exactness of $(\ct{E}, \smallmap{S})$. To ensure that the equivalence relation by which we quotient is bounded, we will have to assume that the site is presentable. Next, we have to identify a class of small maps in any category of internal sheaves over $(\ct{E}, \smallmap{S})$. We will define pointwise small and locally small maps of sheaves and we will insist that these should again coincide (as happened in presheaves). For this to work out, we again seem to need the assumption that the site is presentable; moreover, we will assume that the fullness axiom holds in \ct{E} (note that similar assumptions were made in \cite{grayson83}). So, in effect, we will work in a predicative category with small maps $(\ct{E}, \smallmap{S})$ equipped with a Grothendieck site $(\ct{C}, {\rm Cov})$ such that:
\begin{enumerate}
\item The fullness axiom {\bf (F)} holds in \ct{E}.
\item The codomain map ${\rm cod}: \ct{C}_1 \to \ct{C}_0$ is small.
\item The site is presentable.
\end{enumerate}

After we have shown that a category of sheaves can be given the structure of a category with small maps, we prove that the validity of any of the axioms introduced in Section 2 in $(\ct{E}, \smallmap{S})$ implies its validity in any category of internal sheaves over it (Theorems 4.8--4.11 and Theorem 4.17): we will say that the axiom is ``inherited by sheaf models''. There is one exception to this, however: we will not be able to show that the axiom {\bf (WS)} is inherited by sheaf models. We will discuss the problem and provide a solution based on the axiom of multiple choice in Section 4.4 below (see \reftheo{stabilityofAMCundersheaves} and \reftheo{stabilityofWSundersheaves}). 

The main results of this section are that we establish the stability of fullness {\bf (F)} under sheaves and we correct the treatment of W-types in \cite{moerdijkpalmgren02}. In addition, we show that the two different notions of a class of small maps that occur in the literature coincide in our setting. As far as the basic axioms are concerned, their stability can in one form or another already be found in the literature (see \cite{joyalmoerdijk95, moerdijkpalmgren02, gambino08, awodeygambinolumsdainewarren09}). In particular, we should point out that \cite{awodeygambinolumsdainewarren09} establishes the more general result that they are stable under sheaves for a Lawvere-Tierney topology. Nevertheless, it is not quite true that our results are a special case of theirs, because, to achieve this generality, they work in a setting which has full (not just bounded) exactness. In addition, as we already mentioned in the introduction, it is not true that the fullness axiom {\bf (F)} is stable under sheaves for a Lawvere-Tierney topology.

\subsection{Sheafification}

Our next theorem shows the existence of a sheafification functor, a Cartesian left adjoint to the inclusion of sheaves in presheaves. The proof relies in an essential way on the assumption of bounded exactness and on the fact that our site is presentable. 

\begin{theo}{sheafiffunctor}
The inclusion
\diag{ i_*: \shct{\ct{E}}{\ct{C}} \ar@{>->}[r] & \pshct{\ct{E}}{\ct{C}} } 
has a Cartesian left adjoint $i^*$ (a ``sheafification functor'').
\end{theo}
\begin{proof}
We verify that it is possible to imitate the standard construction (see \cite[Section III.5]{maclanemoerdijk92}).

Let $P$ be a presheaf. A pair $(R, x)$ will be called a \emph{compatible family} on $a \in \ct{C}_0$, if $R$ is a covering sieve on $a$, and $x$ specifies for every $\func{f}{b}{a} \in R$ an element $x_f \in P(b)$, such that for any $\func{g}{c}{b}$ the equality $(x_f) \cdot g = x_{fg}$ holds. Because {\bf ($\Pi$E)} holds and sieves are small, by definition, there is an object of compatible families. Actually, the compatible families form a presheaf Comp($P$) with restriction given by
\[ (R, x) \cdot f = (f^*R, x \cdot f), \]
where $(x \cdot f)_g = x_{fg}$.

We define an equivalence relation on Comp($P$) by declaring two compatible families $(R,x)$ and $(T, y)$ on $a$ equivalent, when there is a covering sieve $S \subseteq R \cap T$ on $a$ with $x_f = y_f$ for all $f \in S$. Since the site is assumed to be presentable, this quantification over the (large) collection of covering sieves $S$ on $a$, can be replaced with a quantification over the small collection of basic covering sieves on $a$. Therefore the equivalence relation is bounded and has a quotient $P^+$. This object $P^+$ is easily seen to carry a presheaf structure in such a way that the quotient map ${\rm Comp}(P) \to P^+$ is a morphism of presheaves.

First claim: $P^+$ is separated. Proof: Suppose two elements $[R, x]$ and $[S, y]$ of $\psh{P}^+(a)$ agree on a cover $T$. Pick representatives $(R, x)$ and $(S, y)$, and define:
\[ Q = \{ \func{f}{b}{a} \in R \cap S \, : \, x_f = y_f \}. \]
Once we show that $Q$ is covering, we are done. But this follows immediately from the local character axiom for sites: for any $f \in T$, the sieve $f^*Q$ is covering, by assumption.

Second claim: when $P$ is separated, $P^+$ is a sheaf. Proof: Let $R$ be a covering sieve on $a$, and let compatible elements $p_f \in P^+(b)$ be given for every $\func{f}{b}{a} \in R$. Using the collection axiom, we find for every $f \in R$ a family of representatives $(R^{(f, i)}, x^{(f, i)})$ of $p_f$, with the variable $i$ running through some inhabited and \emph{small} index set $I_f$. Therefore
\[ S = \{ \,f \circ g \, : \, f \in R, i \in I_f, g \in R^{(f, i)} \, \} \]
is small; in fact, it is a covering sieve, by local character.

We now prove that for any two triples $(f \in R, i \in I_f, g \in R^{(f, i)})$ and $(f' \in R, i' \in I_{f'}, g' \in R^{(f', i')})$ with $fg = f'g'$, we must have $x^{(f, i)}_g = x^{(f', i')}_{g'}$. Since the elements $p_f$ are assumed to be compatible, the equality
\[ [R^{(f, i)}, x^{(f, i)}] \cdot g = p_{fg} = p_{f'g'} = [R^{(f', i')}, x^{(f', i')}] \cdot g' \]
holds. Hence the elements $x^{(f, i)}_g$ and $x^{(f', i')}_{g'}$ agree on a covering sieve. Since $P$ is assumed to be separated, this implies that the elements $x^{(f, i)}_g$ and $x^{(f', i')}_{g'}$ are in fact identical.

This argument shows that the definition $z_{fg} = x^{(f, i)}_g$ is unambiguous for $fg \in S$, and also that $(S, z)$ is a compatible family. As its equivalence class $[S, z]$ is the glueing of the family $p_f$ we started with, the second claim is proved.

From the construction it is clear that for any presheaf $P$ the sheaf $P^{++}$ has to be its sheafification. So we have shown that the construction of the sheafification functor carries through in the setting we are working in; that this assignment is moreover functorial as well as Cartesian is proved in the usual manner.
\end{proof}

\begin{theo}{sheavesHeytingcat}
\shct{\ct{E}}{\ct{C}} is a positive Heyting category.
\end{theo}
\begin{proof}
The category of sheaves has finite limits, because these are computed pointwise, as in presheaves. Using the following description of images and covers in categories of sheaves, one can easily show these categories have to be regular: the image of a map \func{F}{Y}{X} of sheaves consists of those $x \in X(a)$ that are ``locally'' hit by $F$, i.e., for which there is a sieve $S$ covering $a$ such that for any $\func{f}{b}{a} \in S$ there is an element $y \in Y(b)$ with $F(y) = x \cdot f$. Therefore a map \func{F}{Y}{X} is a cover, if for every $x \in X(a)$ there is a sieve $S$ covering $a$ and for any $f: b \to a \in S$ an element $y \in Y(b)$ such that $F(y) = x \cdot f$ (such maps are also called \emph{locally surjective}).

The Heyting structure in sheaves is the same as in presheaves, so the universal quantification of $A \subseteq Y$ along \func{F}{Y}{X} is given by the formula \refeq{forallinpresh}. Indeed, from this description it is readily seen that belonging to $\forall_F (A)$ is a local property.

The sums in sheaves are obtained by sheafifying the sums in presheaves. They are still disjoint and stable, because the sheafification functor is Cartesian.
\end{proof}

\subsection{Small maps in sheaves}

We will now define two classes of maps in the categories of sheaves, those which are pointwise small and those which are locally small. Using that {\bf (F)} holds in \ct{E} and the fact that the site is presentable, we will then show that they coincide. But before we define these two classes of maps, note that we have the following diagram of functors:
\diag{ \ct{E}/\ct{C}_0 \ar@<.5ex>[rr]^{\pi_!} \ar@<.5ex>[dr]^{\rho_!} & & \pshct{\ct{E}}{\ct{C}} \ar@<.5ex>[ll]^{\pi^*} \ar@<-.5ex>[dl]_{i^*} \\
& \shct{\ct{E}}{\ct{C}}, \ar@<.5ex>[lu]^{\rho^*} \ar@<-.5ex>[ur]_{i_*} & }
where the maps $\rho^*$ and $\rho_!$ are defined as the composites of $\pi$ and $i$ via the diagram. So $\rho^*$ is the forgetful functor, $\rho_!$ is defined as
\[ \rho_! X = i^* \pi_! X, \]
and they are adjoint. It follows immediately from the maximality axiom for sites that the components of the counit $\rho_! \rho^* \rTo 1$ are covers.

One final remark before we give the definitions. We have seen that any pair of maps $(r, s)$ in \ct{E} making
\diag{
B \ar[d]_r \ar[r]_s \ar@/^1pc/[rr]^{\sigma_B} & \ct{C}_1 \ar[d]^{\rm cod} \ar[r]_{\rm dom} & \ct{C}_0 \\
A \ar[r]_{\sigma_A} & \ct{C}_0.}
commute determines a map $(r, s)_!: \pi_! B \to \pi_! B$ of presheaves. Therefore it also determines a map $i^* (r, s)_!: \rho_! B \to \rho_! A$ of sheaves, but note that now not all maps $\rho_! B \to \rho_! A$ will be of this form, in contrast to what happened in the presheaf case.

Finally, the two classes of maps are defined as:
\begin{enumerate}
\item The pointwise definition: a morphism \func{F}{B}{A} of sheaves is \emph{pointwise small}, when $\rho^* F$ is a small map in $\ct{E}/\ct{C}_0$.
\item The local definition (as in \cite{joyalmoerdijk95}):  a morphism \func{F}{B}{A} of sheaves is \emph{locally small} in case it is covered by a map of the form $i^*(r, s)_!$ where $r$ is a small map in \ct{E}.
\end{enumerate}
That these two classes of maps coincide will follow from the next two propositions, both whose proofs use the fullness axiom.

\begin{prop}{sheafpressmallness}
The sheafification functor $i^*$ preserves pointwise smallness: if $F$ is a (pointwise) small map of presheaves, then $i^* F$ is a pointwise small map of sheaves. 
\end{prop}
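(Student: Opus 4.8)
The plan is to reduce everything to the one--step plus construction of \reftheo{sheafiffunctor} and then iterate. Since the pointwise small maps make $\pshct{\ct{E}}{\ct{C}}$ into a category with small maps (\reftheo{smallmapsinpresh}) and $i^{*} = (-)^{++}$, it is enough to show that $(-)^{+}$ sends pointwise small maps of presheaves to pointwise small maps of presheaves; applying this to $F$ and then to $F^{+}$ yields that $i^{*}F = F^{++}$ is pointwise small. So from now on let $F\colon B \to A$ be pointwise small, and let us show the same for $F^{+}\colon B^{+} \to A^{+}$.

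First I would set up the right ``small'' versions of the compatible--family presheaves. Because the site is presentable, every element of $B^{+}(a)$ is represented by a compatible family $(R_{0},x)$ whose sieve $R_{0}$ is \emph{basic}; hence the quotient map $q_{0}\colon \mathrm{Comp}'(B) \to B^{+}$, where $\mathrm{Comp}'(B)$ is the subpresheaf of $\mathrm{Comp}(B)$ consisting of compatible families indexed by basic covering sieves, is pointwise surjective, i.e.\ a cover of presheaves; likewise for $q'_{0}\colon \mathrm{Comp}'(A) \to A^{+}$. Next, the presheaf map $\mathrm{Comp}'(F)\colon \mathrm{Comp}'(B) \to \mathrm{Comp}'(A)$, $(R_{0},x)\mapsto (R_{0}, F\circ x)$, is pointwise small: its fibre over $(R_{0},y)$ is the subobject of $\prod_{f\in R_{0}} F^{-1}(y_{f})$ cut out by the compatibility condition on $x$, which is bounded (using that $\mathrm{cod}$ is small and {\bf (A9)}), and $\prod_{f\in R_{0}}F^{-1}(y_{f})$ is small because $R_{0}$ is small, each $F^{-1}(y_{f})$ is small, and {\bf ($\Pi$S)} holds --- the latter being a consequence of the standing assumption {\bf (F)}.

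The heart of the argument is to show $F^{+}$ pointwise small by descent. Working in $\ct{E}/\ct{C}_{0}$, let $\tilde G$ be the object over $\pi^{*}\mathrm{Comp}'(A)$ whose fibre over $(a,(R,y))$ consists of the pairs $(R_{0},x)$ with $R_{0}\in \mathrm{BCov}(a)$, $R_{0}\subseteq R$, and $x$ a compatible family in $B$ over $R_{0}$ satisfying $F(x_{f}) = y_{f}$ for all $f\in R_{0}$. By the previous paragraph together with the smallness of $\mathrm{BCov}(a)$, the projection $p\colon \tilde G \to \pi^{*}\mathrm{Comp}'(A)$ is small. It fits into a commuting square
\diag{ \tilde G \ar[d]_{p} \ar[r]^{c} & \pi^{*}B^{+} \ar[d]^{\pi^{*}F^{+}} \\
\pi^{*}\mathrm{Comp}'(A) \ar[r]_{\pi^{*}q'_{0}} & \pi^{*}A^{+},}
where $c$ sends $(a,(R,y),(R_{0},x))$ to $(a,[R_{0},x])$; commutativity holds because $R_{0}\subseteq R$ and $F\circ x = y|_{R_{0}}$ force $[R_{0},F\circ x] = [R,y]$. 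I would then check this is a covering square: $\pi^{*}q'_{0}$ is a cover, and $c$ is surjective onto the pullback since, given $(a,(R,y))$ and $[T_{0},x_{0}]\in B^{+}(a)$ with $[T_{0},F\circ x_{0}] = [R,y]$, one chooses a basic covering sieve $W_{0}\subseteq R\cap T_{0}$ on which $F\circ x_{0}$ and $y$ agree and takes $(R_{0},x) := (W_{0}, x_{0}|_{W_{0}})$. Factoring $c$ through the genuine pullback $P$ of $\pi^{*}F^{+}$ along $\pi^{*}q'_{0}$, axiom {\bf (A6)} applied to $\tilde G \twoheadrightarrow P \to \pi^{*}\mathrm{Comp}'(A)$ shows the second leg is small, and then descent {\bf (A2)} along the cover $\pi^{*}q'_{0}$ gives that $\pi^{*}F^{+}$ is small, i.e.\ $F^{+}$ is pointwise small.

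The main obstacle --- and the reason this is not a one--liner --- is \emph{uniformity of smallness}. The naive candidates fail: the quotient map $\mathrm{Comp}(A)\to A^{+}$ is not small (an element of $A^{+}$ is refined by an unbounded collection of covering sieves), and neither is $\mathrm{Comp}(F)$ over the \emph{full} presheaves of compatible families, since a compatible family in $B$ over a cover may take values well outside the fibres of $F$. Presentability is precisely what allows cutting down to basic covers, and restricting to families valued \emph{pointwise} in the small fibres $F^{-1}(y_{f})$ is what makes $p$ small while keeping the square above covering; {\bf ($\Pi$S)} (hence {\bf (F)}) is then what upgrades ``a small product of small sets'' to ``small''. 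One also has to check that the compatibility predicate defining $\mathrm{Comp}'$ and $\tilde G$ is bounded, which relies on {\bf (A9)} and on the codomain map being small. (A tempting alternative --- apply $i^{*}$ to a covering square given by \refcoro{pointwisecoveredbylocallysmall} and use descent in sheaves --- is not available here, since the small maps of sheaves are only shown to satisfy {\bf (A1--9)} after this proposition.)
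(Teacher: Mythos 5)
Your proof is correct and takes essentially the same approach as the paper: both reduce to the one-step $(-)^+$ construction, cover the fibre of $F^+$ over $[R,x]$ by $\sum_{S\in\mathrm{BCov}(a),\,S\subseteq R}\prod_{f\in S}F^{-1}(x_f)$, observe that this is small by {\bf ($\Pi$S)} (a consequence of the standing assumption {\bf (F)}), and conclude via the quotient axiom {\bf (A6)}. The only difference is presentational --- the paper argues pointwise, fixing a single $q\in Q^+(a)$ and estimating its fibre directly, while you package the same covering object globally as a map $\tilde G \to \pi^*\mathrm{Comp}'(A)$ and add an explicit descent step via {\bf (A2)}, which the paper's fibrewise phrasing sidesteps.
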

\begin{proof}
To prove the proposition, it suffices to show that the $(-)^+$-construction preserves smallness. So let \func{F}{P}{Q} be a (pointwise) small morphism of presheaves and $q$ be an element of $Q^+(a)$, i.e. $q = [R, x]$ where $R$ is a sieve and $(x_f)_{f \in R}$ is a family of compatible elements. The fibre of $F^+$ over $q$ consists of equivalence classes of all those compatible families $(S, y)$ on $a$ such that $(S, F(y) )$ and $(R, q)$ are equivalent (by $F(y)$ we of course mean the family given by $F(y)_f = F(y_f)$). Because every such equivalence class is represented by a compatible family $(S, y)$ where $S$ is a \emph{basic} covering sieve contained in $R$ and $F(y_f) = x_f$ for all $f \in S$, the fibre of $F$ over $q$ is covered by the object:
\[ \sum_{S \in {\rm BCov}(a), S \subseteq R} \, \prod_{f \in S} F^{-1}(x_f). \]
It follows from the fullness axiom in \ct{E} that this object is small (actually, the exponentiation axiom {\bf ($\Pi$S)} would suffice for this purpose) and then it follows from the quotient axiom {\bf (A6)} that the fibre of $F$ over $q$ is small as well.
\end{proof}

\begin{prop}{pointwisesmclosedundercovering}
The pointwise small maps in sheaves are closed under covered maps: if
\diag{ X \ar[d]_{F} \ar[r] & A \ar[d]^{G} \\
Y \ar@{->>}[r]_P & B}
is a covering square of sheaves (i.e., $P$ and the induced map $X \to Y \times_B A$ are locally surjective) and $F$ is pointwise small, then also $G$ is pointwise small.
\end{prop}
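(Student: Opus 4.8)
The plan is to reduce the statement about sheaves to the corresponding fact about pointwise small maps of presheaves, which we already know is closed under covered maps (this was part of \reftheo{smallmapsinpresh}, since the pointwise small maps of presheaves are just $\smallmap{S}/\ct{C}_0$ pulled back along $\pi^*$, and $\smallmap{S}$ satisfies {\bf (A2)}). The subtlety is that a covering square of sheaves is \emph{not} in general a covering square of presheaves: the map $P$ and the gap map $X \to Y \times_B A$ are only \emph{locally} surjective, i.e. surjective after applying $i^*$, not pointwise surjective. So the first step is to massage the square into one to which the presheaf descent property applies.

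Concretely, I would proceed as follows. Apply $\rho^* = \pi^* i_*$ throughout; since finite limits of sheaves are computed as in presheaves, $\rho^*$ sends the square to a commuting square in $\ct{E}/\ct{C}_0$ whose gap map $\rho^* X \to \rho^* Y \times_{\rho^* B} \rho^* A$ is the image under $\rho^*$ of the sheaf gap map. The point is that a locally surjective map of sheaves becomes, under $\rho^*$, a map in $\ct{E}/\ct{C}_0$ that is a cover: indeed if $F \colon Y \to X$ is locally surjective then for each $x \in X(a)$ there is a covering sieve $S$ on $a$ and, for each $f \colon b \to a$ in $S$, an element $y \in Y(b)$ with $F(y) = x \cdot f$; in particular, taking $f = \id_a$ if $\id_a \in S$ — which need not hold — does not immediately work, so instead one observes that local surjectivity of $F$ means exactly that the presheaf image of $F$ is a dense subpresheaf of $X$, and densely-included subpresheaves of a sheaf, pulled back to $\ct{E}/\ct{C}_0$, are still covered by... — this is the crux, see below. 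Granting that $\rho^* P$ and the $\rho^*$-image of the gap map are covers in $\ct{E}/\ct{C}_0$, we get that $\rho^* X \to \rho^* Y \times_{\rho^* B} \rho^* A$ composed with the projection to $\rho^* Y$ exhibits $\rho^* G$ as covered (in the sense of \refdefi{coveringsquare}) by $\rho^* F$ in $\ct{E}/\ct{C}_0$; since $\rho^* F \in \smallmap{S}/\ct{C}_0$ by hypothesis, axioms {\bf (A2)} and {\bf (A6)} for $\smallmap{S}$ in $\ct{E}$ give $\rho^* G \in \smallmap{S}/\ct{C}_0$, i.e. $G$ is pointwise small.

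The main obstacle is exactly the claim highlighted above: that a \emph{locally} surjective map of sheaves restricts, under $\rho^*$, to a \emph{genuine} cover in $\ct{E}/\ct{C}_0$ — equivalently, that a dense monomorphism of sheaves is sent by $\rho^*$ to an isomorphism (it is not!) or at least that the covering data can be transported back from the covering sieves to the fibres. The resolution is to \emph{not} expect $\rho^* P$ to be a cover directly, but to use the presentability of the site: for $x \in X(a)$, choose a \emph{basic} covering sieve $R \in {\rm BCov}(a)$ witnessing local surjectivity; then $\coprod_{f \in R} Y(\dom f) \to X$ realised fibrewise is the relevant cover, and smallness of $R$ plus smallness of $\rho^* F$ (via {\bf (A3)}, {\bf (A5)}) shows the relevant preimage object over each $x$ is small, whence $\rho^* G$ itself is small by {\bf (A6)}. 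Thus the argument really is: cover the fibres of $G$ over each $x \in B(a)$ using a basic covering sieve together with the pointwise-small fibres of $F$, exactly as in the proof of \refprop{sheafpressmallness}, and conclude smallness of $G$ fibrewise via {\bf (A6)}. I expect the bookkeeping of which sieve is chosen (and checking it can be done uniformly/internally, so that the resulting preimage object is genuinely an object of $\ct{E}/\ct{C}_0$ rather than a fibrewise family) to be the only place requiring care; the collection axiom {\bf (A7)} in $\ct{E}$, together with presentability, handles this uniformity, just as collection was used in the proof of \reftheo{sheafiffunctor}.
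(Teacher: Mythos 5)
You are right that the square does \emph{not} become a covering square under $\rho^*$ (local surjectivity is strictly weaker than pointwise surjectivity), and you correctly flag this as the crux. But the proposed fix has a genuine gap: the object $\coprod_{f\in R} Y(\dom f)$ does not map to the fibre $G_c^{-1}(b)$ at all. For an element $b\in B(c)$ and an $a\in A(c)$ with $G(a)=b$, local surjectivity of the gap map only tells you that for some covering sieve $S$ of $c$ and each $f\colon d\to c$ in $S$ there \emph{exists} $y\in Y(d)$ with the right restriction; a \emph{single} $y$ over a \emph{single} $f$ determines nothing in $A(c)$, and a \emph{family} $(y_f)_{f\in S}$ would determine an element of $A(c)$ by gluing, but only if one can uniformly choose such a family, which is exactly what fails constructively. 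Your analogy with \refprop{sheafpressmallness} is misleading: there the fibre of $F^+$ over $[R,x]$ literally consists of (equivalence classes of) compatible \emph{sections} $(y_f)_{f\in S}$, so the sum-of-products object surjects onto it and $(\Pi\mathrm{S})$ suffices. Here we have only local \emph{existence} of witnesses, not sections, so sums and products under $(\mathbf{A3})$, $(\mathbf{A5})$, $(\mathbf{A6})$ cannot produce a small object surjecting onto the fibre.

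The missing ingredient is the fullness axiom $(\mathbf{F})$, which is the standing hypothesis on $\ct{E}$ in Section 4 precisely for this kind of step. The paper's proof does \emph{not} reduce to descent in $\ct{E}/\ct{C}_0$; it splits the proposition into a quotient triangle and a pullback square, and in each case argues directly at the level of fibres. Fullness (plus collection, to make the choice uniform in the basic covering sieve $S$) provides, for each $b\in B(c)$ and each $S\in{\rm BCov}(c)$, a \emph{small generic family} $P^S_b$ of multi-valued sections of the projection $Y^S_b\to S$. One then restricts to those $L\in P^S_b$ whose images under $G$ (resp.\ $Q$) are compatible —a bounded condition— and maps $\sum_{S\in{\rm BCov}(c)}\{L\in P^S_b:L\text{ compatible}\}$ onto the fibre of $G$ by gluing; surjectivity of this map follows from local surjectivity of $G$, since any assignment of witnesses along a sieve is an mvs and hence is refined by an element of the generic family. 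Smallness of the domain uses that $P^S_b$ and ${\rm BCov}(c)$ are small, and then $(\mathbf{A6})$ gives smallness of the fibre. So the role of fullness is irreplaceable here: it bounds the size of the space of all possible local witness-families, something $(\Pi\mathrm{S})$ alone cannot do because the candidate witness-families are not functions $S\to Y^S_b$ but subobjects of $Y^S_b$ covering $S$.
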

\begin{proof}
To make the proof more perspicuous, we will split the argument in two: first we show closure of pointwise small maps under quotients and then under descent. 

So suppose first that we have a commuting triangle of sheaves
\diag{Y \ar@{->>}[rr]^G  \ar[dr] _F& & X \ar[dl]^H \\
& B, & }
with $F$ pointwise small and $G$ locally surjective. Fix an element $b \in B(c)$. The fullness axiom in \ct{E} implies that for any basic covering sieve $S \in {\rm BCov}(c)$ there is a small generic family $P^S_b$ of \emph{mvs}s of the obvious (small) projection map
\diag{ p^S_b: Y^S_b = \{ (f: d \to c \in S, y \in Y(d)) \, : \, F_d(y) = b \cdot f \} \ar[r] & S,}
such that any \emph{mvs} of this map is refined by one in $P^S_b$ (recall that an \emph{mvs} of $p^S_b$ would be a subobject $L \subseteq Y^S_b$ such that the composite $L \subseteq Y^S_b \to S$ is a small cover).  Strictly speaking, the fullness axiom says that for every $S \in {\rm BCov}(c)$ such a generic \emph{mvs} exists,  not necessarily as a function of $S$. This does follow, however, using the collection axiom: for this axiom tells us that there is a small family $\{ P_i \, : \, i \in I^S_b \}$ of such \emph{mvs}s for every $S$. So we can set $P^S_b = \bigcup_{i \in I^S_b} P_i$ to get a generic \emph{mvs} of $p^S_b$ as a function of $S$.

Call an element $L \in P^S_b$ \emph{compatible after $G$}, if for any pair of elements $(f: d \to c, y)$ and $(f': d' \to c, y')$ in $L$ we have
\[ \forall g: e \to d, g': e \to d' \, ( \, fg = f' g' \Rightarrow G_d(y) \cdot g = G_{d'}(y') \cdot g' \, ). \]
Note that there is a map
\[ q: \sum_{S \in {\rm BCov}(c)} \{ L \in P^S_b \, : \, L \mbox{ compatible after } G  \, \}  \to H_c^{-1}(b),  \]
which one obtains by sending $(S, L)$ to the glueing of the elements $\{ G_d(y) \, : \, (f: d \to c, y) \in L \}$ in $X$. The domain of this map $q$ is small, so the desired result will follow, once we show that this map is a cover. For this we use the local surjectivity of $G$.

Local surjectivity of $G$ means that for every $x \in X(c)$ in the fibre over $b \in B(c)$, there is a basic covering sieve $S \in {\rm BCov}(c)$ such that
\[ \forall f: d \to c \in S \, \exists y \in Y(d) \, : \, G_d(y) = x \cdot f.  \]
But $G_d(y) = x \cdot f$ implies that $F_d(y) = b \cdot f$, so
\[ \{ (f: d \to c, y \in Y(d)) \, : \, G_d(y) = x \cdot f \} \]
is an \emph{mvs} of $p^S_b$ and therefore it is refined by an element of $P^S_b$. Since this element must be compatible after $G$, we have shown that $q$ is a cover. 

Second, suppose we have a pullback square of sheaves
\diag{ X \ar[d]_{F} \ar[r]^Q & A \ar[d]^{G} \\
Y \ar@{->>}[r]_P & B,}
where $F$ is pointwise small and $P$ and $Q$ are locally surjective. Again, for any $b \in B(c)$ and basic covering sieve $S$ of $c$, let $p^S_b$ be the map
\diag{ p^S_b: Y^S_b = \{ (f: d \to c \in S, y \in Y(d)) \, : \, P_d(y) = b \cdot f \} \ar[r] & S,}
as above. Furthermore, let ${\rm mvs}(p^S_b)$ be the object of \emph{mvs}s of $p^S_b$ and set
\begin{eqnarray*}
Y'(c) & = & \sum_{b \in B(c)} \sum_{S \in {\rm BCov}(c)} {\rm mvs}(p^S_b), \\
X'(c) & = & \sum_{(b, S, L) \in Y'(c)} \{ \, k \in \prod_{(f: d \to c, y) \in L} F_d^{-1}(y) \, : \, k \mbox{ compatible after } Q \, \},
\end{eqnarray*}
where we call $k \in \prod_{(f: d \to c, y) \in L} F_d^{-1}(y)$ \emph{compatible after $Q$}, if for any $(f: d \to c, y)$ and $ (f': d' \to c, y')$ in $L$ we have
\[ \forall g: e \to d, g': e \to d'  \in \ct{C} \, ( fg = f' g' \Rightarrow Q_d(k_{(f, y)}) \cdot g = Q_{d'}(k_{(f', y')}) \cdot g'  ). \]
This leads to a commuting square in $\ct{E}/\ct{C}_0$
\diag{ X'(c) \ar@{->>}[r]^{Q'_c} \ar[d]_{F'_c} & A(c) \ar[d]^{G_c} \\
Y'(c) \ar@{->>}[r]_{P'_c} & B(c),}
in which $P'$ and $F'$ are the obvious projections and $Q'$ sends $(b, S, L, k)$ to the glueing of $\{ Q_d(k_{(f, y)}) \, : \, (f, y) \in L \}$. The square is a pullback in which the map $P'$ is a cover (this uses the collection axiom) and $F'$ is small, so that $G_c$ is a small map by descent {\bf (A2)}  in $\ct{E}/\ct{C}_0$. This completes the proof. 
\end{proof}

\begin{theo}{threecoincidesheaves}
The pointwise small maps and locally small maps of sheaves coincide.
\end{theo}
\begin{proof}
That all locally small maps of sheaves are also pointwise small follows from the previous two propositions. To prove that all pointwise small maps are also locally small we use that the pointwise and locally small maps coincide in presheaves. 

So consider a pointwise small map \func{F}{B}{A} of sheaves. Since $i_* F$ is a pointwise small map of presheaves, there is a small map of presheaves $(k, l)_!$ with $k$ small in \ct{E} such that
\diag{\pi_! X \ar[d]_{(k, l)_!} \ar[r] & i_* B \ar[d]^{i_* F} \\
\pi_! Y \ar[r] & i_* A } is a covering square in presheaves. Applying sheafification $i^*$ and using that $i^* i_* \cong 1$, we obtain a diagram of the desired form.
\end{proof}

\begin{coro}{pointwisecoveredbylocallysmallsheaves}
Any pointwise small map is covered by one of the form $i^*(r, s)_!$ with $r$ small in \ct{E}. In fact, every composable pair $(G, F)$ of pointwise small maps of sheaves fits into a double covering square of the form
\diag{ \rho_! C \ar[d]_{i^*(k, l)_!} \ar@{->>}[r] & Z \ar[d]^G \\
\rho_! B \ar[d]_{i^*(r, s)_!}  \ar@{->>}[r] & Y \ar[d]^F \\
\rho_! A \ar@{->>}[r] & X, }
in which $k$ and $r$ are small in \ct{E}.
\end{coro}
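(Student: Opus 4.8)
The plan is to obtain both assertions from facts already in hand, transporting the corresponding presheaf statements through the sheafification functor rather than reproving everything directly in sheaves. The first assertion is immediate: by \reftheo{threecoincidesheaves} a pointwise small map of sheaves is the same thing as a locally small one, and by definition a locally small map is covered by a map of the form $i^*(r,s)_!$ with $r$ small in \ct{E}.

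For the second assertion I would first note that the forgetful functor $\rho^*$ factors as $\pi^* \circ i_*$, so that a map $F$ of sheaves is pointwise small exactly when the presheaf map $i_* F$ is pointwise small. Hence, starting from a composable pair $(G, F)$ of pointwise small maps of sheaves, the pair $(i_* G, i_* F)$ is a composable pair of pointwise small maps of presheaves, and \refcoro{pointwisecoveredbylocallysmall} supplies a double covering square of presheaves
\diag{ \pi_! C \ar[d]_{(k, l)_!} \ar@{->>}[r] & i_* Z \ar[d]^{i_* G} \\ \pi_! B \ar[d]_{(r, s)_!} \ar@{->>}[r] & i_* Y \ar[d]^{i_* F} \\ \pi_! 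A \ar@{->>}[r] & i_* X, }
with $k$ and $r$ small in \ct{E}.

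It then remains to apply the sheafification functor $i^*$ to this diagram. Since $i^*$ is Cartesian (\reftheo{sheafiffunctor}) it preserves the pullbacks involved in the quasi-pullback condition, and since it is a left adjoint it preserves coequalizers and hence regular epimorphisms, which in the regular categories \pshct{\ct{E}}{\ct{C}} and \shct{\ct{E}}{\ct{C}} (\reftheo{smallmapsinpresh}, \reftheo{sheavesHeytingcat}) are precisely the covers. Therefore $i^*$ carries covering squares to covering squares; using $i^* i_* \cong 1$, $i^* \pi_! = \rho_!$ and the definition of the operation $i^*(-,-)_!$, the image of the displayed diagram under $i^*$ is exactly the double covering square asserted in the statement, and the first assertion is recovered by discarding its top row.

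I do not anticipate a genuine obstacle here; the one point that deserves explicit justification is the claim that $i^*$ preserves covering squares, which combines the Cartesianness of $i^*$ with its preservation of covers. (Alternatively one could prove a sheaf version of \reflemm{thirdchar} — for a pointwise small $F\colon Z \to Y$ and an arbitrary $L\colon \rho_! B \to Y$ there is a covering square over $L$ whose left leg is of the form $i^*(k,l)_!$ with $k$ small, obtained by applying $i^*$ to the presheaf statement of \reflemm{thirdchar} for $i_* F$ and for $\pi_! B \to i_* i^* \pi_! B \to i_* Y$ — and then stack covering squares exactly as in the presheaf argument; but routing everything through $i_*$ and $i^*$ is shorter and avoids repeating the construction.)
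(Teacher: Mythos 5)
Your proposal is correct and follows essentially the route the paper intends: the paper simply says the corollary is ``immediate from the previous theorem and the corresponding fact for presheaves,'' and what that amounts to is exactly what you spell out --- obtain the presheaf double covering square from \refcoro{pointwisecoveredbylocallysmall} applied to $(i_*G, i_*F)$, then push it along $i^*$ using that $i^*$ is Cartesian, preserves covers, satisfies $i^*i_* \cong 1$, and satisfies $\rho_! = i^*\pi_!$. (Indeed, the proof of \reftheo{threecoincidesheaves} already rehearses this $i^*$-transport argument in the single-map case, so your expansion is faithful to the paper's strategy.)
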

\begin{proof}
Immediate from the previous theorem and the corresponding fact for presheaves (\refcoro{pointwisecoveredbylocallysmall}).
\end{proof}

Henceforth we can therefore use the term ``small map'' without danger of ambiguity. The first thing to do now is to show that the small maps in sheaves really satisfy the axioms for a class of small maps.

\begin{theo}{basicpropofsh}
The small maps in sheaves satisfy axioms {\bf (A1-9)}.
\end{theo}
\begin{proof}
Again, we postpone the proof of the collection axiom {\bf (A7)} (it will be \reftheo{collstablesh}). Because limits in sheaves are computed as in presheaves, {\bf (A1)} and {\bf (A9)} are inherited from presheaves. Colimits in sheaves are computed by sheafifying the result in presheaves, hence the axioms {\bf (A3)} and {\bf (A4)} follow from \refprop{sheafpressmallness}. That pointwise small maps are closed under covered maps was \refprop{pointwisesmclosedundercovering}: this disposes of {\bf (A2)} and {\bf (A6)}. Pointwise small maps are closed under composition, so {\bf (A5)} holds as well. Finally, since universal quantification in sheaves is computed as in presheaves, the axiom {\bf (A8)} holds in sheaves, because it holds in presheaves.
\end{proof}

\begin{theo}{manythingsstableinsh}
The following axioms are inherited by sheaf models: bounded exactness, representability, {\bf (NE), (NS), ($\Pi$E), ($\Pi$S)}, {\bf (M)} and {\bf (PS)}.
\end{theo}
\begin{proof}
Bounded exactness is inherited by sheaf models, since one can sheafify the quotient in presheaves. Representability is inherited for the same reason: one sheafifies the universal small maps in presheaves. Also the natural numbers object in sheaves is obtained by sheafifying the natural numbers object in presheaves, so {\bf (NE)} and {\bf (NS)} are inherited by sheaf models. Since $\Pi$-types in presheaves are computed as in sheaves and {\bf ($\Pi$E)} and {\bf ($\Pi$S)} are inherited by presheaf models, they will also be inherited by sheaf models. Finally, since monos in sheaves are pointwise, {\bf (M)} is inherited as well. 

The \spower-functor in sheaves is obtained by quotienting the \spower-functor in presheaves (see \refprop{someaxiomsstunderpresh}) by the following equivalence relation (basically, bisimulation understood as in sheaves): if $A, A' \subseteq {\bf y}c \times X$, then $A \sim A'$ if for all $ (f: b \to c, x) \in A(c)$, the sieve 
\[ \{ \, g: a \to b \, : \, (fg, x \cdot g) \in A' \, \} \]
covers $b$, and for all $(f': b' \to c, x') \in A'(c)$ the sieve 
\[ \{ \, g': a' \to b' \, : \, (f'g', x' \cdot g') \in A \, \} \]
covers $b'$.

One easily verifies that this defines an equivalence relation in presheaves; moreover, it is bounded, since the site is assumed to be presentable. Its quotient $P$ has the structure of a sheaf (as we have seen several times, to construct the glueing one uses the collection axiom to select small collections of representatives from each equivalence class). One defines the relation $\in_X \subseteq X \times P$ on an object $c \in \ct{C}$ by putting for any $x \in X(c)$ and $A \in \spower(X)(c)$,
\[ x \in [A] \Longleftrightarrow \mbox{the sieve } \{ f: d \to c \, : \,  (f, x \cdot f) \in A \} \mbox{ covers } c.\]
A straightforward verification establishes that this is indeed the power class object of $X$ in sheaves. Hence the axiom {\bf (PS)} is inherited by sheaf models.
\end{proof}

In the coming two subsections we will discuss the collection and fullness axioms and W-types in sheaf categories.

\subsection{Collection and fullness in sheaves}

\begin{theo}{collstablesh}
The collection axiom {\bf (A7)} is inherited by sheaf models.
\end{theo}
\begin{proof}
Let \func{F}{M}{N} be small map and $E: Y \rTo M$ be a cover in sheaves (i.e. $E$ is locally surjective). Without loss of generality we may assume that $F$ is of the form $i^* (k, l)_!: \rho_! B \to \rho_! A$. 

If the map $Q: X \to \pi_! B$ of presheaves is obtained by pulling back the map $i_*E$ along the component of the unit $1 \to i_*i^*$ at $\pi_!B$ as in
\diag{ X \ar[d]_Q \ar[r] & i_* Y \ar[d]^{i_* E} \\
\pi_! B \ar[r] & i_*\rho_! B = i_*i^* \pi_! B,}
then this map $Q$ also has to be locally surjective. This means that for the following object in \ct{E}
\begin{eqnarray*}
C & = & \{ \, (b \in B, S \in {\rm Cov}(c)) \, : \,  \sigma_B(b) = c \mbox{ and } \\
& & (\forall f: d \to c  \in S) \, (\exists x \in X(d)) \, (Q(x) = (b, f))  \},
\end{eqnarray*}
the obvious projection $s_0: C \to B$ is a cover. Therefore we can apply the collection axiom in \ct{E} to obtain a covering square of the form:
\diaglab{covsq1}{ V \ar[r]^{s_1} \ar[d]_l & C \ar@{->>}[r]^{s_0} & B \ar[d]^{k} \\
U \ar[rr]_{r_0} & & A,}
with $l$ small in \ct{E}. We wish to apply the collection axiom again. For this purpose, define the following two objects in \ct{E}:
\begin{eqnarray*}
V'&  = & \{ \, (v \in V, f \in \ct{C}) \, : \, \mbox{if } s_1(v) = (b, S),\mbox{then } f \in S \, \},  \\
W & = & \{ \, (v \in V, f: d \to c, x \in X(d)) \, : \, \mbox{if } s_1(d) = (b, S), \\
& & \mbox{ then } f \in S \mbox{ and } Q_d(x) = (b, f) \, \},
\end{eqnarray*}
and let $s_3: W \to V'$ and $s_2: V' \to V$ be the obvious projections. $s_3$ is a cover (essentially by definition of $C$), and the composite $l' = ls_2$ is small. So we can apply collection to obtain a covering square in \ct{E} 
\diaglab{covsq2}{ J \ar[r]^{s_4}  \ar[d]_{m} & W \ar@{->>}[r]^{s_3} & V' \ar[d]^{l'} \\
I \ar[rr]_{r_1} & & U, }
in which $m$ is small. Writing $r= r_0r_1$ and $s = s_0s_1s_2s_3s_4$, we obtain a commuting square
\diag{J \ar[r]^s \ar[d]_{m} & B \ar[d]^{k} \\
I \ar[r]_r & A,}
with every $j \in J$ determining an element $b \in B$, a sieve $S$ on $\sigma_B(b)$, an arrow $f \in S$ and an element $x \in X (\mbox{dom } f)$ such that $Q(x) = (b, f) \in \pi_! B$. If for such an element $j \in J$ we put $\rho_J(j) = {\rm dom}(f), t_j = f, n_j = l_b \circ f$ and for every $i \in I$ we define $\sigma_I(i) = \sigma_A(ri)$, then we obtain a square of presheaves:
\diag{ \pi_! J \ar[r]^{(s, t)_!} \ar[d]_{(m, n)_!} & \pi_! B \ar[d]^{(k, l)_!} \\
\pi_! I \ar[r]_{\pi_!r} & \pi_! A.}
To see that it commutes, we chase an element around the two sides of the diagram and it suffices to do that for an element of the form $(j, \id)$. So \[ \pi_! r (m, n)_! (j, \id) = \pi_! r (m j, l_b \circ f) = (rm j, l_b \circ f), \]
and $(k, l)_! (s, t)_! (j, \id) = (k, l)_! (sj, f) = (ksj, l_b \circ f)$.

We claim that sheafifying the square gives a covering square. Since $r$ is a cover and $\rho_!$ preserves these, this means that we have to show that the map from $\pi_! J$ to the pullback of the above square is locally surjective. \reflemm{propofshriek}.4 tells us that we may assume that the pullback is of the form  $\pi_!(I \times_A B)$ with $\sigma_{I \times_A B}(i, b) = \sigma_B(b)$. The induced map \func{K}{\pi_! J}{\pi_!(I \times_A B)} sends $(j, g)$ to $((mj, sj), f \circ g)$, where $f$ is the element in $\ct{C}_1$ determined by $j \in J$ as above. To show that this map is locally surjective, it suffices to prove that every element $((i, b), \id) \in \pi_!(I \times_A B)$ is locally hit by $K$. The element $i \in I$ determines an element $r_1i \in U$, and since \refdiag{covsq1} is a covering square, we find a $v \in V$ with $lv = r_1i$ and $s_0s_1v = b$, hence a covering sieve $S$ on $\rho_B(b)$. Moreover, since \refdiag{covsq2} is a covering square, we find for every $f \in S$ an element $j \in J$ such that $m(j) = i$ and $s(j) = b$. Then $K(j, \id) = ((i, b), f) = ((i, b), \id) \cdot f$, which proves that $K$ is locally surjective.

To complete the proof, we need to show that \func{(s, t)_!}{\pi_!J}{\pi_! B} factors through \func{Q}{X}{\pi_! B}. There is a map $(p, q): J \rTo \pi^* X$ which sends every $j \in J$ to the $x \in X(\mbox{dom } f)$ that it determines. Its transpose $(p, q)_!$ sends $(j, \id)$ to $x \in X$ which in turn is sent by $Q$ to $Q(x) = (sj, f) = (s, t)_! (j, \id)$. Therefore $(s, t)_! = Q(p, q)_!$.
\end{proof}

\begin{theo}{Fstableundersheaves}
(Assuming \ct{C} has chosen pullbacks.) The fullness axiom {\bf (F)} is inherited by sheaf models.
\end{theo}
\begin{proof}
In view of \reflemm{redlemmforfull} and \refcoro{pointwisecoveredbylocallysmallsheaves}, it will suffice to show that there exists a generic \emph{mvs} for any map of the form \func{i^*(k, \kappa)_!}{\rho_! B}{\rho_! A}, living over some object of the form $\rho_! X$ via some map \func{i^*(r, \rho)_!}{\rho_! A} {\rho_! X}, with $k$ and $r$ small.

We first construct the generic \emph{mvs} $P$. To this end, define:
\begin{eqnarray*}
S_0 & = & \{ (a \in A, \alpha :d \to c, S \in {\rm BCov}(\alpha^*\sigma_A(a)) )\, : \, \sigma_X(ra) = c \} \\
M_0 & = & \{ (a \in A, \alpha:d \to c, S \in {\rm BCov}(\alpha^*\sigma_A(a)), \beta \in S) \, : \, \sigma_X(ra)  = c \} \\
B_0 & = & \{ (b \in B, \alpha: d \to c, S \in {\rm BCov}(\alpha^*\sigma_A(kb)), \beta \in S, \gamma \in \ct{C}_1 \, :  \\
& & \sigma_X(rkb) = c, \alpha^*\kappa_b \circ \gamma = \beta \}
\end{eqnarray*}
(In the definition of $S_0$ and $M_0$ we have used that any pair consisting of a map $\alpha: d \to c \in \ct{C}$ and element $a \in A$ with $\sigma_X(ra) = c$ determines a pullback diagram
\diag{ \alpha^*\sigma_A(a) \ar[r] \ar[d]_{\alpha^*\rho_{a}} & \sigma_A(a) \ar[d]^{\rho_{a}} \\
d \ar[r]_\alpha & c }
in \ct{C}; in the definition of $B_0$ we have used that any pair consisting of a map $\alpha: d \to c \in \ct{C}$ and element $b \in B$ with $\sigma_X(rkb) = c$ determines a double pullback diagram
\diag{ \alpha^*\sigma_B(b) \ar[r] \ar[d]_{\alpha^*\kappa_b} & \sigma_B(b) \ar[d]^{\kappa_b} \\
\alpha^*\sigma_A(kb) \ar[r] \ar[d]_{\alpha^*\rho_{kb}} & \sigma_A(kb) \ar[d]^{\rho_{kb}} \\
d \ar[r]_\alpha & c }
in \ct{C}.) One easily checks that all the projections in the chain
\diag{B_0 \ar[r] & M_0 \ar[r] & S_0 \ar[r] & A \ar[r] & X }are small.

For the construction of $P$, we first build a generic \emph{mvs} for $S_0 \rTo A$ over $X$. This means we have a cover \func{n}{W}{X} and a small map \func{m_0}{Z_1}{W}, together with a generic \emph{mvs} $P_1$ for $S_0 \rTo A$ over $Z_1$, as in the diagram
\diag{ P_1 \ar@{>->}[r] \ar@/_/@{->>}[dr] & S_1 \ar[d] \ar[rr] & & S_0 \ar[d] \\
& A_1 \ar[d] \ar[rr] & & A \ar[d] \\
& Z_1 \ar[r]_{m_0} & W \ar@{->>}[r]_n  & X, }
where the rectangles are understood to be pullbacks. Next, we pull $B_0 \to M_0 \to S_0$ back along $P_1 \to S_0$ and obtain the diagram
\diag{ B_1 \ar[r] \ar[d] & B_0 \ar[d] \\
M_1 \ar[d] \ar[r] & M_0 \ar[d] \\
P_1 \ar[r] & S_0. }
Then we build a generic \emph{mvs} for $B_1 \to M_1$ over $Z_1$. This we obtain over an object $Z_2$ via a small map $Z_2 \rTo W'$ and a cover $W' \rTo Z_1$. Without loss of generality, we may assume that the latter map $W' \to Z_1$ is the identity. (Proof: apply the collection axiom to the small map $Z_1 \rTo W$ and the cover $W' \rTo Z_1$ to obtain a small map $S \rTo R$ covering the morphism $Z_1 \rTo W$. \reflemm{diffgenmvss} tells us that there lives a generic \emph{mvs} for $S_0 \rTo A$ over $S$ as well. By another application of \reflemm{diffgenmvss}, there lives a generic \emph{mvs} for $B_1 \to M_1$ over $T$, if $T \to S$ is the pullback of $Z  \rTo W'$ along the map $S \rTo W'$.) So we may assume there is a small map $m_1: Z_2 \rTo Z_1$, such that over $Z_2$ there is a generic \emph{mvs} $P_2$ for $B_1 \to M_1$, as in the following diagram
\diag{ P_2 \ar@{>->}[r] \ar@/_/@{->>}[dr] & B_2 \ar[d] \ar[r] & B_1 \ar[rr] \ar[d] & & B_0 \ar[d] \\
& M_2 \ar[d] \ar[r] & M_1 \ar[rr] \ar[d] & & M_0 \ar[d] \\
& Z_2 \ar[r]^{m_1}  \ar@/_1pc/[rrr]_t & Z_1 \ar[r]^{m_0}  & W \ar@{->>}[r]^n & X, }
where all the rectangles are supposed to be pullbacks. Fo convenience, write $t = nm_0m_1$.

We make some definitions. First of all, let
\begin{eqnarray*}
Z & = & \{ (z_2 \in Z_2, \delta: \, d \to c) \, : \, \sigma_X(t(z_2)) = c \mbox{ and } \\
& & (\forall a \in A_{t(z_2)}) \, (\exists S \in {\rm BCov}(\delta^* \sigma_A(a)) \, (m_1(z), a, \delta, S) \in P_1 \}.
\end{eqnarray*}
Furthermore, we write $m_2: Z \to Z_2$ for the obvious projection and put $m = m_0m_1m_2$. Finally, we let $P_3$ be the pullback of $P_2$ along $m_2$.

We wish to construct a diagram of presheaves of the form:
\diag{ P \ar@{>->}[r] \ar@/_/[dr]
 & \pi_!(Z \times_X B) \ar[d] \ar[rr] & & \pi_! B \ar[d]^{(k, \kappa)_!} \\
& \pi_! (Z \times_X A) \ar[d] \ar[rr] & & \pi_! A \ar[d]^{(r, \rho)_!}  \\
& \pi_! Z \ar[r]_{(m, \mu)_!}  & \pi_! W \ar[r]_{\pi_!n} & \pi_! X, & }
which we can do by putting $\sigma_Z(z_2, \delta) = {\rm cod}(\delta)$ and $\mu_{(z_2, \delta)} = \delta$. Note that $\pi_!n$ is a cover and $(m, \mu)_!$ is small. In addition, $P$ is defined by
saying that an element  $(z \in Z, b \in B, \eta: c \to d) \in \pi_!(Z \times_X B)(c)$ belongs to $P(c)$ if
\begin{quote}
there is a sieve $S \in {\rm BCov}(\mu_z^* \sigma_A(kb))$, a map $\beta \in S$ and a map $\gamma \in\ct{C}_1$ such that $(z, b, \mu_z, S, \beta, \gamma)$ belongs to $P_3$ and $\eta$ factors through $\gamma$.
\end{quote}
By construction, the map $P \rTo \pi_! (Z \times_X A)$ is locally surjective. By sheafifying the whole diagram, we therefore obtain an \emph{mvs} $i^* P$ for $i^*(k, \kappa)_!$ over $\rho_! Z$ in the category of sheaves. The remainder of the proof will show it is generic.

To that purpose, let $V \rTo \rho_! W$ be a map of sheaves and $Q$ be an \emph{mvs} for $i^* (k, \kappa)_!$ over $V$. Let $\overline{Y}$ be the pullback in presheaves of $V$ along the map $\pi_! W \to \rho_! W$ and cover $\overline{Y}$ using the counit $\pi_! \pi^* \overline{Y} \to \overline{Y}$. Writing $Y = \pi^* \overline{Y}$, this means we have a commuting square of presheaves
\diag{ \pi_!Y \ar[r]^{(l, \lambda)_!} \ar[d] & \pi_! W \ar[d] \\
V \ar[r]  & \rho_! W, }
in which the vertical arrows are locally surjective and the top arrow is of the form $(l, \lambda)_!$. Finally, let $\overline{Q}$ be the pullback of $Q$ along $\pi_! Y \to V$. This means we have the following diagram of presheaves:
\diag{ \overline{Q} \ar@{>->}[r] \ar@/_/[dr]
 & \pi_!(Y \times_X B) \ar[d] \ar[rr] & & \pi_! B \ar[d]^{(k, \kappa)_!} \\
& \pi_!(Y \times_X A) \ar[d] \ar[rr] & & \pi_! A \ar[d]^{(r, \rho)_!}  \\
& \pi_! Y \ar[r] _{(l, \lambda)_!} & \pi_! W \ar[r]_{\pi_!n} & \pi_! X, &, }
where the rectangles are pullbacks, computed, as usual, using \reflemm{propofshriek}.5 (so $\sigma_{Y \times_X A}(y, a) = \lambda_y^* \sigma_A(a)$ and $\sigma_{Y \times_X B}(y, b) = \lambda_y^* \sigma_B(b)$). The map $\overline{Q} \to \pi_! (Y \times_X A)$ is locally surjective, and therefore 
\begin{eqnarray*}
Q_1 & = & \{ \, (y, a, \lambda_y, S \in {\rm BCov}(\lambda_y^* \sigma_A(a))) \, : \\
& & \, (\forall \beta \in S) \, (\exists b \in B_a) \, (\exists \gamma \in \ct{C}_1) \, (y, b, \gamma) \in \overline{Q} \mbox{ and } \lambda_y^* \kappa_b \circ \gamma = \beta  \, \} \\
& = & \{ \, (y, a, \lambda_y, S \in {\rm BCov}(\lambda_y^* \sigma_A(a))) \, : \\
& & \, (\forall \beta \in S) \, (\exists b \in B_a) \, (\exists \gamma \in \ct{C}_1) \, (y, b, \gamma) \in \overline{Q} \mbox{ and } (b, \lambda_y, S, \beta, \gamma) \in B_0\, \}
\end{eqnarray*}
is an \emph{mvs} of $S_0 \to A$ over $Y$. By the genericity of $P_1$ this implies the existence of a map $v_1: U_1 \to Z_1$ and a cover $w_1: U_1 \to Y$ such that $m_0v_1 = l w_1$ and $v_1^* P_1 \leq w_1^* Q_1$ as \emph{mvs}s of $S_0 \to A_0$ over $U_1$. Note that this means that 
\begin{equation} \label{lifesaver}
 (v_1(u_1), a, \alpha, S) \in P_1 \Longrightarrow \alpha = \lambda_{w_1(u_1)}.
\end{equation}
Next, define the subobject $Q_2 \subseteq v_1^* B_1$ by saying for any element $(u_1 \in U_1, b \in B, S \in {\rm BCov}(\lambda_{w_1(u_1)}^*\sigma_A(kb)), \beta \in S, \gamma \in \ct{C}_1) \in v_1^* B_1$:
\[  (u_1, b, S, \beta, \gamma) \in Q_2 \Longleftrightarrow (w_1 u_1, b, \gamma) \in \overline{Q}({\rm dom}(\gamma)). \]
It follows from (\ref{lifesaver}) and the definition of $Q_1$ that $Q_2$ is a small \emph{mvs} of $B_1 \to M_1$ over $U_1$. Therefore there is a map \func{v_2}{U}{Z_2} and a cover \func{w_2}{U}{U_1} such that $v_1 w_2 = m_1 v_2$ and $v_2^* P_2 \leq w_2^* Q_2$. Note that (\ref{lifesaver}) implies that $v_2$ factors through $m_2: Z \to Z_2$ via a map $v: U \to Z$ given by $v(u) = (v_2(u), \lambda_{w_1w_2(u)})$.

If we put $w = w_1 w_2$, then $lw = lw_1w_2 = m_0v_1w_2= m_0m_1v_2 = m_0m_1m_2v = mv$. Since for each $u \in U$, $\sigma_Z(vu) = {\rm dom}(\lambda_{wu}) = \sigma_Y(wu)$, we may put $\sigma_U(u) = \sigma_Z(vu) = \sigma_Y(wu)$ and then $\pi_!w$ and $\pi_!v$ define maps $\pi_! U \to \pi_! Y$ and $\pi_! U \to \pi_! Z$, respectively, such that $(l, \lambda)_! \pi_! w = (m, \mu)_! \pi_!v$. Because $\pi_!w$ is a cover, the proof will be finished, once we show that $(\pi_!v)^* P \leq (\pi_! w)^* \overline{Q}$. 

To show this, consider an element $(u \in U, b \in B, \eta: c \to d \in \ct{C}) \in \pi_!(U \times_X B)(c)$ for which we have $(u, b, \eta) \in (\pi_! v)^* P(c)$. This means that $(vu, b, \eta) \in P(c)$ and hence that there is a sieve $S \in {\rm BCov}(\mu_{vu}^* \sigma_A(kb))$, a map $\beta \in S$ and a map $\gamma: e \to d \in \ct{C}_1$ such that $(vu, b, \mu_{vu}, S, \beta, \gamma) \in P_3$ and $\eta$ factors through $\gamma$. The former means that $(v_2u, b, \mu_{vu}, S, \beta, \gamma) \in P_2$ and since $v_2^* P_2 \leq w_2^* Q_2$, it follows that $(w_2 u, b, S, \beta, \gamma) \in Q_2$. By definition this means that $(wu, b, \gamma) \in \overline{Q}(e)$. Since $\overline{Q}$ is a presheaf, also $(w u, b, \eta) \in \overline{Q}(c)$ and hence $(u, b, \eta) \in (\pi_!w)^* \overline{Q}(c)$. This completes the proof.
\end{proof}

\begin{rema}{fullnesshinternallogic}
Again, one can also prove this result using the internal logic of categories of sheaves. Also to illustrate its power, we give one such proof here.

\begin{theo}{stabFforshinternallogic}
(Assuming \ct{C} has chosen finite products.) The fullness axiom {\bf (F)} is inherited by sheaf models.
\end{theo}
\begin{proof}
In view of \reflemm{redlemmforfull} and \refcoro{pointwisecoveredbylocallysmall}, we only need to build generic \emph{mvs}s for maps of the form $i^*(k, l)_!: \rho_! B \to \rho_! A$ in which $k$ is small, where $\rho_! A$ lies over some object of the form $\rho_! X$ via a map of the form $i^*(r, s)_!$ in which $r$ is small. Again, by replacing \ct{C} by $\ct{C}/\rho_!(X)$, we may assume that $X = 1 = \{ * \}$ and $\sigma_X(*) = 1$.

Note that for a fixed $c \in \ct{C}_0$ an \emph{mvs} of $i^*(k, l)_!$ over $i^*\ct{C}(-, c)$ as in
\diag{ P \ar@{ >->}[r] \ar@{->>}[dr] & \rho_!(B) \times i^*\ct{C}(-, c) \ar[d]^{i^*((k, l)_! \times \id)} \\
& \rho_!(A) \times i^*\ct{C}(-, c) }
satisfies
\begin{displaymath}
\begin{array}{c}
(\forall a \in A) \, (\exists S \in {\rm BCov}(\sigma_A(a) \times c)) \\ (\forall g: d \to \sigma_A(a) \times c \in S) \, (\exists b \in B) \, (\exists f: d \to \sigma_B(b)) \, (\exists h: d \to c)  \\ (k(b), l_b \circ f) = (a, \pi_1 \circ g), \pi_2 \circ g = h \mbox{ and } (b, f, h) \in P(d),
\end{array}
\end{displaymath}
or
\begin{equation} \label{mvsschoven}
\begin{array}{c}
(\forall a \in A) \, (\exists S \in {\rm BCov}(\sigma_A(a)) \times c) \\ (\forall g: d \to \sigma_A(a) \times c \in S) \, (\exists b \in B) \, (\exists f: d \to \sigma_B(b)) \\ k(b) =a, l_b \circ f = \pi_1 \circ g \mbox{ and } (b, f, \pi_2 \circ g) \in P(d).
\end{array}
\end{equation}
We first apply fullness in \ct{E} to the map
\[ \{ (a \in A, c \in \ct{C}_0, S \in {\rm BCov}(\sigma_A(a) \times c)) \} \to A: (a, c, S) \to a \]
to obtain a generic small family of \emph{mvs}s $(Q_j)_{j \in J}$.

Writing for every $j \in J$
\begin{eqnarray*}
\bar{Q}_j & := & \{ (a, c, S, g) \, : \, (a, c, S) \in Q_j, g \in S \}, \\
\tilde{Q}_j & := & \{ (a, c, S, g, b, f) \, : \, (a, c, S) \in Q_j, g \in S, b \in B, k(b) = a, l_bf = \pi_1g \},
\end{eqnarray*}
we have an obvious projection
\[ \tilde{Q}_j \to \bar{Q}_j. \]
Applying fullness and using the collection axiom we obtain generic families $\{ P_{ij} \}_{i \in I_j}$ for these maps as well. (The collection axiom is employed here to obtain these generic families as a function of $j$.)

For fixed $c \in \ct{C}, j \in J$ and $i \in I_j$ the object $P_{ij}$ determines a subsheaf
\[ \hat{P}_{ij}^{(c)} \subseteq \rho_!(B) \times i^*\ct{C}(-, c) \]
generated by those elements $(b, f, \pi_2 \circ g)$ for which there is a basic covering sieve $S$ such that $(k(b), c, S, g, b, f) \in P_{ij}$. Again, we only take those which are \emph{mvs}s, i.e., map in a locally surjective manner to $\rho_!(A) \times i^*\ct{C}(-, c)$.

Now suppose $c_0$ is arbitrary and $R \subseteq \rho_!(B) \times \ct{C}(-, c_0)$ is an \emph{mvs}. This means that (\ref{mvsschoven}) holds with $c = c_0$. Hence there is a $Q_j$ with
\begin{equation} \label{niceinclusion}
\begin{array}{rcl}
Q_j & \subseteq & \{ \, (a, c_0, S) \, : \, (\forall g: d \to \sigma_A(a) \times c \in S) \, (\exists b \in B) \, (\forall f: d \to \sigma_B(b)) \\
& & k(b) = a, l_b \circ f = \pi_1 \circ g \mbox{ and } (b, f, \pi_2 \circ g) \in R(d) \, \}
\end{array}
\end{equation}
and an $i \in I_j$ with
\begin{eqnarray*}
P_{ij} & \subseteq & \{ \, (a, c_0, S, g, b, f) \, : \, (b, f, \pi_2 \circ g) \in R(d) \, \}. 
\end{eqnarray*}
It is clear that $\hat{P}_{ij}^{(c_0)} \subseteq R$, so it remains to verify that $\hat{P}_{ij}^{(c_0)}$ is an \emph{mvs}.

We check (\ref{mvsschoven}) for $c = c_0$. So take $a \in A$. We want to show that the generator $(a, \pi_1, \pi_2) \in \rho_!(A) \times i^*\ct{C}(-, c_0)$ is locally hit by $i^*((k, l)_! \times \id)$. Because $Q_j$ is an \emph{mvs}, there are $e \in \ct{C}_0$ and $S \in {\rm BCov}(\sigma_A(a) \times e)$ such that $(a, e, S) \in Q_j$; morever, we must have $e = c_0$, because (\ref{niceinclusion}) holds. Since $P_{ij}$ is an \emph{mvs} we know that for every $g \in S$ there are $b \in B$ and $f \in \ct{C}_1$ with $(a, c_0, S, g, b, f) \in P_{ij}$. In particular $(a, c_0, S, g, b, f) \in \tilde{Q}_{j}$, so $k(b) = a$ and $l_b \circ f = \pi_1 \circ g$. By construction $(b, f, \pi_2 \circ g) \in \hat{P}_{ij}^{(c_0)}$ and for this element the equation 
\[ i^*((k, l)_! \times \id)(b, f, \pi_2 \circ g) = (k(b), l_b \circ f, \pi_2 \circ g) = (a, \pi_1, \pi_2) \cdot g \]
holds. This concludes the proof.
\end{proof}

\end{rema}

\subsection{W-types in sheaves}

In this final subsection, we show that the axiom {\bf (WE)} is inherited by sheaf models. It turns out that the construction of W-types in categories of sheaves is considerably more involved than in the presheaf case (in \cite{bergmoerdijk08b} we showed that some of the complications can be avoided if the metatheory includes the axiom of choice). We then go on to show that the axiom {\bf (WS)} is inherited as well, if we assume the axiom of multiple choice. 

\begin{rema}{errorinMP}
In \cite{moerdijkpalmgren00} the authors claimed that W-types in categories of sheaves are computed as in presheaves (Proposition 5.7 in \emph{loc.cit.}) and can therefore be described in the same (relatively easy) way. But, unfortunately, this claim is incorrect, as the following  counterexample shows. Let $F: 1 \to 1$ be the identity map on the terminal object. The W-type associated to $F$ is the initial object, which, in general, is different in categories of presheaves and sheaves.  (This was noticed by Peter Lumsdaine together with the first author.)
\end{rema}

We fix a small map $F: Y \to X$ of sheaves. If $x \in X(a)$ and $S$ is a covering sieve on $a$, then we put
\[ Y_x^S := \{ (f: b \to a \in S, y \in Y(b)) \, : \, F(y) = x \cdot f \}. \]
Observe that $Y_x^S$ is small and write $\psi$ for the obvious projection
\[  \psi: \sum_{(S, x)} Y^S_x \to X \times_{\ct{C}_0} {\rm Cov}. \]
Let $\Psi = P_{\psi} \circ \inhspower$ and let $\cal{V}$ be its initial algebra (see \reftheo{initialalginpredcatofsmallmaps}). Elements $v$ of ${\cal V}$ are therefore of the form ${\rm sup}_{(a, x, S)}t$ with $(a, x, S) \in X \times_{\ct{C}_0} {\rm Cov}$ and $t: Y^S_x \to \inhspower {\cal V}$. We will think of such an element $v$ as a labelled well-founded tree, with a root labelled with $(a, x, S)$. To this root is attached, for every $(f, y) \in Y^S_x$ and $w \in t(f, y)$, the tree $w$ with an edge labelled with $(f, y)$. To simplify the notation, we will denote by $v(f, y)$ the \emph{small} collection of all trees that are attached to the root of $v$ with an edge that has the label $(f, y)$. 

We now wish to define a presheaf structure on ${\cal V}$. We say that a tree $v \in {\cal V}$ is \emph{rooted} at an object $a$ in \ct{C}, if its root has a label whose first component is $a$. If $v = {\rm sup}_{(a, x, S)}t$ is rooted at $a$ and $f: b \to a$ is a map in \ct{C}, then we can define a tree $v \cdot f$ rooted at $b$, as follows:
\[ v \cdot f = {\rm sup}_{(b, x \cdot f, f^* S)} f^* t, \]
with
\[ (f^*t)(g, y) = t(fg, y). \]
This clearly gives ${\cal V}$ the structure of a presheaf. Note that 
\[ (v \cdot f)(g, y) = v(fg, y). \]

Next, we define by transfinite recursion a relation on ${\cal V}$:
\begin{center}
\begin{tabular}{lcp{8 cm}}
$v \sim v'$ & $\Leftrightarrow$ & if the root of $v$ is labelled with $(a, x, S)$ and the root of $v'$ with $(a', x', S')$, then $a = a'$, $x = x'$ and there is a covering sieve $R \subseteq S \cap S'$ such that for every $(f, y) \in Y_x^R$ we have $v(f, y) \sim v'(f, y)$. 
\end{tabular}
\end{center}
Here, the formula $v(f, y) \sim v'(f, y)$ is supposed to mean
\[ \forall m \in v(f, y), n \in v'(f, y)\, : \, m \sim n. \]
In general, we will write $M \sim N$ for small subobjects $M$ and $N$ of ${\cal V}$ to mean 
\[ \forall m \in M, n \in N \, : \, m \sim n. \]
In a similar vein, we will write for such a subobject $M$,
\[ M \cdot f = \{ m \cdot f \, : \, m \in M \}. \]
That the relation $\sim$ is indeed definable can be shown by the methods of \cite{berg05} or \cite{bergmoerdijk08}. By transfinite induction one can show that $\sim$ is symmetric and transitive, and compatible with the presheaf structure ($v \sim w \Rightarrow v \cdot f \sim w \cdot f$). 

Next, we define \emph{composability} and \emph{naturality} of trees (as we did in the presheaf case, see \reftheo{Winpresh}). 
\begin{itemize}
\item A tree $v \in {\cal V}$ whose root is labelled with $(a, x, S)$ is \emph{composable}, if for any $(f: b \to a, y) \in Y^S_x$ and $w \in  v(f, y)$,  the tree $w$ is rooted at $b$.
\item A tree $v \in {\cal V}$ whose root is labelled with $(a, x, S)$ is \emph{natural}, if it is composable and for any $(f: b \to a, y) \in Y_x^S$ and $g: c \to b$,
\[ v(f, y) \cdot g \sim v(fg, y \cdot g). \]
\end{itemize}
One can show that if $v$ is natural, and $v \sim w$, then also $w$ is natural; moreover, natural trees are stable under restriction. The same applies to the trees that are \emph{hereditarily} natural (i.e. not only are they themselves  natural, but the same is true for all their subtrees). 

We shall write ${\cal W}$ for the object consisting of those trees that are hereditarily natural. The relation $\sim$ defines an equivalence on ${\cal W}$, for if a tree $v = {\rm sup}_{(a, x, S)} t$ is natural, then for all $(f, y) \in Y^S_x$ one has $v(f, y) \cdot \id \sim v(f \cdot \id, y \cdot \id)$, that is, $v(f, y) \sim v(f, y)$, and therefore $v \sim v$. By induction one proves that the equivalence relation $\sim$ on ${\cal W}$ is bounded and hence a quotient exists. We denote it by $\overline{\cal W}$. It follows from what we have said that the quotient $\overline{\cal W}$ is a presheaf, but more is true: one can actually show that $\overline{\cal W}$ is a sheaf and, indeed, the W-type associated to $F$ in sheaves.

\begin{lemm}{Wbarseparated}
Let $w, w' \in {\cal W}$ be rooted at $a \in \ct{C}$. If $T$ is a sieve covering $a$ and $w \cdot f \sim w' \cdot f$ for all $f \in T$, then $w \sim w'$. In other words, $\overline{\cal W}$ is separated.
\end{lemm}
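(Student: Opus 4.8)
The plan is to prove the statement by transfinite induction on the (well-founded) tree structure of $w$, mimicking the ``first claim'' in the proof of \reftheo{sheafiffunctor}, but now for the relation $\sim$ on $\cal W$ rather than for equality on a separated presheaf. So suppose $w = {\rm sup}_{(a, x, S)}t$ and $w' = {\rm sup}_{(a, x', S')}t'$ are hereditarily natural trees rooted at $a$, that $T$ is a sieve covering $a$, and that $w \cdot f \sim w' \cdot f$ for every $f \in T$. First I would extract, from the hypothesis $w \cdot f \sim w' \cdot f$ together with the definition of $\sim$ applied at the root, that $x \cdot f = x' \cdot f$ for all $f \in T$; since $X$ is a sheaf and $T$ covers $a$, this forces $x = x'$.

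Next I would produce the required covering sieve $R \subseteq S \cap S'$ with $w(g, y) \sim w'(g, y)$ for all $(g, y) \in Y^R_x$. The natural candidate is
\[ R = \{ \, g \colon c \to a \, : \, g \in S \cap S' \text{ and } \forall y \in Y(c)\, ( F(y) = x \cdot g \Rightarrow w(g, y) \sim w'(g, y) ) \, \}. \]
One checks $R$ is a sieve (using compatibility of $\sim$ with restriction and the presheaf identities $(v \cdot f)(g, y) = v(fg, y)$), and that it is small by bounded separation, since $\sim$ is a bounded relation on the small object $\cal V$ and $Y^S_x$ is small. The crux is showing $R$ covers $a$. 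For this I would invoke the local character axiom for the topology: it suffices to show that for every $f \in T$ the sieve $f^*R$ covers ${\rm dom}(f)$. Fix $f \colon b \to a$ in $T$. By hypothesis $w \cdot f \sim w' \cdot f$, and both are hereditarily natural (stability of hereditary naturality under restriction), so by definition of $\sim$ there is a covering sieve $R_f \subseteq f^*S \cap f^*S'$ on $b$ with $(w \cdot f)(h, y) \sim (w' \cdot f)(h, y)$, i.e.\ $w(fh, y) \sim w'(fh, y)$, for all $(h, y) \in Y^{R_f}_{x \cdot f}$. I claim $R_f \subseteq f^*R$. Given $h \in R_f$ and $y \in Y$ with $F(y) = (x \cdot f) \cdot h = x \cdot (fh)$, the subtrees $w(fh, y)$ and $w'(fh, y)$ are themselves small collections of hereditarily natural trees, and we have just seen $w(fh, y) \sim w'(fh, y)$ as subobjects. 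Strictly this already gives $fh \in R$, hence $h \in f^*R$; but if one wants to descend through the subtrees (e.g.\ to conclude something genuinely pointwise about elements $m \in w(fh, y)$) one would apply the induction hypothesis to pairs $m, m'$ of subtrees --- however, note that here no descent through the inductive hypothesis is actually needed, since $\sim$ on subobjects is exactly what $R$ asks for. Thus $R_f \subseteq f^*R$, so $f^*R$ covers $b$, and by local character $R$ covers $a$. Combined with $x = x'$ and $R \subseteq S \cap S'$, this is precisely $w \sim w'$.

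I expect the main obstacle to be bookkeeping: keeping straight the three-layered data (the sieves $S, S', T, R, R_f$; the labels; the small collections $w(f,y)$ of subtrees) and verifying that $R$ is genuinely a sieve and genuinely small, i.e.\ that every quantifier in its defining formula either ranges over a small object or is bounded. The boundedness of $\sim$ --- which is asserted earlier via the methods of \cite{berg05} and \cite{bergmoerdijk08} --- together with bounded separation (\refrema{bounded separation}) and the smallness of each $Y^S_x$ is what makes this go through, and the only genuinely topological input is the local character axiom, exactly as in the separatedness argument of \reftheo{sheafiffunctor}. A minor point to be careful about: one must use the assumption that $w, w'$ are \emph{hereditarily} natural (not merely natural) so that all restrictions $w \cdot f$, $w' \cdot f$ remain in $\cal W$ and the definition of $\sim$ can be unwound at every stage.
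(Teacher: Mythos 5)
Your overall strategy coincides with the paper's: extract $x = x'$ from separatedness of $X$, build a candidate sieve $R \subseteq S\cap S'$ on which $w$ and $w'$ agree fibrewise, and then show $R$ covers by local character using the sieves $R_f$ witnessing $w\cdot f \sim w'\cdot f$. The induction you contemplate at the start is not needed, as you yourself observe in passing; the paper uses none either.

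The one genuine problem is in your definition of $R$. You set
\[ R = \{\, g\colon c\to a \,:\, g\in S\cap S' \text{ and } \forall y\in Y(c)\,(F(y) = x\cdot g \Rightarrow w(g,y)\sim w'(g,y))\,\}, \]
quantifying only over $y$ in the single fibre of $Y$ above $x\cdot g$, and assert that $R$ is a sieve ``using compatibility of $\sim$ with restriction and the presheaf identities.'' That argument does not go through. If $g\in R$ and $k\colon d\to c$, naturality of $w$ and compatibility of $\sim$ under restriction tell you that $w(gk, y\cdot k)\sim w'(gk, y\cdot k)$ for those $z\in Y(d)$ of the form $y\cdot k$ with $y\in Y(c)$ lying over $x\cdot g$; but $Y(d)$ may well contain elements $z$ over $x\cdot gk$ that are not restrictions of anything in $Y(c)$, and for those the hypothesis $g\in R$ gives you nothing. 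Since $R$ need not be a sieve, the local character axiom (which is stated for sieves) cannot be invoked as you do.

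The fix is precisely what the paper does: quantify over \emph{all} composable pairs $(h,y)\in Y^{M_b}_{x\cdot g}$ rather than just over $y$ in the top fibre, i.e.\ take
\[ R = \{\, g\colon b\to a \,\in\, S\cap S' \,:\, \forall (h,y)\in Y^{M_b}_{x\cdot g}\ [\, w(gh,y)\sim w'(gh,y)\,]\,\}. \]
This $R$ is the largest sieve contained in your candidate --- closure under precomposition is now built in --- and the rest of your argument goes through unchanged: $R_f\subseteq f^*R$ follows exactly as you wrote (because $R_f$ is itself a sieve, membership of $h\in R_f$ already gives the condition for all $hk$), and local character then yields that $R$ covers $a$. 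Boundedness and smallness are handled just as you indicate, by boundedness of $\sim$ and bounded separation.
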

\begin{proof} If the label of the root of $w$ is of the form $(a, x, S)$ and that of $w'$ is of the form $(a, x', S')$, then $w \cdot f \sim w' \cdot f$ implies that $x \cdot f = x' \cdot f$ for all $f \in T$. As $X$ is separated, it follows that $x = x'$.

Consider
\begin{eqnarray*}
R & = & \{ \, g: b \to a  \in (S \cap S') \, : \, \forall (h, y) \in Y_{x \cdot g}^{M_b} \, [ \,  w(gh, y) \sim w'(gh, y)\, ] \, \}.
\end{eqnarray*}
$R$ is a sieve, and the statement of the lemma will follow once we have shown that it is covering.

Fix an element $f \in T$. That $w \cdot f \sim w' \cdot f$ holds means that there is a covering sieve $R_f \subseteq f^*S \cap f^*S'$ such that for every $(k, y) \in Y^{R_f}_{x \cdot f}$ we have $w(fk, y) = (w \cdot f)(k, y) \sim (w' \cdot f)(k, y) = w'(fk, y)$. In other words, $R_f \subseteq f^*R$. So $R$ is a covering sieve by local character.
\end{proof}

\begin{lemm}{Wbarsheaf}
$\overline{\cal W}$ is a sheaf.
\end{lemm}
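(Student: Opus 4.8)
The plan is to exploit \reflemm{Wbarseparated}: since $\overline{\cal W}$ is already separated, it suffices to show that every compatible family over a covering sieve admits at least one amalgamation. So I fix an object $a \in \ct{C}$, a covering sieve $T$ on $a$, and a compatible family $[w_f] \in \overline{\cal W}({\rm dom}\, f)$ for $f \in T$ — compatible meaning $[w_f] \cdot g = [w_{fg}]$ for all composable $g$ — and I must produce $[w] \in \overline{\cal W}(a)$ with $[w] \cdot f = [w_f]$ for every $f \in T$.

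The first step is to replace the equivalence classes by honest trees. Applying the collection axiom (exactly as in the proof of \reftheo{sheafiffunctor}) to the cover ${\cal W} \to \overline{\cal W}$ — it is a cover because $\overline{\cal W}$ is the quotient of ${\cal W}$ by a bounded equivalence relation — I obtain, for every $f \in T$, a small inhabited index set $I_f$ and hereditarily natural trees $w_f^i \in {\cal W}$ ($i \in I_f$), each rooted at $b := {\rm dom}\, f$ and with $[w_f^i] = [w_f]$. Write $(b, x_f, S_f^i)$ for the label of the root of $w_f^i$; since all the $w_f^i$ are $\sim$-equivalent, the first two components are independent of $i$, and from the definition of $\sim$ together with $[w_f] \cdot g = [w_{fg}]$ one reads off that $x_f \cdot g = x_{fg}$. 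Thus $(x_f)_{f \in T}$ is a compatible family in the sheaf $X$ and glues to a unique $x \in X(a)$ with $x \cdot f = x_f$.

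Next I assemble the amalgamating tree $w = {\rm sup}_{(a, x, S)} t$. For the covering sieve I take
\[ S = \{\, fk \;:\; f \in T,\ i \in I_f,\ k \in S_f^i \,\}, \]
which is small (being built from the small data $T$, the $I_f$, and the small sieves $S_f^i$) and covering by the local character axiom, since $T$ is covering and, for each $f$, $S_f^i$ is covering for the inhabiting $i \in I_f$. For the branching map, given $(h : c \to a, y) \in Y^S_x$ I let $t(h, y)$ be the union of the collections $w_f^i(k, y)$ taken over all factorisations $h = fk$ with $f \in T$, $i \in I_f$ and $k \in S_f^i$; here $F(y) = x \cdot h = x_f \cdot k$, so $(k, y)$ genuinely lies in $Y^{S_f^i}_{x_f}$ and $w_f^i(k, y)$ is defined. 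This $t(h,y)$ is a small inhabited subobject of ${\cal V}$. One then verifies, by transfinite induction using the hereditary naturality of each $w_f^i$ and the compatibility of the family $[w_f]$, that $w$ is itself hereditarily natural, hence $w \in {\cal W}$, and that $w \cdot f \sim w_f$ for every $f \in T$; so $[w]$ is an amalgamation, and together with \reflemm{Wbarseparated} this proves $\overline{\cal W}$ is a sheaf.

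I expect the last verification to be the main obstacle. The delicate point is that for distinct $f, f' \in T$ the trees $w_f^i$ and $w_{f'}^{i'}$ are rooted at different objects and are \emph{not} directly $\sim$-comparable; the only information relating them is the compatibility $w_f \cdot g_1 \sim w_{f'} \cdot g_2$ whenever $fg_1 = f'g_2$. Establishing that the various collections $w_f^i(k, y)$ that feed into the definition of $t(h, y)$ are all mutually $\sim$-related — which is precisely what is needed both for $w$ to be natural and for $w \cdot f \sim w_f$ — therefore requires a careful induction descending through the trees, tracking these compatibility witnesses and repeatedly invoking local character to find common covering subsieves. This bookkeeping is what makes W-types in sheaves genuinely harder than in presheaves, and is the source of the oversight corrected in \refrema{errorinMP}.
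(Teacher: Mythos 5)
Your construction matches the paper's essentially exactly: after using collection to replace equivalence classes by honest trees, you glue the labels $x_f$ in the sheaf $X$, take for $S$ the composite sieve $\{fk\}$, and define $t(h,y)$ as the union over all factorisations of $h$ of the corresponding child-collections. This is the right amalgamating tree, and you correctly flag the key difficulty: that for different factorisations $h = fk = f'k'$ the collections $w_f^i(k,y)$ and $w_{f'}^{i'}(k',y)$ must be shown to be mutually $\sim$-related. But you stop there, leaving the hard step as a prediction rather than an argument, and the mechanism you sketch for closing it (a ``careful induction descending through the trees, ... repeatedly invoking local character to find common covering subsieves'') is not in fact what makes the proof go.

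The paper closes the gap without any descending induction, by exploiting \reflemm{Wbarseparated} a second time. It isolates the claim that whenever $h = f_j g = f_{j'} g'$, one has $w_j(g,y) \sim w_{j'}(g',y)$. To prove it, compatibility gives $w_j \cdot g \sim w_{j'} \cdot g'$, hence a covering sieve $T$ on which the children agree; then for each $k \in T$ a short chain of equivalences --- naturality of $w_j$, the restriction rule $(v\cdot g)(k,z) = v(gk,z)$, the compatibility just extracted, and naturality of $w_{j'}$ --- yields $w_j(g,y)\cdot k \sim w_{j'}(g',y)\cdot k$. Since this holds for every $k$ in the cover $T$ and $\overline{\cal W}$ is separated, it follows at once that $w_j(g,y) \sim w_{j'}(g',y)$. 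Both the hereditary naturality of $w$ and the amalgamation property $w \cdot f \sim w_f$ then fall out of this single claim. So the ingredient you are missing is not a transfinite induction over tree height but a reduction to local agreement followed by an appeal to separatedness; this is precisely the ``bookkeeping'' that distinguishes the sheaf case from the presheaf case, and without it your verification remains open.
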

\begin{proof} Let $S$ be a covering sieve on $a$ and suppose we have a compatible family of elements $(\overline{w}_f \in \overline{\cal W})_{f \in S}$. Using the collection axiom, we know that there must be a span
\begin{displaymath}
\begin{array}{ccccc} 
S & \gets &  J  & \to & {\cal W} \\
f_j & \mapsfrom & j & \mapsto & w_j
\end{array}
\end{displaymath}
with $J$ small and $[w_j] = \overline{w}_{f_j}$ for all $j \in J$. Every $w_j$ is of form ${\rm sup}_{(a_j, x_j, R_j)}t_j$. If $f_j = f_{j'}$, then $w_j \sim w_{j'}$, so $x_j = x_{j'}$. Thus the $x_j$ form a compatible family and, since $X$ is a sheaf, can be glued together to obtain an element $x \in X(a)$. We claim that the desired glueing is $[w]$, where $w = {\rm sup}_{(a, x, R)}t \in {\cal V}$ is defined by:
\begin{eqnarray*}
R & = & \{ f_jg \, : \, j \in J, g \in R_j \}, \\
t(h, y) & = & \bigcup_{j \in J} \{t_j(g, y) \, : \, f_jg = h \} 
\end{eqnarray*}
For this to make sense, we first need to show that $w \in {\cal W}$, i.e., that $w$ is hereditarily natural. In order to do this, we prove the following claim.

\noindent
{\bf Claim.} Assume we are given $(h, y) \in Y_x^R$, with $h = f_j g$ for some $j \in J$. Then
\[ w(h, y) \sim w_j(g, y). \]
\begin{proof} Since 
\[ w(h, y) = \bigcup_{j' \in J} \{ w_{j'}(g', y) \, : \, f_{j'}g'  = h \}, \]
it suffices to show that $w_j(g, y) \sim w_{j'}(g', y)$ if $h = f_{j'}g'$.

By compatibility of the family $(\overline{w}_f \in \overline{\cal W})_{f \in S}$ we know that $w_j \cdot g \sim w_{j'} \cdot g' \in {\cal W}(c)$. This means that there is a covering sieve $T \subseteq g^*R_j \cap (g')^*R_{j'}$ such that for all $(k, z) \in Y_{x \cdot h}^T$, we have $(w_j \cdot g)(k, z) \sim (w_{j'} \cdot g')(k, z)$. So if $k: d \to c \in T$, then  
\begin{eqnarray*}
w_j(g, y) \cdot k & \sim & w_j(gk, y \cdot k) \\
& = & (w_j \cdot g)(k, y \cdot k) \\
& \sim & (w_{j'} \cdot g')(k, y \cdot k) \\
& = & w_{j'}(g'k, y \cdot k) \\
& \sim & w_{j'}(g', y) \cdot k.
\end{eqnarray*}
Because $\overline{\cal W}$ is separated (as was shown in \reflemm{Wbarseparated}), it follows that $w_j(g, y) \sim w_{j'}(g', y)$. This proves the claim.
\end{proof}

Any subtree of $w$ is a subtree of some $w_j$ and therefore natural. Hence we only need to prove of $w$ itself that it is composable and natural. Direct inspection shows that the tree that we have constructed is composable. For verifying that $w$ is also natural, let $(h: c \to a, y) \in Y_x^R$ and $k: d \to c$. Since $h \in R$, there are $j \in J$ and $g \in R_j$ such that $h = f_jg$. Then
\[
w(h, y) \cdot k \sim  w_j(g, y) \cdot k \sim w_j(gk, y \cdot k) \sim w(hk, y \cdot k),
\]
by using naturality of $w_j$ and the claim (twice).

It remains to show that $[w]$ is a glueing of all the $\overline{w}_f$, i.e., that $w \cdot f_j \sim w_j$ for all $j \in J$. So let $j \in J$. First of all, $x \cdot f_j = x_j$, by construction. Secondly, for every $g: c \to b \in R_j = (R_j \cap f_j^*R)$ and $y \in Y(c)$ such that $F(y) = x \cdot f_jg$, we have
\[ (w \cdot f_j)(g, y) = w(f_jg, y) \sim w_j(g, y). \]
This completes the proof.
\end{proof}

\begin{lemm}{Wbaralgebra}
$\overline{\cal W}$ is a $P_F$-algebra.
\end{lemm}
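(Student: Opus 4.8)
The task is to equip $\overline{\cal W}$ with a structure map ${\rm sup}_F\colon P_F(\overline{\cal W})\to\overline{\cal W}$ in $\shct{\ct E}{\ct C}$. Since $\Pi$-types, and hence polynomial functors, in sheaves are computed as in presheaves (cf.\ the proof of \reftheo{manythingsstableinsh} and \reftheo{Winpresh}), a generalized element of $P_F(\overline{\cal W})$ over $a\in\ct C_0$ is a pair $(x,t)$ consisting of $x\in X(a)$ and a morphism of presheaves $t\colon Y_x^{M_a}\to\overline{\cal W}$, where $M_a$ is the maximal sieve on $a$; note $M_a$ is covering by the maximality axiom, so that $(a,x,M_a)$ is a legitimate label for a tree in ${\cal V}$. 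The plan is to set ${\rm sup}_F(x,t):=[\,{\rm sup}_{(a,x,M_a)}\widehat t\,]$ for a suitable $\widehat t\colon Y_x^{M_a}\to\inhspower{\cal V}$ lifting $t$, and then check this is well defined, natural, and lands in ${\cal W}$.

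To produce $\widehat t$, write $q\colon{\cal W}\to\overline{\cal W}$ for the (pointwise surjective) quotient map and pull $q$ back along $t$, obtaining a cover $P\to Y_x^{M_a}$ together with $\widetilde t\colon P\to{\cal W}$ with $q\widetilde t=t\circ(P\to Y_x^{M_a})$. Since $Y_x^{M_a}$ is small (the codomain map being small) and $\sim$ is bounded, $P$ may be taken small, using the collection axiom exactly as in the construction of $\overline{\cal W}$; one then defines $\widehat t(f,y)$ to be the image under $\widetilde t$ of the fibre of $P$ over $(f,y)$ — a subobject of ${\cal V}$ that is small by {\bf (A6)} and inhabited since $P\to Y_x^{M_a}$ is a cover, hence an element of $\inhspower{\cal V}$. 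One now verifies that ${\rm sup}_{(a,x,M_a)}\widehat t$ is hereditarily natural, hence lies in ${\cal W}(a)$: its proper subtrees are (subtrees of) trees in the image of $\widetilde t\subseteq{\cal W}$, so hereditarily natural; it is composable because $\widetilde t$ is a morphism of presheaves into ${\cal W}$, whose value at $b$ consists of trees rooted at $b$; and it is natural because $t$ is a morphism of presheaves, the identity $t(f,y)\cdot g=t(fg,y\cdot g)$ in $\overline{\cal W}$ translating precisely into the defining relation $\widehat t(f,y)\cdot g\sim\widehat t(fg,y\cdot g)$.

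It remains to see that ${\rm sup}_F$ is well defined and natural. For independence of the chosen lift $\widetilde t$: two lifts yield trees with the same root label $(a,x,M_a)$, and for each $(f,y)$ the two associated small subobjects of ${\cal V}$ both consist of trees representing $t(f,y)$, hence are related by $\sim$ by transitivity; so the two trees are $\sim$-equivalent by the very definition of $\sim$ (take the covering sieve $R=M_a$), and ${\rm sup}_F(x,t)$ is well defined. Compatibility with restriction, ${\rm sup}_F(x,t)\cdot f={\rm sup}_F(x\cdot f,f^*t)$, follows from the definition of the presheaf structure on ${\cal V}$ — which gives $({\rm sup}_{(a,x,M_a)}\widehat t)\cdot f={\rm sup}_{(b,x\cdot f,M_b)}(f^*\widehat t)$ since $f^*M_a=M_b$ — together with the observation that $f^*\widehat t$ is a legitimate lift of $f^*t$ and the independence just proved. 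Finally, since $P_F(\overline{\cal W})$ and $\overline{\cal W}$ are both sheaves, a morphism of presheaves between them is a morphism of sheaves, so ${\rm sup}_F$ is the required $P_F$-algebra structure.

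I expect the main obstacle to be the interplay of smallness and well-definedness in the second and third paragraphs: one must ensure that $\widehat t$ genuinely takes values in $\inhspower{\cal V}$ — this is where the smallness of $Y_x^{M_a}$ and the boundedness of $\sim$ are essential, and why $\inhspower$ rather than $\spower$ is the natural target — and that passing to $\sim$-classes erases the choices made when lifting $t$ through $q$, which requires carefully unwinding the recursive definition of $\sim$ and using its established symmetry and transitivity. The remaining checks (composability, naturality, compatibility with restriction) are routine once the bookkeeping with the labels $(a,x,M_a)$ and the operation $f^*(-)$ on trees is set up as in \reftheo{Winpresh}.
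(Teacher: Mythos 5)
Your construction coincides with the paper's: both define the algebra map by, given $(x,t)$ with $t\colon Y_x^{M_a}\to\overline{\cal W}$, using the collection axiom to choose a small-valued lift $\widehat t\colon Y_x^{M_a}\to\inhspower{\cal W}$ of $t$ through the quotient $q\colon{\cal W}\to\overline{\cal W}$, setting the value to $[\,{\rm sup}_{(a,x,M_a)}\widehat t\,]$, and then checking hereditary naturality, independence of the lift, and naturality of the resulting map. The paper states these verifications are routine and leaves them out; you fill them in, correctly, so the proposal is right and follows the same route.
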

\begin{proof} We have to describe a natural transformation $S: P_F \overline{\cal W} \to \overline{\cal W}$. An element of $P_F \overline{\cal W}(a)$ is a pair $(x, t)$ consisting of an element $x \in X(a)$ together with a natural transformation $G: Y_x^{M_a} \to \overline{\cal W}$. Using collection, there is a map
\diaglab{spanforWbar}{ Y^{M_a}_x \ar[r]^(.45)t & \inhspower {\cal W} }
such that $[w] = G(y, f)$, for all $(f, y) \in Y_x^{M_a}$ and $w \in t(f, y)$. We define $S_xG$ to be 
\[  [ \, {\rm sup}_{(a, x, M_a)}t \, ]. \]
One now needs to check that $w$ is hereditarily natural. And then another verification is needed to check that $[w]$ does not depend on the choice of the map in \refdiag{spanforWbar}. Finally, one needs to check the naturality of $S$. These verifications are all relatively straightforward and similar to some of the earlier calculations, and therefore we leave all of them to the reader.
\end{proof}

\begin{lemm}{Wbarinitial}
$\overline{\cal W}$ is the initial $P_F$-algebra.
\end{lemm}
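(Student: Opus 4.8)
The plan is to establish initiality in the two usual steps --- existence and uniqueness of a morphism of $P_F$-algebras out of $\overline{\cal W}$ --- with essentially all the work concentrated in the existence part.

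\emph{Existence.} Let $(Z,\zeta)$ be an arbitrary $P_F$-algebra in $\shct{\ct{E}}{\ct{C}}$. First I would define a map $\phi\colon{\cal W}\to Z$ over $\ct{C}_0$ by well-founded recursion on the trees of ${\cal V}$; this recursion is internally legitimate by the machinery behind \reftheo{initialalginpredcatofsmallmaps} (using that the object of subtrees of a given tree is small, cf.\ \cite[Theorem 6.13]{bergmoerdijk08}), and it stays inside ${\cal W}$ because ${\cal W}$ is closed under subtrees. At a tree $v={\rm sup}_{(a,x,S)}t$ the construction uses crucially that $Z$ is a \emph{sheaf}: for every $(f\colon b\to a,\,y)\in Y^{M_a}_x$ the sieve $f^*S$ covers $b$, the subtree-collections $v(fg,y\cdot g)$ for $g\in f^*S$ lie in ${\cal W}$ and (since $v$ is natural) consist of pairwise $\sim$-equivalent trees, so that the common values of $\phi$ on these collections glue, along $f^*S$, into a single element of $Z(b)$; these glueings assemble into a morphism $h_v\colon Y^{M_a}_x\to Z$, and I set $\phi(v):=\zeta_a(x,h_v)$. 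A simultaneous transfinite induction shows that $\phi$ is well defined and $\sim$-invariant --- here separatedness of $Z$, together with the fact that $Y^R_x\hookrightarrow Y^{M_a}_x$ is locally surjective whenever $R$ covers $a$, is what forces the transformations $h_v$ and $h_{v'}$ arising from $\sim$-related trees to coincide. Hence $\phi$ descends along the quotient ${\cal W}\to\overline{\cal W}$ to a morphism of sheaves $\overline{\phi}\colon\overline{\cal W}\to Z$, and unwinding the definition of the algebra map $S$ of \reflemm{Wbaralgebra} one checks $\overline{\phi}\circ S=\zeta\circ P_F\overline{\phi}$, so that $\overline{\phi}$ is a morphism of $P_F$-algebras.

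\emph{Uniqueness.} Given any morphism of $P_F$-algebras $u\colon\overline{\cal W}\to Z$, I would prove $u([v])=\overline{\phi}([v])$ for all $v\in{\cal W}$ by transfinite induction on $v$. The one auxiliary fact needed, which can be read off from the proofs of \reflemm{Wbarsheaf} and \reflemm{Wbaralgebra}, is that for $v={\rm sup}_{(a,x,S)}t$ one has $[v]=S_x(G_v)$, where $G_v\colon Y^{M_a}_x\to\overline{\cal W}$ sends $(f,y)$ to the glueing over $f^*S$ of the classes $[w]$ of the immediate subtrees $w$ of $v$ sitting on the edges $(fg,y\cdot g)$. Granting this, the induction step is immediate: $u$ commutes with glueing (being a sheaf morphism) and with $S$ (being an algebra morphism), so $u([v])=\zeta_a(x,\,u\circ G_v)$, while the induction hypothesis gives $u\circ G_v=h_v$, whence $u([v])=\zeta_a(x,h_v)=\overline{\phi}([v])$. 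This yields $u=\overline{\phi}$.

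I expect the main obstacle to be the existence step, and within it the entanglement of the recursion, the quotient by $\sim$, and the sheaf glueing: one must verify, with the boundedness and definability caveats of \cite{berg05,bergmoerdijk08}, that the recursion can be carried out, that the well-definedness of each $h_v$ --- and hence of $\phi(v)$ --- genuinely follows from the $\sim$-invariance of $\phi$ lower down the trees, and that the ad hoc glued transformations $h_v$ match the algebra structure $S$ produced in \reflemm{Wbaralgebra}. Once this is in place --- in particular once the identity $[v]=S_x(G_v)$ is available --- the uniqueness argument is routine.
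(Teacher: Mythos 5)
Your proposal is correct in outline, but it takes a genuinely different route from the paper. The paper does \emph{not} directly construct the unique algebra morphism. Instead it verifies two structural facts about $\overline{\cal W}$ --- that the algebra map $S\colon P_F\overline{\cal W}\to\overline{\cal W}$ is monic, and that $\overline{\cal W}$ has no proper $P_F$-subalgebras --- and then appeals to a previously established criterion (Theorem 26 of \cite{berg05}, or equivalently \cite[Theorem 6.13]{bergmoerdijk08}) asserting that these two conditions together characterize the W-type in a predicative category with small maps. The ``no proper subalgebras'' clause is then proved by a straightforward induction over the auxiliary initial algebra ${\cal V}$, with no new recursion principle needed. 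Your proof, by contrast, attempts the classical two-step existence-plus-uniqueness directly: you build $\phi\colon{\cal W}\to Z$ by well-founded recursion, glueing along $f^*S$ into the sheaf $Z$, then check $\sim$-invariance and descend to $\overline{\phi}$. That construction is essentially right (you correctly identify that for $f\colon b\to a$ not in $S$ one must pass to $f^*S$ and use sheafness of $Z$ to glue, and that naturality of $v$ yields the requisite compatibilities), but it front-loads exactly the difficulty you yourself flag at the end: one must justify that a recursion producing a morphism into an arbitrary sheaf $Z$, simultaneously with the proof of $\sim$-invariance, is internally legitimate and bounded. In the predicative setting this is precisely the labour that the ``monic $+$ no proper subalgebras'' criterion was designed to encapsulate once and for all, which is why the paper's route is shorter: it trades an explicit (and somewhat delicate) recursion for two concrete verifications on $\overline{\cal W}$ itself. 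If you wanted to make your argument airtight you would, in effect, have to re-prove a version of that cited theorem; granting it, your sketch and the paper's argument converge on the same underlying mathematics, with yours giving a more explicit picture of what the induced algebra morphism looks like.
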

\begin{proof}
We will show that $S: P_F\overline{\cal W} \to \overline{\cal W}$ is monic and that $\overline{\cal W}$ has no proper $P_F$-subalgebras; it will then follow from Theorem 26 of \cite{berg05} (or Theorem 6.13 in \cite{bergmoerdijk08}) that $\overline{\cal W}$ is the W-type of $F$.

We first show that $S$ is monic. So let $(x, G), (x', G') \in P_FX(a)$ be such that $S_x G = S_{x'} G' \in \overline{\cal W}$. It follows that $x = x'$ and that there is a covering sieve $S$ on $a$ such that for all $(h, y) \in Y_x^S$, we have $G(h, y) = G'(h,  y)$. We need to show that $G = G'$, so let $(f, y) \in Y^{M_a}_x$ be arbitrary. For every $g \in f^*S$, we have:
\[ G(f, y) \cdot g  =  G(fg, y \cdot g) = G'(fg, y \cdot g) =  G'(f, y) \cdot g. \]
Since $f^*S$ is covering, it follows that $G(f, y) = G'(f, y)$, as desired.

The fact that $\overline{\cal W}$ has no proper $P_F$-subalgebras is a consequence of the inductive properties of ${\cal V}$ (recall that ${\cal V}$ is an initial algebra). Let ${\cal A}$ be a sheaf and $P_F$-subalgebra of $\overline{\cal W}$. We claim that 
\begin{eqnarray*}
 {\cal B} & = & \{ v \in {\cal V} \, : \, \mbox{if } v \mbox{ is hereditarily natural, then } [v] \in {\cal A} \}
\end{eqnarray*}
is a subalgebra of ${\cal V}$. Proof: Suppose $v$ is a tree that is hereditarily natural. Assume moreover that $v = {\rm sup}_{(a, x, S)}t$ and for all $(f, y) \in Y_x^S$ and $w \in t(f, y)$, we know that $[w] \in {\cal A}$. Our aim is to show that $[v] \in {\cal A}$.

For the moment fix an element $f: b \to a \in S$. Since $v \cdot f$ has a root labelled by $(b, x \cdot f, M_b)$ and $(v \cdot f)(g, y) = v(fg, y)$ for all $(g, y) \in Y^{M_b}_{x \cdot f}$, we have that $[v] \cdot f  = S_{x \cdot f} G$, where $G(g, y) = [v(fg, y)] \in {\cal A}$. Because ${\cal A}$ is a $P_F$-subalgebra of $\overline{\cal W}$ this implies that $[v] \cdot f \in {\cal A}$. Since this holds for every $f \in S$, while $S$ is a covering sieve and ${\cal A}$ is a subsheaf of $\overline{\cal W}$, we obtain that $[v] \in {\cal A}$, as desired.

We conclude that ${\cal B} = {\cal V}$ and hence ${\cal A} = \overline{\cal W}$. This completes the proof.
\end{proof}

To wrap up:
\begin{theo}{Wtypesstableundersheaves}
The axiom {\bf (WE)} is inherited by sheaf models.
\end{theo}

We believe that one has to make additional assumptions on ones predicative category with small maps $(\ct{E}, \smallmap{S})$ to show that the axiom {\bf (WS)} is inherited by sheaf models (the argument above does not establish this, the problem being that the initial algebra ${\cal V}$ will be large, even when the codomain of the map $F: Y \to X$ we have computed the W-type of is small). We will now show that this problem can be circumvented if we assume that the axiom of multiple choice {\bf (AMC)} holds in \ct{E}. It is quite likely that one can also solve this problem by using Aczel's Regular Extension Axiom: it implies the axiom {\bf (WS)} and is claimed to be stable under sheaf extensions (but, as far as we are aware, no proof of that claim has been published). 

\begin{theo}{stabilityofAMCundersheaves}
The axiom {\bf (AMC)} is inherited by sheaf models.
\end{theo}
\begin{proof}
This was proved in Section 10 of \cite{moerdijkpalmgren02}.
\end{proof}

\begin{theo}{stabilityofWSundersheaves}
(Assuming that {\bf (AMC)} holds in \ct{E}.) The axiom {\bf (WS)} is inherited by sheaf models.
\end{theo}
\begin{proof} We will continue to use the notation from the proof of the previous theorem. So, again, we assume we have a small map $F: Y \to X$ of sheaves. Moreover, we let $\psi$ be the map in \ct{E} and $\Psi$ be the endofunctor on \ct{E} defined above, we let ${\cal V}$ be its initial algebra and $\sim$ be the symmetric and transitive relation we defined on ${\cal V}$, and $\overline{\cal W}$ the W-type associated to $F$, obtained by quotienting the hereditarily natural elements in ${\cal V}$ by $\sim$.

Assume that $X$ is a small sheaf. Since {\bf (AMC)} holds in \ct{E}, it is the case that, internally in $\ct{E}/\ct{C}_0$, the map $\psi$ fits into a covering square as shown
\diag{ D \ar[r]^(.35)q \ar[d]_g & \sum_{(S, x)} Y^S_x \ar[d]^{\psi} \\
C \ar[r]_(.35)p & X \times_{\ct{C}_0} {\rm Cov}, }
in which all objects and maps are small in $\ct{E}/\ct{C}_0$ and $(g, q)$ is a collection span over $X \times_{\ct{C}_0} {\rm Cov}$. The W-type ${\cal U} = W_g$ in $\ct{E}/\ct{C}_0$ is small in $\ct{E}/\ct{C}_0$, because we are assuming that {\bf (WS)} holds in $\ct{E}$ (and hence also in $\ct{E}/\ct{C}_0$). The idea is to use this to show that $\overline{\cal W}$ is small as well.

Every element $u = {\rm sup}_c s \in {\cal U}$ determines an element in $\varphi(u) \in {\cal V}$ as follows: first compute $p(c) = (a, x, S)$. Then let for every $(y, f) \in Y^S_x$ the element $t(y, f)$ be defined by
\[ t(y, f) =  \{ (\varphi\circ s)(d) \, : \, d \in q_c^{-1}(y, f) \}. \]
Then $\varphi(u) = {\rm sup}_{(a, x, S)} t$  (so this is an inductive definition). We claim that for every hereditarily natural tree $v \in {\cal W}$ there is an element $u \in {\cal U}$ such that $v \sim \varphi(u)$. The desired result follows readily from this claim.

We prove the claim by induction: so let $v = {\sup}_{(a, x, S)} t$ be a hereditarily natural element of ${\cal V}$ and assume the claim holds for all subtrees of $v$. Since all subtrees of $v$ are hereditarily natural as well, this means that for every $(y, f) \in Y^S_x$ and $w \in t(y, f)$ there is an element $u \in {\cal U}$ such that $\varphi(u) = w$. From the fact that $(g, q)$ is a collection span over $X \times_{\ct{C}_0} {\rm Cov}$,  it follows that there is a $c \in C$ with $p(c) = (a, x, S)$ together with two functions: first one picking for every $d \in D_c$ an element $r(d) \in t(y, f)$ (because $t(y, f)$ is non-empty) and a second one picking for every $d \in D$ an element $s(d) \in {\cal U}$ such that $\varphi(s(d)) \sim r(d)$. It is not hard to see that $v \sim \varphi({\rm sup}_c s)$, using that $v$ is natural and therefore all elements in $t(y, f)$ are equivalent to each other.
\end{proof}

This completes the proof of our main result, \reftheo{maintheorem}.

\section{Sheaf models of constructive set theory}

Our main result \reftheo{maintheorem} in combination with \reftheo{existenceofinitZFalgebra} yields the existence of sheaf models for {\bf CZF} and {\bf IZF} (see \refcoro{maincorollary}). For the sake of completeness and in order to allow a comparison with classical forcing, we describe this model in concrete terms. We will not present verifications of the correctness of our descriptions, because they could in principle be obtained by unwinding the existence proofs, and other descriptions which differ only slightly from what we present here can already be found in the literature. 

To construct the initial $\spower$-algebra in a category of internal presheaves over a predicative category with small maps $(\ct{E}, \smallmap{S})$, let ${\cal W}$ be the initial algebra  of the endofunctor $\Phi = P_{\rm cod} \circ \spower$ on \ct{E} (see \reftheo{initialalginpredcatofsmallmaps}). Elements of $w \in {\cal W}$ are therefore of the form ${\rm sup}_c t$, with $c \in \ct{C}_0$ and $t$ a function from $\{ f \in \ct{C}_1 \, : \, {\rm cod}(f) = c \}$ to $\spower {\cal W}$. We think of such an element $w$ as a well-founded tree, where the root is labelled with $c$ and for every $v \in t(f)$, the tree $v$ is connected to the root of $w$ with an edge labelled with $f$. The object ${\cal W}$ carries the structure of a presheaf, with ${\cal W}(c)$ consisting of trees whose root is labelled with $c$, and with a restriction operation defined by putting for any $w = {\rm sup}_c t$ and $f: d \to c$,
\[ w \cdot f = {\rm sup}_d \, t(f \circ -). \]

The initial $\spower$-algebra ${\cal V}$ in the category of presheaves is constructed from ${\cal W}$ by selecting those trees that are hereditarily composable and natural:
\begin{itemize}
\item A tree $w = {\rm sup}_c (t) \in {\cal W}$ is \emph{composable}, if for any $f: d \to c$ and $v \in  t(f)$,  the tree $v$ has a root labelled with $d$.
\item A tree $w = {\rm sup}_c(t) \in {\cal W}$ is \emph{natural}, if it is composable and for any $f: d \to c$, $g: e \to d$ and $v \in t(f)$, we have $v \cdot g \in t(fg)$.
\end{itemize}
The \spower-algebra structure, or, equivalently, the membership relation on ${\cal V}$, is given by the formula ($x, {\rm sup}_c t \in {\cal V}$)
\[ x \in {\rm sup}_c t \Longleftrightarrow x \in t(\id_c). \]
The easiest way to prove the correctness of the description we gave is by appealing to Theorem 1.1 from \cite{kouwenhovenvanoosten05} (or Theorem 7.3 from \cite{bergmoerdijk08}). This model was first presented in the paper \cite{gambino05} by Gambino, based on unpublished work by Dana Scott.

The initial \spower-algebra in categories of internal sheaves is obtained as a quotient of this object ${\cal V}$. Roughly speaking, we quotient by bisimulation in a way which reflects the semantics of a category of sheaves. More precisely, we take ${\cal V}$ as defined above and we write: ${\rm sup}_c t \sim {\rm sup}_c t'$ if for all $f: d \to c$ and $v \in t(f)$, the sieve
\[ \{ \, g: e \to d \, : \, \exists v' \in t'(fg) \, ( \, v \cdot g \sim v' \, ) \, \} \]
covers $d$ and for all $f': d \to c$ and $v' \in t'(f')$, the sieve
\[  \{ \, g: e \to d \, : \, \exists v \in t(f'g) \, ( \, v' \cdot g \sim v \, ) \, \} \]
covers $d$. On the quotient the membership relation is defined by:
\[ [v] \in [{\rm sup}_c t] \Longleftrightarrow \mbox{the sieve } \{ f: d \to c \, : \, \exists v' \in t(f) \, ( \, v \cdot g \sim v' \, ) \}  \mbox{ covers } c. \] 
To see that this is correct, one should verify that $\sim$ defines a bounded equivalence relation and the quotient is a sheaf. Then one proves that it is the initial \spower-algebra by appealing to Theorem 1.1 from \cite{kouwenhovenvanoosten05} (or Theorem 7.3 from \cite{bergmoerdijk08}). The reader who wishes to see more details, should consult \cite{streicher09}.

\begin{rema}{classicalforcing}
To see the analogy with classical forcing (as in \cite{kunen80}, for example), note that any poset $\mathbb{P}$ determines a site, by declaring that $S$ covers $p$ whenever $S$ is dense below $p$. In this case, the elements of $\cal{V}$ are a particular kind of \emph{names} (as they are traditionally called). One could regard composability and naturality as saturation properties of names (so that, in effect, we only consider nice, saturated names). It is not too hard to show that every name (in the usual sense) is equal in a forcing model to such a saturated name, so that in the case of classical {\bf ZF} the models that we have constructed are not different from standard forcing models.
\end{rema}

\bibliographystyle{plain} \bibliography{ast}

\end{document}